\documentclass[11pt]{amsart}
\usepackage{amssymb}

\newtheorem{theorem}{Theorem}[section]
\newtheorem{proposition}[theorem]{Proposition}
\newtheorem{lemma}[theorem]{Lemma}
\newtheorem{corollary}[theorem]{Corollary}
\newtheorem{conjecture}[theorem]{Conjecture}
\newtheorem{hypothesis}[theorem]{Hypothesis}

\theoremstyle{definition}
\newtheorem{definition}[theorem]{Definition}
\newtheorem{example}[theorem]{Example}

\theoremstyle{remark}
\newtheorem{remark}[theorem]{Remark}
\newtheorem*{remarknonum}{Remark}

\newcommand{\br}{\operatorname{br}}
\newcommand{\dimH}{\dim_{\mathrm H}}
\newcommand{\dTV}{d_{\mathrm{TV}}}
\newcommand{\E}{\mathbb{E}}
\newcommand{\Prob}{\mathbb{P}}
\newcommand{\R}{\mathbb{R}}
\newcommand{\one}{\mathbf{1}}
\newcommand{\UGW}{\mathrm{UGW}}
\newcommand{\ex}{\mathrm{ex}}
\DeclareMathOperator{\Var}{Var}
\DeclareMathOperator{\Cov}{Cov}
\DeclareMathOperator{\supp}{supp}
\DeclareMathOperator{\diag}{diag}

\begin{document}

\title[Threshold cascades of interacting diffusions on unimodular trees]{Threshold cascades of interacting diffusions on unimodular random trees: a conditional branching-process universality theorem, the annealed pressure, and the dimension of the failed boundary}
\author{Achyut Kumar}
\address{Independent Researcher}
\email{achyutbusiness86@gmail.com}
\date{Working manuscript, August 2026}
\subjclass[2020]{60J80, 60K37, 60F10, 60J85, 82C20, 47B65}
\keywords{Interacting diffusions, unimodular random trees, local weak convergence, multitype branching processes, total progeny, Otter--Dwass formula, Perron--Frobenius, harmonic measure, pressure function, Hausdorff dimension, continuum random tree, next-generation operator}

\begin{abstract}
We study systems of coupled Ornstein--Uhlenbeck diffusions with an absorbing (failure) state, indexed by the vertices of a sequence of finite graphs $G_n$ that converge locally weakly (Benjamini--Schramm) to a unimodular Galton--Watson tree $T\sim\UGW(g)$. A single failure can trigger a cascade of threshold crossings, and the central question is whether that cascade is subcritical (a.s.\ finite) or supercritical, and what its cluster-size law and cluster geometry look like at and near criticality.

Our main structural result is a reduction: under an explicitly stated front-decoupling hypothesis (Hypothesis~\ref{hyp:red}, the saturated-coupling regime), which we verify exactly on the Bethe lattice and prove in part in general, the failed cluster of the root converges in law to the genealogical tree of a finite-type ($|D|$-type) Galton--Watson process whose mean matrix $M$ is computed from a single (matrix-valued) many-to-one formula. Conditional on this reduction we prove: (A) criticality occurs exactly at $\rho(M)=1$, where $\rho(M)$ is the annealed Perron root, and the harmonic-measure-dimension expression one is tempted to write for $\rho$ equals the quenched (geometric-mean) growth rate, which is $\le\rho(M)$ with equality iff the boundary weights are a.s.\ constant; (B) the quenched and annealed rates are respectively the tangent at $0$ and the value at $1$ of a single convex, real-analytic annealed pressure $\varphi(\beta)=\log\rho(M^{(\beta)})$ built from tilted mean matrices, so that the quenched/annealed gap is a chord--tangent inequality, and --- extending the reduction beyond boundary-blind observables --- in the supercritical regime the end boundary $\partial S$ of the failed cluster has, a.s.\ on survival, Hausdorff dimension exactly $\log\rho(M)$, of strictly positive codimension in $\partial T$ unless transmission is perfect; (C) at criticality the total progeny obeys $\Prob(|S|=n)\sim Cn^{-3/2}$ with the universal mean-field exponent via the Otter--Dwass formula and a local CLT, and, under an additional moment hypothesis, the conditioned cluster rescaled by $n^{-1/2}$ converges in the Gromov--Hausdorff--Prokhorov sense to Aldous' Continuum Random Tree, upgrading the universal exponent to a universal shape; (D) the subcritical domain is open and contractible (but not convex; the supercritical domain is the convex object), the resolvent $(I-M)^{-1}$ is a bounded nonnegative matrix, and the spectral gap $1-\rho(M)$ controls quantitative stability.

A secondary aim is cautionary. Several statements that are natural to write down in this setting are false, and we prove that they are false: (i) the infinite-dimensional Hilbert--Schmidt / Krein--Rutman programme for an operator $K_\infty$ on $L^2(\partial T,\nu_h)$ is both unnecessary for (A)--(D) and, for the energy reason we make precise, generically impossible; (ii) the small-noise first-passage probability carries an algebraic prefactor $\Lambda^{-1/2}$ rather than a multiplicative $(1+o(1))$; (iii) the scalar many-to-one identity fails because consecutive edge weights along a ray share a vertex type --- the correct identity is a matrix product whose growth rate is $\rho(M)$; and (iv) the spectral-radius/dimension identity and the convexity of the subcritical domain both fail, in the directions we identify.

The load-bearing input is the front-decoupling hypothesis itself. We prove it unconditionally in an explicit dissipative regime: when the dissipation ratio $\kappa=\gamma_{\max}L_\sigma\Delta_{\max}/\alpha_{\min}$ is below $1$, a pathwise front-tracking argument (synchronous coupling, a cone-of-influence estimate showing that back-reaction ``echoes'' decay geometrically in graph distance, and an exploration telescoping in which the relevant volume is the failed cluster rather than the ambient tree) shows that echo decay strictly beats volume growth, yielding a quantitative total-variation bound and making the main theorems unconditional on an open parameter region. The naive Girsanov change of measure provably cannot do this: the back-reaction drift is $O(1)$ and the Radon--Nikodym density is not tight. Outside the dissipative and saturated regimes the reduction remains open --- we state the precise conjecture --- and outside saturation the natural object is a continuous-type branching process, for which we nevertheless prove finite-dimensional two-sided criticality certificates by a kernel-sandwich argument.
\end{abstract}

\maketitle
\tableofcontents

\section*{Honest status of this manuscript}

This is a partially conditional theorem. Everything in Sections \ref{sec:spectral}--\ref{sec:bethe} is proved in full, but the proofs take as input Hypothesis~\ref{hyp:red}, the reduction of the interacting SDE to a finite-type branching process. That reduction is established here (a) exactly on the Bethe lattice (Section~\ref{sec:bethe}); (b) partially in general (Proposition~\ref{prop:partial}: single-cell correctness, negligibility of back-reaction in the saturated regime, and vanishing of short-cycle corrections under local weak convergence); and (c) unconditionally in the dissipative regime $\kappa<1$ (Theorem~\ref{thm:front}), via a pathwise front-tracking argument in which the geometric decay of back-reaction echoes beats the volume growth of the cluster. In the strong-coupling regime $\kappa\ge1$ the reduction remains open and is stated as Conjecture~\ref{conj:front}, which is, in the author's assessment, the substantive open problem.

Six points are flagged for referee attention, in addition to Conjecture~\ref{conj:front}: the moment hypothesis in Theorem~\ref{thm:crt} (Miermont's invariance principle is invoked under exponential moments; the expected relaxation to finite variance is discussed in Remark~\ref{rem:moment} and Appendix~\ref{app:miermont}); the type-dependent second-moment bookkeeping in the energy estimate of Theorem~\ref{thm:dimS}; the compactness/truncation hypothesis in Proposition~\ref{prop:sandwich}; the routine-but-tedious weighted Gronwall details in Lemma~\ref{lem:cone} (stated at proof-sketch precision, including the $\kappa'\downarrow\kappa$ step); the cluster-shell counting bound in the proof of Theorem~\ref{thm:front}; and the non-vacuousness of the dissipative critical window, Remark~\ref{rem:nonvac} (in particular the negligibility of spontaneous crossings at moderate noise). A reader evaluating this work for a venue should read it as: a complete and rigorous derivation of mean-field universality, boundary geometry, and spectral stability from a clearly-isolated probabilistic reduction; an unconditional proof of that reduction in an explicit open parameter regime; and precise identifications of several natural but false claims in this setting. The unconditional statements cover a nonempty open region of parameter space; the full saturated regime remains conjectural.

\section{Introduction}\label{sec:intro}

\subsection{The architecture}
Let $(G_n)$ be finite connected graphs with uniformly bounded average degree converging in the Benjamini--Schramm local weak sense \cite{BS01,AS04} to a unimodular random rooted tree $(T,o)$; we take $T\sim\UGW(g)$, the unimodular Galton--Watson tree with offspring generating function $g$, finite mean $\mu=g'(1)>1$ and finite variance $\sigma_g^2=g''(1)+g'(1)-g'(1)^2<\infty$. On each $G_n$ we place a system of interacting diffusions (Definition~\ref{def:system}); a vertex may cross a threshold and enter an absorbing failed state, and a failed vertex stresses its neighbours, which may then fail in turn. The cascade initiated by failing the root has a random cluster $S\subset V(T)$ of eventually-failed vertices.

The governing heuristic is the next-generation operator of mathematical epidemiology \cite{vdDW02}: on a finite graph the matrix $K_n$ whose $(u,v)$ entry is the expected number of secondary failures at $v$ caused by $u$ has a spectral radius $\rho(K_n)$ that separates subcritical from supercritical behaviour. A natural programme suggested by this heuristic is to pass to the limit and replace $K_n$ by a bounded integral operator $K_\infty$ on the Hilbert bundle $\mathcal H\cong L^2(\partial T,\nu_h)\otimes\R^m$ over the Martin boundary of $T$, and then to run an infinite-dimensional Perron--Frobenius (Krein--Rutman) theory. Part of the thesis of this paper is that this programme is both unnecessary for the central questions and, generically, impossible (Proposition~\ref{prop:HS}).

\subsection{The thesis of this paper}
The cluster size $|S|$, the survival event, and the subcritical stability are all \emph{boundary-blind}: they do not depend on where on $\partial T$ the failed vertices lie. This single observation reorganises everything. Conditional on the reduction (Hypothesis~\ref{hyp:red}), the failed cluster is the family tree of a Galton--Watson process whose type is only the hazard domain $d\in D$, $|D|=m<\infty$; the Martin boundary integrates out of every quantity that the main theorems concern. Consequently:
\begin{itemize}
\item The relevant spectral object is the finite $m\times m$ annealed mean matrix $M$, not an operator on $L^2(\partial T)$. Its Perron root is computed by one (matrix-valued) many-to-one identity (Theorem~\ref{thm:reduction}, \eqref{eq:M}, and Proposition~\ref{prop:m2o}).
\item Simplicity of the leading eigenvalue, monotonicity, real-analyticity, and the operator van den Driessche--Watmough lift --- the delicate-looking spectral issues --- become elementary finite-dimensional Perron--Frobenius facts (Section~\ref{sec:PF}). The Hilbert--Schmidt property of $K_\infty$ is never used.
\item The $n^{-3/2}$ law follows from the Otter--Dwass formula and a one-dimensional local CLT (Section~\ref{sec:univ}); no general-state-space Kesten--Stigum theory is needed for the size law. Under an additional moment hypothesis the full shape of the critical cluster is universal: the Continuum Random Tree (Theorem~\ref{thm:crt}).
\item The finite-dimensional collapse extends one step past boundary-blindness. The tilted matrices $M^{(\beta)}$ and their pressure $\varphi(\beta)=\log\rho(M^{(\beta)})$ organise the quenched and annealed rates as the tangent at $0$ and the value at $1$ of a single convex analytic function (Theorem~\ref{thm:pressure}), and the Hausdorff dimension of the failed boundary $\partial S$ --- a boundary-resolved quantity --- equals $\log\rho(M)$ on survival (Theorem~\ref{thm:dimS}). The operator $K_\infty$ is needed only for boundary fluctuations, not for boundary geometry at the level of dimension.
\item The reduction itself is a theorem, not merely a hypothesis, in the dissipative regime: when dissipation dominates total coupling ($\kappa<1$), a pathwise cone-of-influence estimate shows that back-reaction echoes decay geometrically in graph distance while their sources live only on the failed cluster, whose volume is spectrally controlled; the resulting exploration telescoping proves Hypothesis~\ref{hyp:red} with a quantitative rate (Theorem~\ref{thm:front}).
\end{itemize}

\subsection{Warnings: natural claims that are false}
Five statements that are natural to formulate in this setting are false. Identifying them precisely, and proving the true statements that replace them, is part of the content of this paper.
\begin{itemize}
\item[(F1)] \emph{The spectral-radius/dimension identity is false in general.} Writing $\rho=p_{\min}^{-\dimH}$ equates $\rho$ with the harmonic-measure dimension expression, which is the \emph{quenched}, geometric-mean growth rate $e^\ell$. The criticality of the cascade is governed by the \emph{annealed} branching rate $\bar m=\rho(M)$. One has $\bar m\ge e^\ell$, with equality iff the boundary weights are $\nu_h$-a.s.\ constant (Theorem~\ref{thm:gap}); the sharpest form is the chord--tangent inequality of the pressure (Theorem~\ref{thm:pressure}). The identity holds on the Bethe lattice precisely because the boundary there is homogeneous; off it, $p_{\min}^{-\dimH}$ undercounts criticality.
\item[(F2)] \emph{The subcritical domain is not convex; convexity belongs to the supercritical domain.} One might try to deduce convexity of $\mathcal M_-=\{\rho<1\}$ from log-concavity of the first-passage probability, ``because sublevel sets of a concave function are convex.'' That implication is false (superlevel sets of a concave function are convex). The true statement: the supercritical domain $\mathcal M_+=\{\rho>1\}$ is the convex object (a superlevel set of a log-concave function), and $\mathcal M_-$ is its non-convex complement, but $\mathcal M_-$ is contractible by a coupling-retraction (Theorem~\ref{thm:stab}).
\item[(F3)] \emph{The small-noise first-passage asymptotic carries an algebraic prefactor.} The formula $p^{\ex}=e^{-\Lambda(1+O(\Lambda^{-1}\log\Lambda))}$ conflates an exponential rate with a probability; Laplace's method on the exact Wald representation gives $p^{\ex}\asymp\Lambda^{-1/2}e^{-\Lambda}$ (Section~\ref{sec:smallnoise}). The exponent $\Lambda$ is correct; the multiplicative-correction form is not. The main results use the exact Wald formula and are unaffected, but the constant $C$ in Theorem~\ref{thm:32} inherits the prefactor.
\item[(F4)] \emph{The Hilbert--Schmidt property of $K_\infty$ is generically false.} The energy integral of $|k_\infty|^2$ against $\nu_h\otimes\nu_h$ diverges for essentially all supercritical $g$, because the tree Martin kernel is singular on the diagonal and $\nu_h(B_n)\asymp\br(T)^{-n}$ (Proposition~\ref{prop:HS}). This is not a hard lemma awaiting proof; it is a false target to be abandoned.
\item[(F5)] \emph{The scalar many-to-one identity is false.} The identity $\E[\sum_{|v|=n}\prod_e F(e)]=(m_*\E_{q\otimes q}[F])^n$, and with it the identification $\bar m=m_*\E_{q\otimes q}[p_ew_e]$, silently assumes that consecutive edge weights along a ray are independent; they share a vertex type and are not. The correct annealed object is the matrix product of Proposition~\ref{prop:m2o}, with growth rate $\rho(M)$; the scalar $m_*\E[p]$ is only the $n=1$ factor and is neither an upper nor a lower bound for $\rho(M)$ in general. The Jensen gap survives in the form $\rho(M)\ge m_*e^{\E[\log p]}$, which we derive as a convexity statement about the pressure (Theorem~\ref{thm:pressure}). A related normalisation point: the growth rate of $\UGW(g)$ is $m_*=g''(1)/g'(1)$, not $\mu$ (Section~\ref{subsec:ugw}).
\end{itemize}

\subsection{The load-bearing input}
Hypothesis~\ref{hyp:red} is the load-bearing reduction. Its naive justification by a Girsanov comparison between the full and directed systems fails: the back-reaction drift is $O(1)$, so the Radon--Nikodym density is not tight toward $1$ (Proposition~\ref{prop:girsanov}). The correct mechanism is event-based and pathwise, and Section~\ref{sec:status} carries it out in a regime: it dissects the obstruction --- a back-reaction echo whose per-vertex size is $O(\varepsilon(\delta))$ but which threatens to compound over the exponential volume of the tree --- and proves (Theorem~\ref{thm:front}) that when the dissipation ratio $\kappa=\gamma_{\max}L_\sigma\Delta_{\max}/\alpha_{\min}$ is below $1$, echo decay strictly beats volume growth: influences decay geometrically in graph distance (a cone-of-influence estimate), echo sources are supported on the failed cluster whose expected volume is $(1-\rho(M))^{-1}$-bounded rather than $m_*^n$, and an exploration telescoping converts these into a quantitative total-variation bound. On this open region Hypothesis~\ref{hyp:red} is a theorem and everything below is unconditional. In the strong-coupling regime $\kappa\ge1$ the reduction remains open (Conjecture~\ref{conj:front}). Outside the saturated regime the natural object is a continuous-type branching process whose type is the failure strength of a vertex; that analysis --- which is the general-state-space programme one would have to build --- is open and is the honest home of the remaining difficulty. We do, however, show that its criticality question (though not its limit theory) dissolves into finite linear algebra: two-sided finite-matrix certificates squeeze the Perron eigenvalue of the continuous-type mean operator (Proposition~\ref{prop:sandwich}).

\section{The model and the reduction hypothesis}\label{sec:model}

\subsection{Unimodular random trees and harmonic measure}\label{subsec:ugw}
Let $\mathcal G_*$ be the space of rooted isomorphism classes of locally finite rooted graphs with the local topology; it is Polish \cite{AS04}. A measure $\mu$ on $\mathcal G_*$ is unimodular if it satisfies the mass-transport principle \cite{AL07}. We write $T\sim\UGW(g)$ for the unimodular Galton--Watson tree: the root has offspring distribution with generating function $g$, and every other vertex has the size-biased forward-degree distribution, with mean
\[
m_*=\frac{g''(1)}{g'(1)}
\]
(finite by the variance assumption). By \cite{Lyo90}, the branching number $\br(T)=\lim_n|\partial B_n(T)|^{1/n}$ exists a.s.\ on nonextinction and equals the mean forward degree of a non-root vertex, i.e.
\begin{equation}\label{eq:brm}
\br(T)=m_*\qquad\text{a.s.\ on }\{|T|=\infty\}.
\end{equation}

\begin{remarknonum}[Growth normalisation]
For the ordinary Galton--Watson tree one has $\br(T)=\mu=g'(1)$, but for $\UGW(g)$ --- the object that actually arises as the Benjamini--Schramm limit of $(G_n)$ --- only the root has offspring mean $\mu$; every subsequent vertex reproduces at the size-biased mean $m_*$, and it is $m_*$ that governs the a.s.\ exponential growth of $|\partial B_n|$ and every spectral quantity below. On the $d$-regular tree $m_*=d-1$ and the two normalisations differ only at the root, which is why Bethe-lattice checks cannot distinguish them. All formulas in this paper are stated with $m_*$.
\end{remarknonum}

Fix a realisation $T$. Let $P$ be the simple-random-walk transition kernel and $G$ the Green function; $K(x,\xi)=\lim_{y\to\xi}G(x,y)/G(o,y)$ the Martin kernel, $\partial T$ the space of ends, and $\nu_h$ the harmonic measure (the exit law of the walk) \cite{Woe00}. By \cite{LPP96,Kai85},
\begin{equation}\label{eq:dim}
\dimH(\partial T,\nu_h)=\frac{h_{\mathrm{RW}}}{\ell_{\mathrm{RW}}}=\frac{\log\br(T)}{\log(1/p_{\min})},\qquad p_{\min}=\frac{1}{\deg_{\max}},
\end{equation}
the Shannon--McMillan--Breiman content of which is $\nu_h(B_n(\xi))=e^{-n\log\br(T)+o(n)}$ for $\nu_h$-a.e.\ end $\xi$, where $B_n(\xi)$ is the set of ends sharing the first $n$ edges with $\xi$.

\subsection{The cascade SDE}
Let $D=\{1,\dots,m\}$ index hazard domains; each vertex $v$ carries an independent type $d(v)\sim q(\cdot)$ on $D$, with $q(d)>0$ for all $d\in D$ without loss of generality.

\begin{definition}[Cascade system]\label{def:system}
For $v\in V(T)$ and a state $s^{(v)}_t\in\R$,
\[
ds^{(v)}_t=\Big[-\alpha_{d(v)}\,s^{(v)}_t+\sum_{u\sim v}\gamma_{d(u)\to d(v)}\,\sigma\big(s^{(u)}_t-\theta_{uv}\big)\Big]\,dt+b_{d(v)}\,dW^{(v)}_t,
\]
with $\alpha_d\ge\alpha_{\min}>0$, $b_d^2\ge\mu_0>0$, thresholds $\theta_{uv}\ge\theta_{\min}>0$, couplings $\gamma_{d\to d'}\ge0$, $\sigma$ the logistic function, and $\{W^{(v)}\}$ independent Brownian motions. A vertex fails when $s^{(v)}$ crosses a threshold $\theta^{\mathrm{fail}}$ into an absorbing basin $B_{\mathrm{fail}}=[\theta^{\mathrm{fail}},\infty)$ before returning to a safe value $s_{\mathrm{safe}}$.
\end{definition}

Existence, uniqueness and non-explosion of the strong solution follow from the linear-growth and Lipschitz bounds together with the dissipativity in the linear drift \cite{KS91}.

\subsection{The directed pairwise first-passage weight}
Condition on $u$ having failed. In the directed pairwise idealisation, the stress $u$ exerts on a neighbour $v$ is treated as a fixed forcing, shifting the equilibrium of $v$; the linearised stress solves $dX_t=-\alpha_{d(v)}(X_t-m_{uv})\,dt+b_{d(v)}\,dW_t$ with $m_{uv}=c_{uv}/\alpha_{d(v)}$, and the transmission probability is the two-boundary hitting probability
\begin{equation}\label{eq:wald}
p^{\ex}_{uv}=\Prob\big(\tau_\theta<\tau_{\mathrm{safe}}\,\big|\,X_0=s^{(v)}_0\big)=\frac{W(z;h_\theta)}{W(z;h_\theta)+W(z;h_{\mathrm{safe}})},\qquad W(z;h)=\int_0^h e^{-\alpha y^2/\mu}\,dy,
\end{equation}
with $z=s_0-m_{uv}$, $\mu=b^2/(2\alpha)$, and $h_\theta$, $h_{\mathrm{safe}}$ the (signed) distances to the two boundaries. Crucially \eqref{eq:wald} is exact for the linear OU problem and depends only on the types $(d(u),d(v))$ and the parameters; we write $p_{d,d'}:=p^{\ex}_{uv}\in[0,1]$. We assume throughout the uniform ellipticity $\mu_0>0$, $m_{uv}<\theta_{uv}$, which gives $p_{d,d'}\ge p^{\mathrm{tr}}_{\min}>0$.

\begin{remark}[On the coupling parameter]
We absorb the modulation $\gamma_{d\to d'}$ into the forcing constant $c_{uv}$ in \eqref{eq:wald}, so that the per-edge transmission probability $p_{d,d'}\in[0,1]$ already accounts for it; the branching weight below is $p_{d,d'}$, a probability. This avoids the convention in which a separate factor $\gamma\cdot p^{\ex}$ could exceed $1$.
\end{remark}

\subsection{The saturated-coupling regime and the reduction hypothesis}
The directed pairwise weight is correct only if, once $u$ fails, the stress it exerts on each child is effectively a fixed forcing independent of how $u$ failed, and if the back-reaction $v\to u$ does not couple the children of $u$. The clean regime in which this holds is the following.

\begin{definition}[Saturated regime]\label{def:saturated}
The parameters are saturated if the failed basin floor exceeds every child threshold by a fixed margin: there is $\delta>0$ with $\inf_{s\in B_{\mathrm{fail}}}(s-\theta_{uw})\ge\delta$ uniformly, so that $\sigma(s^{(u)}-\theta_{uw})\ge1-\varepsilon(\delta)$ for all children $w$ once $u$ is failed, with $\varepsilon(\delta)\to0$ as $\delta\to\infty$.
\end{definition}

In the saturated regime the logistic coupling is pinned near $1$ as soon as the parent fails; the forcing a child sees is therefore (to within $\varepsilon(\delta)$) a fixed constant, and additional forcing from a back-reacting sibling cannot move an already-saturated coupling. This is precisely what decouples the children. The reduction we assume is:

\begin{hypothesis}[Cascade reduction / front-decoupling]\label{hyp:red}
There is a regime of parameters (containing the saturated regime of Definition~\ref{def:saturated} with $\delta$ large) and a sequence $\eta_n\downarrow0$ such that the law of the failed cluster of the root in $G_n$, as a random rooted graph, is within total variation $\eta_n$ of the genealogy of the multitype Galton--Watson process with type set $D$, offspring law inherited from $\UGW(g)$, and per-edge transmission probabilities $p_{d,d'}$ of \eqref{eq:wald}. Equivalently: (i) the per-edge marginal failure probability equals $p_{d,d'}$ to leading order; (ii) conditional on the failures of generation $k$, the failures into generation $k+1$ are asymptotically independent across parents and across siblings; and (iii) multi-parent (short-cycle) corrections are $o(1)$ as $G_n\xrightarrow{\mathrm{BS}}T$.
\end{hypothesis}

We discuss the status of Hypothesis~\ref{hyp:red} --- what is proved, what is conjectured, and why Girsanov fails --- in Section~\ref{sec:status}. The reader should regard Sections \ref{sec:spectral}--\ref{sec:stab} as theorems about the limiting branching process; their bearing on the SDE is exactly as strong as Hypothesis~\ref{hyp:red}.

\section{The reduction to a finite-type branching process}\label{sec:reduction}

\subsection{The annealed mean matrix}
We work in the size-biased GW that describes a typical neighbourhood: a non-root vertex has a random number of children whose mean is $m_*=g''(1)/g'(1)$ (the mean offspring of a size-biased vertex), finite by the variance assumption. On the $d$-regular tree, $m_*=d-1$.

\begin{theorem}[Reduction; conditional on Hypothesis~\ref{hyp:red}]\label{thm:reduction}
Under Hypothesis~\ref{hyp:red}, the failed cluster of the root converges in distribution (as a random rooted tree) to the family tree of the multitype Galton--Watson process on type set $D$ with mean matrix
\begin{equation}\label{eq:M}
M_{d,d'}=m_*\,q(d')\,p_{d,d'},\qquad d,d'\in D,
\end{equation}
where $p_{d,d'}$ is the transmission probability \eqref{eq:wald}. In particular the survival probability of the cascade is positive iff $\rho(M)>1$, and zero iff $\rho(M)\le1$, where $\rho(M)$ is the Perron root of the nonnegative matrix $M$.
\end{theorem}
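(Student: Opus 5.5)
The plan is to split the statement into two parts. First, identify the limiting object from Hypothesis~\ref{hyp:red}. Second, invoke the classical criticality theorem for finite-type Galton--Watson processes.

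\textbf{Step 1 (identifying the limit).} Hypothesis~\ref{hyp:red} says that the law of the failed cluster in $G_n$ is within total variation $\eta_n\to0$ of the genealogy of a multitype GW process with type set $D$. Total-variation convergence implies convergence in distribution in the local topology on $\mathcal G_*$, so the failed cluster converges in law to that genealogy. It remains to compute its mean matrix, which I would do by a one-generation computation on the size-biased tree. Fix a non-root failed vertex $u$ of type $d$. Its forward degree $N$ has the size-biased law with $\E N=m_*$. Each child $w$ carries an independent type $d(w)\sim q$, independent of $N$. By parts (i)--(ii) of the hypothesis, conditional on $d(w)=d'$ the child fails with probability $p_{d,d'}$. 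Hence, by Wald's identity,
\[
\E\Big[\sum_{w\text{ child of }u}\one\{d(w)=d',\,w\text{ fails}\}\Big]=\E[N]\,q(d')\,p_{d,d'}=m_*\,q(d')\,p_{d,d'},
\]
which is \eqref{eq:M}. The root has offspring mean $\mu$ instead of $m_*$. This changes only the first generation, which is a finite modification and does not affect survival.

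\textbf{Step 2 (criticality).} By assumption $q(d')>0$ for all $d'$, and uniform ellipticity gives $p_{d,d'}\ge p^{\mathrm{tr}}_{\min}>0$. Hence every entry of $M$ is strictly positive, so $M$ is primitive. Perron--Frobenius then gives a simple leading eigenvalue $\rho(M)$ with strictly positive eigenvectors. The offspring law is not the singular case, because $N$ is genuinely random, or at least a child can fail to transmit since $p<1$ generically. With $M$ primitive and the process non-singular, the classical multitype extinction theorem (Harris; Athreya--Ney, Ch.~V) applies. It says the extinction probability vector equals $\mathbf 1$ iff $\rho(M)\le1$, and is $<\mathbf 1$ componentwise iff $\rho(M)>1$. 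For this theorem only the finite means $m_*<\infty$ matter, not a full $L\log L$ condition.

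\textbf{Step 3 (transfer back).} Survival in the finite graphs must be read as the event that the cluster is large, e.g.\ $|S|\ge k$ with $k\to\infty$ after $n\to\infty$. Since the variation distance is at most $\eta_n$, $\Prob_n(|S|\ge k)$ converges to the limiting probability that the GW process has at least $k$ vertices. Letting $k\to\infty$ recovers the limiting survival probability.

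\textbf{Main obstacle.} The delicate point is Step 1's independence structure. The type of a child must be independent of its parent's type and of the number of siblings, so that the first moment factorises as $m_*q(d')p_{d,d'}$ and not as an average over a correlated joint law. It also needs care to confirm that the non-singularity condition genuinely holds, so that $\rho(M)=1$ falls on the extinction side. I would verify non-singularity by checking that $\Prob(\text{no failed children})>0$, which holds because every $p_{d,d'}<1$ under the two-boundary formula \eqref{eq:wald}.
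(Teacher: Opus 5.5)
Your proposal is correct and takes the same route as the paper: compute $M_{d,d'}=m_*\,q(d')\,p_{d,d'}$ by conditioning on the size-biased forward degree, the i.i.d.\ types and the Bernoulli transmissions supplied by Hypothesis~\ref{hyp:red}, then invoke the Athreya--Ney extinction criterion. Your extra checks tighten points the paper's short proof passes over, where its ``irreducible (indeed any)'' phrasing overstates the criterion: entrywise positivity (hence primitivity) of $M$, non-singularity via $p_{d,d'}<1$ (which is what places $\rho(M)=1$ on the extinction side), and the root's mean $\mu$ altering only the first generation.
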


\begin{proof}
By Hypothesis~\ref{hyp:red} the limiting cluster is a multitype GW genealogy with type set $D$. The mean number of failed type-$d'$ children of a failed type-$d$ vertex is computed by conditioning on the offspring of the parent in $\UGW(g)$: the expected number of children is $m_*$; each child is independently of type $d'$ with probability $q(d')$; given the types, the edge transmits with probability $p_{d,d'}$ by Hypothesis~\ref{hyp:red}(i) and the conditional independence of Hypothesis~\ref{hyp:red}(ii). Multiplying gives \eqref{eq:M}. The survival dichotomy is the standard criterion for irreducible (indeed any) multitype GW processes: extinction is certain iff $\rho(M)\le1$ \cite{AN72}.
\end{proof}

\begin{remark}[Many-to-one: a warning]
One might be tempted to write the scalar many-to-one identity here, $\E[\sum_{|v|=n}\prod_{e\in o\to v}F(e)]=(m_*\E_{q\otimes q}[F])^n$. This identity is false in general because consecutive edge factors along a ray share an endpoint type; the correct statement is the matrix identity of Proposition~\ref{prop:m2o}, whose growth rate is $\rho(m_*A_F)$, and which specialises with $F=p$ to identify $\rho(M)$ as the annealed exponential growth rate of the expected failed population. See Section~\ref{sec:pressure} and Appendix~\ref{app:m2o}.
\end{remark}

\subsection{Why the boundary integrates out, and where it does not}
The matrix $M$ has no reference to $\partial T$. This is correct precisely because, in the saturated regime, the offspring's failure strength is independent of the parent's: once $u$ fails, every child sees the same saturated forcing, so a child's transmission probability depends on the types alone and not on a continuous ``how hard did the parent fail'' mark. The next proposition records the dichotomy that controls when this finite-type collapse is legitimate; it is the precise boundary between the present theory and the open one.

\begin{proposition}[Finite-type vs.\ continuous-type dichotomy]\label{prop:dichotomy}
Let $A_u$ denote the failure strength of a vertex $u$ (e.g.\ the overshoot $s^{(u)}_{\tau_\theta}-\theta^{\mathrm{fail}}$).
\begin{enumerate}
\item In the saturated regime the conditional law of a child's transmission given the parent's trajectory depends on the parent only through the event $\{u\text{ failed}\}$, not through $A_u$. Hence the failure strengths $A_v$ of the children are conditionally i.i.d.\ and independent of $A_u$, the offspring counts are conditionally Binomial with the deterministic parameter $p_{d,d'}$, and the process is the finite-type GW of Theorem~\ref{thm:reduction}.
\item Outside the saturated regime the coupling $\sigma(s^{(u)}-\theta_{uw})$ lies in its sensitive range, $A_u$ is inherited by the children with positive correlation, and the natural state space is $D\times\R$ (hazard domain $\times$ failure strength), a continuous type. The offspring law is then a Cox (doubly-stochastic) mixture, and criticality is governed by the Perron root of a mean operator on $L^2$ of the strength variable, not by a finite matrix.
\end{enumerate}
\end{proposition}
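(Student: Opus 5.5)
Both parts come from one observation. Conditionally on the parent $u$'s trajectory after its failure time $\tau_u$, the child $w$'s state is a one-dimensional diffusion driven by its own Brownian motion $W^{(w)}$, and the parent enters its drift only through the forcing term $\gamma_{d(u)\to d(w)}\sigma(s^{(u)}_t-\theta_{uw})$. So the plan is to write the child's SDE on $[\tau_u,\infty)$ with this forcing treated as an exogenous, adapted input $c_t$. Here the back-reaction is frozen, as in the directed pairwise idealisation of Hypothesis~\ref{hyp:red}. Then the transmission probability is a functional $\Phi_{d(u),d(w)}[c_\cdot]$ of the forcing path. The question in both parts is how much of the parent's trajectory, and in particular of $A_u$, survives in $c_\cdot$.

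\textbf{Part (1).} After failure, $s^{(u)}_t\in B_{\mathrm{fail}}$ because the basin is absorbing. Definition~\ref{def:saturated} then gives $c_t\in[\gamma(1-\varepsilon(\delta)),\gamma]$ for all $t\ge\tau_u$, whatever the overshoot $A_u$. In the limiting saturated idealisation the forcing equals the constant $c_{uv}$ of \eqref{eq:wald}. I will then argue as follows.
\begin{itemize}
\item[(a)] $\Phi[c_\cdot]$ equals the Wald probability $p_{d,d'}$, which depends only on types.
\item[(b)] For $\varepsilon(\delta)>0$, a synchronous coupling and the comparison theorem for one-dimensional SDEs sandwich the child's path between the solutions with constant forcing $\gamma(1-\varepsilon)$ and $\gamma$. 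This gives $|\Phi[c_\cdot]-p_{d,d'}|=O(\varepsilon(\delta))$ uniformly in $A_u$.
\item[(c)] Given $\mathcal F_{\tau_u}$, the children are driven by independent Brownian motions $W^{(w)}$ and see forcings that are measurable only with respect to the event $\{u\text{ failed}\}$. Hence their failure indicators, and their own overshoots $A_w$, are conditionally independent. The indicators are Bernoulli$(p_{d,d'})$, so for each child type $d'$ the count is Binomial given the number of type-$d'$ children. Each $A_w$ is a function of $W^{(w)}$ and the constant forcing only, so it is independent of $A_u$.
\end{itemize}
Combining (a)–(c) with Hypothesis~\ref{hyp:red}(ii)–(iii) identifies the process with the finite-type GW of Theorem~\ref{thm:reduction}, with mean matrix \eqref{eq:M}.

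\textbf{Part (2).} Here the claim is positive dependence plus the identification of the right operator. I would use monotonicity throughout.
\begin{itemize}
\item[(a)] $\sigma$ is strictly increasing. Outside saturation, $s^{(u)}-\theta_{uw}$ lies in a range where $\sigma'$ is bounded below.
\item[(b)] A larger overshoot $A_u$ gives a pointwise larger forcing path for an initial time window, by the comparison theorem applied to $u$'s own SDE started from the larger value.
\item[(c)] Comparison again makes the child's hitting event monotone in the forcing. So $a\mapsto\Phi(a):=\Prob(w\text{ fails}\mid A_u=a)$ is strictly increasing. The same applies to the law of $A_w$, which is stochastically increasing in $a$.
\end{itemize}
Positive correlation of $A_u$ and $A_w$ then follows from Harris/FKG for monotone functions of the single variable $A_u$. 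Conditionally on $A_u$ the children are independent with success probability $\Phi(A_u)$. Unconditionally, therefore, the offspring count is a Binomial mixed over the random parameter $\Phi(A_u)$, which is the Cox structure. The mean offspring kernel is
\[
k((d,a),(d',\mathrm{d}a'))=m_*q(d')\,\Prob(w\text{ fails},A_w\in \mathrm{d}a'\mid A_u=a).
\]
It acts on functions on $D\times\R$. The standard multitype criterion \cite{AN72}, in its general-state-space form, puts criticality at the Perron root of this operator.

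\textbf{Main obstacle.} The hard step is strictness in (2): showing that $a\mapsto\Phi(a)$ is genuinely non-constant, so that the type cannot be collapsed. This needs the influence of the initial forcing difference to survive until the hitting time. I would get it from a Girsanov/strong-Feller argument on a short time window, using $\sigma'>0$ and ellipticity $b^2\ge\mu_0$. A secondary caution is that in (1) exact independence holds only in the $\delta\to\infty$ idealisation. At finite $\delta$ the proof delivers an $O(\varepsilon(\delta))$ total-variation statement, and I would phrase part (1) that way.
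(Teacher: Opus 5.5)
Your proposal is essentially the paper's argument. In (1), saturation pins the parent's post-failure forcing to within $\varepsilon(\delta)$ of a constant, so each child's first passage is driven by its own Brownian motion and gives the type-only Wald probability $p_{d,d'}$; in (2), the sensitivity $\sigma'\neq0$ makes all children share a dependence on the parent's strength, giving a Cox (mixed-Binomial) offspring law and an integral mean operator on $D\times\R$. The differences are of detail only: the paper reads off the sibling covariance $\Cov(\chi_{uv},\chi_{uv'})=\Var_{A_u}(p(A_u))$ from the linearised shift $\gamma\sigma'(\cdot)A_u$, whereas you use the comparison theorem and Chebyshev's association inequality to get parent--child correlation, and you correctly make explicit that (1) holds only up to $O(\varepsilon(\delta))$ and that non-constancy of $\Phi$ needs an argument.
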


\begin{proof}
(1) is immediate from Definition~\ref{def:saturated}: $\sigma\equiv1-O(\varepsilon(\delta))$ once the parent is in $B_{\mathrm{fail}}$, so the child's forcing constant $c_{uv}$ in \eqref{eq:wald} is (up to $\varepsilon(\delta)$) parent-trajectory-independent; the child's first-passage is then driven by its own Brownian motion alone, giving conditional independence and a deterministic $p_{d,d'}$.

(2) In the sensitive range $\sigma'\not\approx0$, so a perturbation $A_u$ of the parent state changes the forcing on every child by $\gamma\sigma'(\cdot)A_u\ne0$; the children's first-passage problems share this perturbation, inducing $\Cov(\chi_{uv},\chi_{uv'})=\Var_{A_u}(p(A_u))>0$, where $\chi$ is the transmission indicator and $p(\cdot)$ the strength-dependent transmission probability. A branching process whose offspring law is the $A$-mixture, with $A$ inherited, is a branching process on the type space $D\times\R$; its mean semigroup is the integral operator $(\mathcal Mf)(A)=\int K(A,dA')f(A')$ and criticality is $\rho(\mathcal M)=1$. (We do not analyse the limit theory of this operator here; but see Proposition~\ref{prop:sandwich} for its criticality, and Section~\ref{sec:discussion}.)
\end{proof}

The remainder of the paper develops case (1). The single scalar that case (2) hides --- the failure-strength variance --- is exactly the $O(1)$ quantity that defeats the Girsanov approach (Section~\ref{sec:status}).

\section{The annealed spectral identity and the dimension gap}\label{sec:spectral}

We now make (F1) precise. Equip each ray with the additive cocycle $S_n(\xi)=\sum_{e\in o\to\xi,\ |e|\le n}\log(p_ew_e)$, where $w_e\ge0$ are bounded edge weights ($w_e\le w_{\max}$) and $p_e=p_{d(e_-),d(e_+)}\in(0,1]$ is the transmission probability along $e$. Two growth rates compete.

\begin{definition}\label{def:rates}
The quenched (geometric-mean) rate of the weighted cell count and the annealed rate are
\[
\ell:=\log m_*+\E_{q\otimes q}\big[\log(p_{e_0}w_{e_0})\big],\qquad \bar m:=\rho(M)\ \ \text{(when }w\equiv1\text{)},
\]
where $e_0$ is the first edge of a $\nu_h$-typical ray. (The scalar $m_*\E_{q\otimes q}[p_ew_e]$ is only the one-step annealed factor, not the annealed rate; see (F5) and Proposition~\ref{prop:m2o}.)
\end{definition}

\begin{theorem}[The spectral/dimension dichotomy]\label{thm:gap}
Assume uniform ellipticity ($p_{d,d'}\ge p^{\mathrm{tr}}_{\min}>0$) and bounded weights. Then:
\begin{enumerate}
\item (Quenched limit.) For $\nu_h$-a.e.\ ray, $n^{-1}\big(n\log m_*+S_n(\xi)\big)\to\ell$, and consequently the harmonic-measure-dimension expression equals the quenched rate:
\[
p_{\min}^{-\dimH(\partial T,\nu_h;\,p^{\ex})}=e^{\ell},\qquad \dimH(\partial T,\nu_h;\,p^{\ex}):=\frac{\ell}{\log(1/p_{\min})}.
\]
\item (Annealed criticality.) The cascade is critical iff $\bar m=1$, where $\bar m$ is the Perron root $\rho(M)$ of \eqref{eq:M} (with $w\equiv1$).
\item (The gap.) $\bar m\ge e^{\ell}$, i.e.\ $\rho(M)\ge p_{\min}^{-\dimH}$, with equality if and only if $p_ew_e$ is $\nu_h$-almost surely constant. Hence the identity $\rho=p_{\min}^{-\dimH}$ holds exactly on boundary-homogeneous trees (in particular the Bethe lattice) and strictly undercounts criticality otherwise.
\end{enumerate}
\end{theorem}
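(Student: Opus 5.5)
The plan is to prove the three parts in order. Part (2) is immediate from earlier results, and the real work is in part (3), specifically its equality case.

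\emph{Part (1).} Fix $T$ and a $\nu_h$-typical ray $\xi=(o=v_0,v_1,v_2,\dots)$. The types $d(v)$ are i.i.d.\ with law $q$ and independent of $T$ and of the random walk that defines $\nu_h$. Hence, conditionally on the ray, the sequence $(d(v_k))_{k\ge0}$ is i.i.d.\ $q$. The edge terms $X_k=\log(p_{d(v_{k-1}),d(v_k)}w_{e_k})$ therefore form a stationary, $1$-dependent, bounded sequence. Boundedness uses uniform ellipticity $p\ge p^{\mathrm{tr}}_{\min}$, together with $w_e\le w_{\max}$ and $w_e$ bounded away from $0$, or equal to a type-measurable mark; I will treat the weights as functions of the endpoint types or as i.i.d.\ marks. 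Splitting $\sum_k X_k$ into its even-indexed and odd-indexed terms gives two sums of i.i.d.\ variables. The strong law applied to each yields $n^{-1}S_n(\xi)\to\E_{q\otimes q}[\log(p_{e_0}w_{e_0})]$, and adding the deterministic $\log m_*$ gives the limit $\ell$. A Fubini argument over the independent types upgrades this to $\nu_h$-a.e.\ $\xi$. The displayed identity is then definitional: $p_{\min}^{-\ell/\log(1/p_{\min})}=e^{\ell}$.

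\emph{Part (2).} This is exactly the survival dichotomy of Theorem~\ref{thm:reduction}.

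\emph{Part (3), inequality.} Let $M_w$ denote $M$ with $p_{d,d'}$ replaced by $p_{d,d'}w_{d,d'}$, so that $M_1=M$. Iterating \eqref{eq:M} gives the annealed path expansion (the matrix many-to-one identity)
\[
\sum_{d_0}q(d_0)\,(M_w^n\mathbf 1)(d_0)=m_*^n\,\E\Big[\prod_{k=1}^n p_{d_{k-1}d_k}w_{d_{k-1}d_k}\Big],
\]
where $d_0,d_1,\dots$ are i.i.d.\ $q$. Since $q>0$ and $\mathbf 1>0$, the left side grows at rate $\rho(M_w)^n$ up to constants, by Gelfand's formula and the comparability of all positive vectors. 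Jensen's inequality applied to $\exp$ gives $\E[\prod_k]\ge \exp(n\,\E_{q\otimes q}\log(pw))$. Taking $n$-th roots and letting $n\to\infty$ gives $\rho(M_w)\ge e^{\ell}$.

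\emph{Part (3), equality.} This is the main obstacle: Jensen is strict for each fixed $n$, but the slack could vanish after taking $n$-th roots. I would pass to the pressure $\varphi(\beta)=\log\rho(M^{(\beta)})$, where $M^{(\beta)}_{dd'}=m_*q(d')(p_{dd'}w_{dd'})^\beta$. By the same path expansion and Hölder's inequality, $\varphi$ is convex. It is real-analytic by simplicity of the Perron root, since $M^{(\beta)}$ is strictly positive. Its derivative is $\varphi'(0)=\E_{q\otimes q}\log(pw)$, because $M^{(0)}=m_*\mathbf 1q^{\top}$ has left and right Perron vectors $q$ and $\mathbf 1$. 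So $\varphi(1)\ge\varphi(0)+\varphi'(0)=\ell$.

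If equality holds, convexity forces $\varphi$ to be affine on $[0,1]$. The second derivative of $\varphi$ at $0$ is then the asymptotic variance of the additive functional $\sum_k X_k$, and this must vanish. Vanishing asymptotic variance for a $1$-dependent functional of an i.i.d.\ chain means $\log(p_{dd'}w_{dd'})=c+f(d')-f(d)$ is a coboundary. In that case $M_w=m_*e^{c}\,\diag(e^{-f})\,\mathbf 1\,(q\,e^{f})^{\top}$ has rank one, with
\[
\rho(M_w)=m_*e^{c}\,\E_q[e^{f}]\,\E_q[e^{-f}].
\]
By Cauchy--Schwarz this exceeds $m_*e^{c}=e^{\ell}$ unless $f$ is constant, that is, unless $pw$ is a.s.\ constant. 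The converse direction, that a.s.\ constant $pw$ gives equality, is trivial. The Bethe-lattice statement is then the homogeneous special case.
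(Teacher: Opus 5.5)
Your parts (1), (2) and the inequality in (3) are fine and essentially follow the paper. The even/odd splitting with the strong law is an elementary substitute for the paper's Birkhoff argument. Reading (3) with $M_w$ in place of $M$ is the right interpretation, since for $w\not\equiv1$ the matrix $M$ does not see $w$ while $\ell$ does.

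The failure is in the last step of the equality case. For a coboundary $\log(p_{dd'}w_{dd'})=c+f(d')-f(d)$ you correctly get $M_w=uv^{\top}$ with $u_d=m_*e^{c}e^{-f(d)}$ and $v_{d'}=q(d')e^{f(d')}$. However, the nonzero eigenvalue of a rank-one matrix is $v^{\top}u=m_*e^{c}\sum_dq(d)e^{f(d)}e^{-f(d)}=m_*e^{c}$, not $m_*e^{c}\,\E_q[e^{f}]\,\E_q[e^{-f}]$. Since $\E_{q\otimes q}[f(d')-f(d)]=0$, we have $\ell=\log m_*+c$, so $\rho(M_w)=e^{\ell}$ exactly. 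There is no Cauchy--Schwarz slack: equality \emph{holds}. You can see this without matrices too: on a coboundary, $S_n=nc+f(d_n)-f(d_0)$ has bounded fluctuations, so $\varphi(\beta)=\log m_*+\beta c$ is affine.

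This is not a repairable slip. It shows that the ``only if'' half of the equality clause in (3) is false as stated, so no argument can close this step. Concretely, take $m=2$, $q=(\tfrac12,\tfrac12)$, $w\equiv1$, and $p_{11}=p_{22}=e^{-1}$, $p_{12}=e^{-1/2}$, $p_{21}=e^{-3/2}$, i.e.\ $c=-1$ and $f=(0,\tfrac12)$. Uniform ellipticity and irreducibility hold. The matrix $A_p$ has trace $e^{-1}$ and determinant $0$, so $\rho(M)=m_*e^{-1}$. Meanwhile $\E_{q\otimes q}[\log p]=\tfrac14(-1-\tfrac12-\tfrac32-1)=-1$, so $e^{\ell}=m_*e^{-1}$. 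This is equality with non-constant $p$.

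The paper's proof has the same defect at a different point. Its Jensen-equality remark is heuristic, for exactly the reason you identified. The rigorous version it defers to, Theorem~\ref{thm:pressure}(3), asserts that the Green--Kubo variance $\varphi''$ vanishes iff $\log p$ is constant along the chain. As you correctly observed, it actually vanishes iff $\log p$ is cohomologous to a constant.

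What your argument does prove, once the final computation is corrected and combined with the easy converse above, is the true statement: $\rho(M_w)=e^{\ell}$ iff $\log(p_{dd'}w_{dd'})=c+f(d')-f(d)$ on $\supp q\times\supp q$ for some constant $c$ and function $f$. Constancy of $p_ew_e$ is sufficient but not necessary. Consequently, the identity $\rho=p_{\min}^{-\dimH}$ does not single out boundary-homogeneous trees.
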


\begin{proof}
(1) The shift along rays is measure-preserving and ergodic for $\nu_h$ \cite{Kai85}; the vertex types are i.i.d.\ $q$ and independent of the geometry and of the walk, so the type pairs along a $\nu_h$-typical ray form a stationary ergodic (1-dependent) sequence with marginal $q\otimes q$, and $S_n$ is an additive cocycle with integrable increment ($|\log(p_ew_e)|\le\log w_{\max}+\log(1/p^{\mathrm{tr}}_{\min})$). Birkhoff's pointwise ergodic theorem yields $n^{-1}S_n\to\E_{q\otimes q}[\log(pw)]$ a.e.; adding the a.s.\ cell growth $\log m_*$ per generation \eqref{eq:brm} gives the quenched rate $\ell$. The displayed identity is then algebra: $p_{\min}^{-\ell/\log(1/p_{\min})}=\exp\big(\ell\cdot\log(1/p_{\min})/\log(1/p_{\min})\big)=e^{\ell}$.

(2) is Theorem~\ref{thm:reduction} with $\rho(M)$ the annealed branching rate (Proposition~\ref{prop:m2o}); criticality of a GW process is mean one.

(3) We give the elementary argument here and the sharper one-parameter form in Theorem~\ref{thm:pressure}. Let $\pi$ be any probability vector; by the variational lower bound for the Perron root of a nonnegative irreducible matrix (equivalently, by evaluating the matrix many-to-one identity of Proposition~\ref{prop:m2o} and applying Jensen to the exponential of the ergodic sum),
\[
\log\rho(M)=\lim_n\frac1n\log\big(m_*^n\,q^\top A_p^n\one\big)=\lim_n\frac1n\log\E\big[m_*^ne^{S_n}\big]\ge\lim_n\frac1n\,\E\big[\log(m_*^ne^{S_n})\big]=\ell,
\]
where the expectation is over the i.i.d.\ types along the spine. Equality in Jensen holds iff $m_*^ne^{S_n}$ has vanishing relative fluctuations, i.e.\ iff $\log(p_ew_e)$ is a.s.\ constant along the chain; since $q>0$ this means $p_{d,d'}w$ is constant on $\supp q\times\supp q$, i.e.\ $p_ew_e$ is $\nu_h$-a.s.\ constant. When the boundary is homogeneous, $p_ew_e\equiv c$ and both sides equal $m_*c$. The rigorous equality-case analysis, via strict convexity of the pressure, is Theorem~\ref{thm:pressure}(3).
\end{proof}

\begin{remark}\label{rem:gap}
The conceptual content of (F1): a spectral radius (an $L^2$/branching growth rate) is an arithmetic average of multiplicative weights, while a harmonic-measure dimension is a geometric average ($\nu_h$ is the measure of maximal entropy along rays). These coincide only without fluctuations, which is why the Bethe lattice --- which has no boundary fluctuations --- cannot detect the gap. The next section shows that this is the visible part of a one-parameter structure: both rates are read off a single convex pressure function attached to the tilted mean matrices, and the same finite-dimensional object then answers a question that Section~\ref{sec:discussion} would otherwise defer to the operator programme --- the dimension of the set of ends the cascade actually reaches.
\end{remark}

\section{The annealed pressure and the geometry of the failed boundary}\label{sec:pressure}

The quenched/annealed dichotomy of Section~\ref{sec:spectral} admits a one-parameter completion. Throughout this section, $p_{d,d'}\in[p^{\mathrm{tr}}_{\min},1]$ (uniform ellipticity) and the coupling pattern is irreducible in the sense of Section~\ref{sec:PF}.

\subsection{The many-to-one identity is a matrix identity}
We first record the spectral form of the many-to-one lemma, together with the precise (and narrow) scope of its scalar simplification.

\begin{proposition}[Spectral form of many-to-one]\label{prop:m2o}
Let $F:D\times D\to[0,\infty)$ be a type-pair edge functional and set $(A_F)_{d,d'}:=q(d')\,F(d,d')$. Then
\begin{equation}\label{eq:m2o}
\E\Big[\sum_{|v|=n}\prod_{e\in o\to v}F(e)\Big]=m_*^n\,q^\top A_F^n\one,
\end{equation}
whose exponential growth rate is $\rho(m_*A_F)$. In particular, with $F=p$, the annealed growth rate of the expected failed population is $\rho(M)$, where $M=m_*A_p$ is the mean matrix \eqref{eq:M}. The scalar identity $\E[\sum_{|v|=n}\prod_eF(e)]=(m_*\E_{q\otimes q}[F])^n$ holds if and only if $d'\mapsto\E_{d\sim q}[F(d,d')]$ is constant on $\supp q$ (e.g.\ $m=1$, or product-form $F(d,d')=a(d)b(d')$ with $q$-constant $\E_q[a]b(\cdot)$); in general $\rho(M)$ and $m_*\E_{q\otimes q}[p]$ are not comparable in a fixed direction.
\end{proposition}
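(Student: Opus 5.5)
The plan is to condition on the tree geometry and compute the type expectation along each fixed ancestral path exactly, as a finite-state Markov-chain sum. By construction the types $d(v)$ are i.i.d.\ with law $q$ and independent of the shape of $T$. So for a fixed vertex $v$ at depth $n$, with path $o=v_0,v_1,\dots,v_n=v$, the types $(d(v_0),\dots,d(v_n))$ are i.i.d.\ $q$. Hence
\[
\E\Big[\prod_{k=1}^nF(d(v_{k-1}),d(v_k))\Big]=\sum_{d_0,\dots,d_n}q(d_0)\prod_{k=1}^n q(d_k)F(d_{k-1},d_k)=q^\top A_F^n\one .
\]
This is just the definition $(A_F)_{d,d'}=q(d')F(d,d')$ unrolled. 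The value does not depend on $v$, so summing over $|v|=n$ and taking expectation over the tree gives a factor $\E|\{v:|v|=n\}|$. This is $m_*^n$ in the size-biased convention of Section~\ref{sec:reduction}, in which every vertex, the root included, reproduces with mean $m_*$. With the genuine $\UGW(g)$ root one gets $\mu m_*^{n-1}$ instead, which changes nothing in the growth rate. This proves \eqref{eq:m2o}.

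For the growth rate I would not assume irreducibility. Since $q>0$ entrywise, every entry of $A_F^n$ is bounded above by $(\min_d q(d))^{-1}$ times its weighted row sum. This gives
\[
c\,\|A_F^n\|\le q^\top A_F^n\one\le\|A_F^n\|
\]
in the max-row-sum norm, with $c=\min_d q(d)>0$. Gelfand's formula then yields $(q^\top A_F^n\one)^{1/n}\to\rho(A_F)$, so the rate of the left side of \eqref{eq:m2o} is $\rho(m_*A_F)$. Taking $F=p$ gives $\rho(M)$.

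For the scalar simplification, write $g(d'):=\E_{d\sim q}[F(d,d')]$, so that $q^\top A_F=(q(d')g(d'))_{d'}$.
\begin{itemize}
\item If $g\equiv\lambda$ on $\supp q$, then $q$ is a left eigenvector, $q^\top A_F=\lambda q^\top$. Hence $q^\top A_F^n\one=\lambda^n$, and $\lambda=\E_{q\otimes q}[F]$. This gives sufficiency, including the cases $m=1$ and the product form.
\item For non-comparability I would exhibit two explicit $2\times2$ examples with $q=(\tfrac12,\tfrac12)$. A diagonal-heavy $p$ such as $p=\diag(1,\epsilon)$ has $\rho(M)=m_*/2>m_*\E[p]=m_*(1+\epsilon)/4$. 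For the reverse strict inequality, the cleanest route is to use that the one-step factor $\E_{q\otimes q}[F]=q^\top A_F\one$ need not dominate $\rho$ from below once $F$ is non-symmetric. I would search for the counterexample numerically among $2\times2$ matrices.
\end{itemize}

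The step I expect to be the real obstacle is the ``only if'' direction, and I suspect it is false as literally stated. If $F(d,d')=b(d')$ depends only on the child type, the path types are i.i.d., so $q^\top A_F^n\one=(\E_q b)^n$ exactly, yet $g=b$ need not be constant.

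The identity for all $n$ is equivalent to $\sum_n z^nq^\top A_F^n\one=(1-z\E F)^{-1}$, that is, to the rational function $q^\top(I-zA_F)^{-1}\one$ having a single pole. I would first try to characterise this via the Krylov space generated by $q^\top$. The condition is that $q^\top$, restricted to the cyclic subspace generated with $\one$, acts as an eigenvector. This is strictly weaker than $g$ constant.

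So in writing the proof I would prove sufficiency plus this Krylov characterisation, and record the child-only example as showing that constancy of $g$ is not necessary. That is consistent with the parenthetical product-form clause, which already allows non-constant $b$.
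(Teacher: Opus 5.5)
Your derivation of \eqref{eq:m2o} is the paper's: unroll the i.i.d.\ types along a fixed path into $q^\top A_F^n\one$, then multiply by the expected generation size. You are also right that the exact factor $m_*^n$ presupposes a size-biased root; Appendix~\ref{app:m2o} itself gives $\mu m_*^{n-1}$ for the $\UGW$ root. Your growth-rate argument is more general than the paper's. The paper appeals to Gelfand's formula plus irreducibility, whereas your sandwich $c\,\|A_F^n\|_\infty\le q^\top A_F^n\one\le\|A_F^n\|_\infty$, with $c=\min_dq(d)$, needs only $A_F\ge0$ and $q>0$.

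On the ``only if'' clause you are correct, and the fault lies in the paper's proof. It asserts that the scalar identity for all $n$ ``forces $q^\top A_F=(\E_{q\otimes q}F)\,q^\top$''. With $\lambda:=\E_{q\otimes q}F$, the identity says only that $q^\top(A_F-\lambda I)$ annihilates $\mathrm{span}\{A_F^k\one:k\ge0\}$. That is your Krylov criterion, and it is an exact equivalence: induction on $n$ gives one direction, linearity the other. Your example $F(d,d')=b(d')$ gives $A_F=\one(q\circ b)^\top$ and $q^\top A_F^n\one=(\E_qb)^n$, while $\E_{d\sim q}F(d,\cdot)=b$ is non-constant. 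With $0<b\le1$ it satisfies the uniform ellipticity and irreducibility standing in Section~\ref{sec:pressure}. So the ``only if'' half of the proposition is false, and your replacement (sufficiency plus the Krylov criterion) is the correct statement.

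Three loose ends remain.
\begin{enumerate}
\item[(i)] The identity is not equivalent to $q^\top(I-zA_F)^{-1}\one$ merely having a single pole. For $q=(\tfrac12,\tfrac12)$ and $F=\diag(1,0)$ this function is $(1-z/4)/(1-z/2)$, yet $q^\top A_F^2\one=\tfrac18\ne(\tfrac14)^2$. The correct equivalent is equality with $(1-\lambda z)^{-1}$.
\item[(ii)] The parenthetical does not allow non-constant $b$. For $F(d,d')=a(d)b(d')$ one has $q^\top A_F^n\one=(\E_q[ab])^{n-1}\,\E_qa\,\E_qb$, so the identity holds iff $\Cov_q(a,b)=0$. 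The paper's condition that $\E_q[a]\,b(\cdot)$ be $q$-constant instead forces $b$ constant; your example is the missed case $a\equiv1$.
\item[(iii)] Non-comparability, which the paper's proof does not address at all, needs no numerical search for the second direction. With $q=(\tfrac12,\tfrac12)$ and $F=\begin{pmatrix}\epsilon&1\\ \epsilon&\epsilon\end{pmatrix}$, one has $\rho(A_F)=\tfrac12(\epsilon+\sqrt\epsilon)<\tfrac14(1+3\epsilon)=\E_{q\otimes q}F$ for all $\epsilon\in(0,1)$, since $2\sqrt\epsilon<1+\epsilon$. Your first example $\diag(1,\epsilon)$ violates ellipticity. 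Replacing its zero off-diagonal entries by a small $\eta>0$ keeps $\rho(A_F)>\E_{q\otimes q}F$: for symmetric $F$ and uniform $q$ this is the Rayleigh quotient at $\one$, and it is strict because the row sums $1+\eta$ and $\eta+\epsilon$ differ.
\end{enumerate}
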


\begin{proof}
Condition on the types along a fixed ray of length $n$: the vertex types $(d_0,\dots,d_n)$ are i.i.d.\ $q$, and the edge factors share endpoints, so
\[
\E\Big[\prod_{k=1}^nF(d_{k-1},d_k)\Big]=\sum_{d_0,\dots,d_n}q(d_0)\prod_{k=1}^nq(d_k)F(d_{k-1},d_k)=q^\top A_F^n\one.
\]
Multiplying by the expected number of rays, $m_*^n$ along the size-biased spine (Appendix~\ref{app:m2o}), gives \eqref{eq:m2o}. The growth rate is $\rho(m_*A_F)$ by Gelfand's formula together with $q>0$, $\one>0$ and irreducibility. The scalar identity for all $n$ forces $q^\top A_F=(\E_{q\otimes q}F)\,q^\top$, i.e.\ $q$ is a left eigenvector of $A_F$ with eigenvalue $\E_{q\otimes q}F$, which is the stated condition.
\end{proof}

\subsection{The pressure function}

\begin{definition}[Tilted mean matrices and annealed pressure]\label{def:pressure}
For $\beta\ge0$ define $M^{(\beta)}_{d,d'}:=m_*\,q(d')\,p_{d,d'}^\beta$ and
\[
\varphi(\beta):=\log\rho\big(M^{(\beta)}\big).
\]
Thus $M^{(1)}=M$ and $\varphi(1)=\log\rho(M)$ is the annealed rate of Proposition~\ref{prop:m2o}.
\end{definition}

\begin{theorem}[Pressure; the Jensen gap as a chord--tangent inequality]\label{thm:pressure}
Under uniform ellipticity and irreducibility:
\begin{enumerate}
\item $\varphi$ is convex on $[0,\infty)$ and real-analytic, with $\varphi(0)=\log m_*$ and $\varphi(1)=\log\rho(M)$.
\item $\varphi'(0)=\E_{q\otimes q}[\log p_{d,d'}]$, and for $\nu_h$-a.e.\ ray $\xi$, $n^{-1}\sum_{e\in o\to\xi,\ |e|\le n}\log p_e\to\E_{q\otimes q}[\log p]$; i.e.\ $\varphi(0)+\varphi'(0)=\ell$ is the quenched rate of Theorem~\ref{thm:gap} (with $w\equiv1$).
\item (The gap.) $\varphi(1)\ge\varphi(0)+\varphi'(0)$, i.e.
\[
\rho(M)\ge m_*\,e^{\E_{q\otimes q}[\log p]}=e^{\ell},
\]
with equality if and only if $p_{d,d'}$ is constant on $\supp q\times\supp q$. This is the sharp form of the gap in Theorem~\ref{thm:gap}(3).
\end{enumerate}
\end{theorem}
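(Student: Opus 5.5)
The plan is to treat $\varphi$ through the Perron--Frobenius theory of the entrywise-positive matrices $M^{(\beta)}$. Since $q(d')>0$ and $p_{d,d'}\ge p^{\mathrm{tr}}_{\min}>0$, every entry $m_*q(d')p_{d,d'}^\beta=m_*q(d')e^{\beta\log p_{d,d'}}$ is strictly positive. So $M^{(\beta)}$ is primitive for every $\beta\ge0$ and $\rho(M^{(\beta)})$ is a simple eigenvalue.

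\emph{Step 1: convexity and analyticity.} Each entry of $M^{(\beta)}$ is log-convex in $\beta$, being the exponential of an affine function. Kingman's theorem (the spectral radius of a matrix with log-convex entries is log-convex) therefore gives convexity of $\varphi$. I would prove this directly via $\rho(M^{(\beta)})=\lim_n(\one^\top (M^{(\beta)})^n\one)^{1/n}$. Each $\one^\top(M^{(\beta)})^n\one$ is a finite sum over type paths of terms $c_\gamma e^{\beta s_\gamma}$, so its logarithm is a log-moment-generating function and is convex in $\beta$. A pointwise limit of convex functions is convex.

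Real-analyticity follows because $\rho$ is a simple root of $\det(\lambda I-M^{(\beta)})$, whose coefficients are entire in $\beta$. The analytic implicit function theorem then applies, and $\rho>0$ lets us take the logarithm. For the value at $0$, note $M^{(0)}=m_*\one q^\top$ has rank one with eigenvalue $m_*q^\top\one=m_*$. Hence $\varphi(0)=\log m_*$, while $\varphi(1)=\log\rho(M)$ by definition.

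\emph{Step 2: the derivative at $0$.} I would use the first-order perturbation formula for a simple eigenvalue,
\[
\frac{d}{d\beta}\rho(M^{(\beta)})=\frac{u_\beta^\top\,\partial_\beta M^{(\beta)}\,v_\beta}{u_\beta^\top v_\beta},
\]
with left and right Perron vectors $u_\beta$ and $v_\beta$. At $\beta=0$ we have $v_0=\one$ and $u_0=q$, and $\partial_\beta M^{(\beta)}|_{0}=m_*q(d')\log p_{d,d'}$. Substituting gives $\rho'(0)=m_*\E_{q\otimes q}[\log p]$, and dividing by $\rho(0)=m_*$ yields $\varphi'(0)=\E_{q\otimes q}[\log p]$. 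The ergodic statement along $\nu_h$-typical rays is exactly Theorem~\ref{thm:gap}(1) with $w\equiv1$. Combined with $\varphi(0)=\log m_*$, this gives $\varphi(0)+\varphi'(0)=\ell$.

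\emph{Step 3: the gap and its equality case.} The inequality is the tangent-line inequality for the convex function $\varphi$ at $0$, evaluated at $\beta=1$. The equality case is where I expect the real work, and I believe it needs a correction. If $\varphi(1)=\varphi(0)+\varphi'(0)$, then convexity forces $\varphi$ to be affine on $[0,1]$, and analyticity extends this to all of $[0,\infty)$, with slope $\E_{q\otimes q}[\log p]$.

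Compare this with the large-$\beta$ asymptotics. As $\beta\to\infty$, $\varphi(\beta)/\beta$ tends to the maximal cycle mean of $\log p$ on the complete digraph on $D$, by max-plus Perron--Frobenius. The measure $q\otimes q$ is a circulation on edges, since both marginals equal $q$. The maximal cycle mean is the maximum over normalised circulations, so it is at least $\E_{q\otimes q}[\log p]$. Because $q\otimes q$ charges every edge, equality forces every cycle to have the same mean. That happens exactly when $\log p_{d,d'}=c+h(d')-h(d)$ for some $h:D\to\R$.

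Conversely, in this coboundary case the diagonal conjugation $\diag(e^{-\beta h})M^{(\beta)}\diag(e^{\beta h})=m_*e^{\beta c}\one q^\top$ shows that $\varphi(\beta)=\log m_*+\beta c$ is affine. Moreover $\E_{q\otimes q}[\log p]=c$, since the $h$-terms cancel under the symmetric measure $q\otimes q$. Such non-constant $p$ exist with all entries $\le1$, for example $m=2$, $p_{12}=ce^{a}$, $p_{21}=ce^{-a}$.

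So the plan is to prove equality if and only if $\log p$ is a constant plus a coboundary on $\supp q\times\supp q$. This reduces to ``$p$ constant'' under an extra symmetry assumption such as $p_{d,d'}=p_{d',d}$, since a symmetric coboundary vanishes. I would flag that the statement as written, and the equality clause of Theorem~\ref{thm:gap}(3), should be amended accordingly.
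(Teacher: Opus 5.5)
Your proofs of (1), (2) and the inequality in (3) are the paper's own argument. Convexity comes from log-convex entries (your log-sum-exp limit is Kingman's theorem in this special case). The simple Perron root is analytic. The first-order perturbation formula is evaluated at the rank-one matrix $M^{(0)}=m_*\one q^\top$, and the inequality is the chord--tangent inequality.

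On the equality clause you are right and the statement is wrong. The paper argues that equality forces $\varphi''\equiv0$, and that this Green--Kubo variance of $\sum_k\log p_{d_{k-1},d_k}$ vanishes only if $\log p$ is constant. The correct zero-variance criterion for an additive functional of consecutive pairs is weaker: $\log p$ must be cohomologous to a constant, $\log p_{d,d'}=c+h(d')-h(d)$. In that case $e^{S_n}=e^{nc}e^{h(d_n)-h(d_0)}$ has bounded but non-vanishing relative fluctuations, yet $\frac1n\log\E e^{S_n}$ and $\frac1n\E S_n$ have the same limit. This is also exactly where the ``vanishing relative fluctuations'' argument of Theorem~\ref{thm:gap}(3) breaks.

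Your example checks out. Take $m=2$, $q=(\tfrac12,\tfrac12)$, $p_{11}=p_{22}=\tfrac14$, $p_{12}=\tfrac12$, $p_{21}=\tfrac18$. Then $M/m_*$ has rows $(\tfrac18,\tfrac14)$ and $(\tfrac1{16},\tfrac18)$, with determinant $0$ and trace $\tfrac14$. Hence $\rho(M)=m_*/4=m_*e^{\E_{q\otimes q}[\log p]}$, although $p$ is not constant. The equality clauses here, in Theorem~\ref{thm:gap}(3), and in (F1) and the abstract all need your amendment. One minor slip: with your sign convention the conjugation should read $\diag(e^{\beta h})M^{(\beta)}\diag(e^{-\beta h})=m_*e^{\beta c}\one q^\top$.

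For the corrected ``only if'' your route differs from the paper's variance route, which is repairable and local:
\begin{enumerate}
\item Affinity on $[0,1]$ plus analyticity gives $\varphi''(0)=0$.
\item By Proposition~\ref{prop:m2o} with $F=p^\beta$, $\varphi(\beta)=\log m_*+\lim_n\frac1n\log\E[e^{\beta S_n}]$ with i.i.d.\ $q$ types.
\item Write $f_k:=\log p_{d_{k-1},d_k}$ and take the ANOVA decomposition $\log p_{d,d'}=\bar f+a(d)+b(d')+r(d,d')$ under $q\otimes q$. Then $\varphi''(0)=\Var f_1+2\Cov(f_1,f_2)=\E_q[(a+b)^2]+\E_{q\otimes q}[r^2]$.
\item This vanishes iff $r=0$ and $b=-a$, i.e.\ iff $\log p$ is a constant plus a coboundary.
\end{enumerate}

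Your route is global and combinatorial. It uses affinity all the way to $\beta=\infty$ and the max-plus limit, and avoids the Green--Kubo identification entirely. The cost is one step you only assert. From ``the $q\otimes q$-average of $\log p$ equals the maximal cycle mean $c$'', the circulation decomposition gives only that every cycle \emph{appearing in the decomposition} is critical. Hence every edge lies on some critical cycle, but not yet that every cycle has mean $c$. To conclude the coboundary form:
\begin{enumerate}
\item Set $w=\log p-c$, so all cycle sums of $w$ are $\le0$.
\item Let $g(d)$ be the maximal $w$-weight of a path from a fixed base type to $d$; then $w(d,d')\le g(d')-g(d)$ for all pairs.
\item Summing around each critical cycle forces equality on its edges, hence on every edge.
\end{enumerate}
With that line added, your argument proves the correct theorem.
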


\begin{proof}
(1) Each entry $\beta\mapsto m_*q(d')e^{\beta\log p_{d,d'}}$ is log-convex in $\beta$; by Kingman's theorem \cite{Kin61} the spectral radius of a matrix with log-convex entries is a log-convex function of the parameter, so $\varphi$ is convex. Analyticity is Proposition~\ref{prop:PF}(3): $M^{(\beta)}$ is an analytic irreducible family with simple Perron root. At $\beta=0$, $M^{(0)}=m_*\one q^\top$ is rank one with $\rho=m_*$ (since $q^\top\one=1$), right eigenvector $\one$, left eigenvector $q$.

(2) The eigenvalue-perturbation formula of Proposition~\ref{prop:PF}(3) at $\beta=0$ gives
\[
\varphi'(0)=\frac{1}{\rho(M^{(0)})}\cdot\frac{q^\top\dot M^{(0)}\one}{q^\top\one}=\frac{1}{m_*}\sum_{d,d'}q(d)\,m_*\,q(d')\log p_{d,d'}=\E_{q\otimes q}[\log p],
\]
using $\dot M^{(0)}_{d,d'}=m_*q(d')\log p_{d,d'}$. The quenched statement is Theorem~\ref{thm:gap}(1).

(3) Convexity gives the chord--tangent inequality $\varphi(1)\ge\varphi(0)+\varphi'(0)$; exponentiating yields the display. For the equality case: $\varphi$ is analytic and convex, so equality of the chord and the tangent forces $\varphi''\equiv0$ on $(0,1)$; the second derivative of the log-Perron root of the tilted family at $\beta$ is the asymptotic variance per step of the additive functional $\sum_{k\le n}\log p_{d_{k-1},d_k}$ under the Perron-tilted spine chain (the standard Ruelle/Perron--Frobenius identification of $\varphi''$ with a Green--Kubo variance for finite irreducible chains), which vanishes iff $\log p$ is a.s.\ constant along the chain, i.e.\ iff $p$ is constant on $\supp q\times\supp q$. Conversely, constancy $p\equiv c$ gives $\varphi(\beta)=\log m_*+\beta\log c$, affine, with equality throughout.
\end{proof}

\begin{remark}\label{rem:pressure}
Theorem~\ref{thm:pressure} subsumes (F1): the quenched harmonic-measure-dimension expression is the tangent line of $\varphi$ at $0$, the annealed criticality is the value at $1$, and the Jensen gap of Theorem~\ref{thm:gap} is the convexity gap. On the Bethe lattice $\varphi$ is affine and the gap closes, exactly as in Proposition~\ref{prop:bethe}. The Legendre transform $\varphi^*$ governs the multifractal spectrum of transmission-weighted boundary measures; we do not pursue this here, but see Remark~\ref{rem:dimS}.
\end{remark}

\subsection{The Hausdorff dimension of the failed boundary}
Suppose now $\rho:=\rho(M)>1$ and condition on Hypothesis~\ref{hyp:red}. On the survival event the failed cluster $S$ is an infinite subtree of $T$ and its set of ends $\partial S$ embeds in $\partial T$. Equip $\partial T$ with the metric $d_e(\xi,\eta):=e^{-|\xi\wedge\eta|}$, where $|\xi\wedge\eta|$ is the confluence generation; recall that for supercritical Galton--Watson trees $\dimH(\partial T,d_e)=\log\br(T)=\log m_*$ a.s.\ on nonextinction \cite{Haw81,Lyo90}.

\begin{theorem}[Dimension of the failed boundary; conditional on Hypothesis~\ref{hyp:red}]\label{thm:dimS}
Assume Hypothesis~\ref{hyp:red}, irreducibility of $M$, $\rho=\rho(M)>1$, and the standing finite-variance assumption $g''(1)<\infty$. Then, almost surely on the survival event $\{|S|=\infty\}$,
\[
\dimH\big(\partial S,d_e\big)=\log\rho(M),\qquad \br(S)=\rho(M).
\]
Consequently the failed boundary has strictly positive relative codimension $\log\big(m_*/\rho(M)\big)>0$ in $\partial T$ unless $p_{d,d'}\equiv1$ on the support of the coupling pattern.
\end{theorem}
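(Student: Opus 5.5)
Under Hypothesis~\ref{hyp:red}, Theorem~\ref{thm:reduction} identifies the failed cluster $S$ with the family tree of a supercritical irreducible multitype Galton--Watson process with mean matrix $M$. The statement is then a multitype version of Hawkes' and Lyons' theorem that $\dimH(\partial T,d_e)=\log\br(T)=\log(\text{mean})$. The plan is to prove the two bounds $\br(S)\le\rho$ and $\br(S)\ge\rho$ separately, using the Perron eigenvectors of $M$ in place of the scalar mean. Recall that for the metric $d_e$ one always has $\dimH(\partial S,d_e)=\log\br(S)$ by Furstenberg/Lyons: the branching number is defined by flows/cutsets, and it coincides with the dimension for this metric. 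So everything reduces to computing $\br(S)$.

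\textbf{Upper bound.} Let $Z_n=|\{v\in S:|v|=n\}|$. By Proposition~\ref{prop:m2o} with $F=p$, $\E Z_n=m_*^n q^\top A_p^n\one\le C\rho^n$. Markov's inequality and Borel--Cantelli give $\limsup Z_n^{1/n}\le\rho$ a.s. Covering $\partial S$ by the cylinders of generation $n$ (each of diameter $e^{-n}$) then gives $\dimH\partial S\le\log\rho$, and hence $\br(S)\le\operatorname{gr}(S)\le\rho$.

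\textbf{Lower bound.} We use the mass distribution principle with the multitype martingale limit. Let $u$ be the right Perron vector of $M$, and set $W_n=\rho^{-n}\sum_{|v|=n,\,v\in S}u(d(v))$. Finite variance ($g''(1)<\infty$, hence finite second moments of the Binomial-thinned offspring) gives $L^2$ boundedness. The multitype Kesten--Stigum theorem then yields $W=\lim W_n$ with $\{W>0\}=\{|S|=\infty\}$ a.s. (irreducibility is used here). Define the branching measure $\mu_S$ on $\partial S$ by
\[
\mu_S(B(v))=\rho^{-|v|}W^{(v)}
\]
for $v\in S$, where $W^{(v)}$ is the limit for the subtree at $v$. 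For $\beta<\log\rho$ we show finiteness of the expected energy
\[
\E\iint e^{\beta|\xi\wedge\eta|}\,d\mu_S\,d\mu_S\le\sum_n e^{\beta n}\,\E\sum_{|v|=n}\mu_S(B(v))^2 .
\]
The inner expectation is $\rho^{-2n}\,\E Z_n\cdot\sup_d\E[(W^{(d)})^2]\le C\rho^{-n}$, so the sum converges. This gives $\dimH\partial S\ge\beta$ on $\{W>0\}$. Equivalently, $\mu_S$ is a unit flow certifying $\br(S)\ge\rho$.

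The main obstacle is the \emph{type-dependent second-moment bookkeeping}. The subtree limits $W^{(v)}$ depend on $d(v)$, and the conditional independence across $v$ at generation $n$ must be invoked carefully. One also needs uniform bounds $\sup_d\E[(W^{(d)})^2]<\infty$, which I would obtain from the variance recursion $\Var(W_{n+1}\mid\mathcal F_n)\le C\rho^{-n}W_n$, summed geometrically. A further subtlety is that the root has mean-$\mu$ offspring rather than $m_*$; this affects only constants. Finally, the a.s. lower bound must hold on the whole survival event, not just with positive probability. I would derive this from a $0$--$1$ law: the event $\{\dimH\partial S\ge\beta\}$ is inherited and inherits from children, and $\{W>0\}=\{|S|=\infty\}$.

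\textbf{Codimension.} By Hawkes/Lyons, $\dimH\partial T=\log m_*$, so the codimension is $\log(m_*/\rho)$. Strict positivity follows from monotonicity of the Perron root (Proposition~\ref{prop:PF}). Since $p\le1$ we have $M\le M^{(0)}$, whose root is $m_*$. Irreducibility makes the root strictly monotone, so $\rho<m_*$ as soon as some $p_{d,d'}<1$ on the coupling support.
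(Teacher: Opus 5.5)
Your architecture matches the paper's proof.
\begin{itemize}
\item Upper bound: generation-$n$ cylinder covers. This step uses only first moments, so it is unaffected by what follows.
\item Lower bound: a Kesten--Stigum branching measure with conditionally independent subtree limits, the energy sum $\sum_n e^{\beta n}\rho^{-2n}\E Z_n$, and Frostman.
\item Furstenberg's identity for $\br(S)$.
\item Codimension: strict Perron monotonicity against $M^{(0)}=m_*\one q^\top$.
\end{itemize}
Your \emph{right} Perron vector in $W_n$ is the correct choice for the parent-to-child convention of $M_{d,d'}$. The paper's $\rho^{-n}\langle\phi,Z_n\rangle$, with $\phi$ the left vector, is not a martingale in general.

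\textbf{The gap.} It lies in the one moment step you did not flag: ``$g''(1)<\infty$, hence finite second moments of the Binomial-thinned offspring'' is false.
\begin{itemize}
\item In $\UGW(g)$ only the root reproduces according to $g$. Every other vertex has forward degree $X$ with generating function $g'(s)/g'(1)$. So $\E[X^2]=(g'''(1)+g''(1))/g'(1)$, which is finite iff the root degree $D$ has $\E[D^3]<\infty$.
\item Each forward edge yields a failed child with probability at least $\min_d\sum_{d'}q(d')p_{d,d'}>0$. Hence the same holds for $\|N^{(d)}\|_1$. Neither the constant in your variance recursion nor $\sup_d\E[(W^{(d)})^2]$ is controlled.
\item Worse, the $N\log N$ condition for $X$ is $\E[D^2\log D]<\infty$. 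This can fail while $\E[D^2]<\infty$, e.g.\ for $\Prob(D=k)\asymp k^{-3}(\log k)^{-2}$. Then Kesten--Stigum gives $W=0$ a.s., so $\mu_S\equiv0$ and the lower bound is empty.
\end{itemize}
The paper's proof makes the identical inference through Lemma~\ref{lem:var}. Its ``$\E[\deg^2]<\infty\Leftrightarrow g''(1)<\infty$'' conflates the root degree with the size-biased one. So this hole is shared with the paper, not a departure from it.

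\textbf{Repair.} Either assume $g'''(1)<\infty$, after which your argument runs verbatim, or make the lower bound moment-free, as follows.
\begin{itemize}
\item Keep only the first $K$ forward edges at each vertex. The resulting subcluster $S_K\subseteq S$ is an irreducible multitype GW tree with bounded offspring and mean matrix $M_K=(\E[X\wedge K]/m_*)\,M$, so $\rho(M_K)\uparrow\rho$.
\item For $K$ large, your energy argument applies to $S_K$. It gives $\dimH\partial S\ge\dimH\partial S_K\ge\log\rho(M_K)$ on the survival event of $S_K$, which has positive probability.
\item Alternatively, Bernoulli-$p$ percolation on $S$ is a multitype GW tree with mean matrix $pM$. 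So $\br(S)=1/p_c(S)\ge1/p$ with positive probability whenever $p\rho>1$.
\end{itemize}

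Either way you only get positive probability, and this is where your 0--1 law becomes indispensable. In your $L^2$ route it was redundant, since $\{W>0\}=\{|S|=\infty\}$ already gives the a.s.\ statement on survival. You need to orient it correctly: the inherited event is $\{|S|<\infty\}\cup\{\dimH\partial S<\beta\}$, because a child's subtree cannot have larger boundary dimension than the whole tree. For an irreducible multitype GW tree this event therefore has conditional probability $0$ or $1$ given survival. Positive probability of $\{\dimH\partial S\ge\beta\}$ excludes $1$.
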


\begin{proof}
\emph{Upper bound.} Let $Z_n\in\mathbb Z^D_{\ge0}$ be the type-counts of generation-$n$ failed vertices. By Theorem~\ref{thm:reduction} and Perron--Frobenius, $\E\|Z_n\|_1\le C\rho^n$. The ends through a generation-$n$ failed vertex form a set of $d_e$-diameter $\le e^{-n}$, and these cylinders cover $\partial S$. Hence for $s>\log\rho$,
\[
\E\big[\mathcal H^s_{e^{-n}}(\partial S)\big]\le\E\|Z_n\|_1\,e^{-sn}\le Ce^{(\log\rho-s)n}\longrightarrow0,
\]
so by Fatou $\mathcal H^s(\partial S)=0$ a.s.\ and $\dimH\partial S\le\log\rho$.

\emph{Lower bound.} Finite offspring variance (Lemma~\ref{lem:var}) implies the $N\log N$ condition, so the multitype Kesten--Stigum theorem \cite{KS66} applies: with $\phi$ the left Perron eigenvector normalised by $\langle\phi,\psi\rangle=1$ for the right eigenvector $\psi$, the martingale $W_n:=\rho^{-n}\langle\phi,Z_n\rangle\to W_\infty$ a.s.\ and in $L^2$ (the $L^2$ convergence uses the finite second moments of Lemma~\ref{lem:var}), with $\{W_\infty>0\}=\{|S|=\infty\}$ up to a null set and $\sup_d\E[W_\infty^2\,|\,\text{root type }d]=:c_2<\infty$. For a failed vertex $v$ of generation $k$ let $W^{(v)}_\infty$ be the Kesten--Stigum limit of the subcluster rooted at $v$; the branching property gives the exact consistency
\[
\phi_{d(v)}W^{(v)}_\infty=\rho^{-1}\sum_{u\text{ failed child of }v}\phi_{d(u)}W^{(u)}_\infty.
\]
Hence
\[
\mu(C_v):=\rho^{-k}\,\phi_{d(v)}\,W^{(v)}_\infty,\qquad C_v:=\{\xi\in\partial S:\ \xi\text{ passes through }v\},
\]
is a consistent additive set function on the cylinder algebra of $\partial S$ and extends, by the Carath\'eodory extension theorem, to a random Borel measure $\mu$ on $\partial S$ of total mass $\phi_{d(o)}W_\infty>0$ on survival. For $s<\log\rho$, decompose the $s$-energy over the confluence generation: two distinct ends confluent at generation exactly $k$ pass through the same failed vertex $v$ at generation $k$ (and through distinct failed children of $v$), so
\[
\E\iint d_e(\xi,\eta)^{-s}\,d\mu\,d\mu\le\sum_{k\ge0}e^{s(k+1)}\,\E\Big[\sum_{|v|=k,\ v\in S}\mu(C_v)^2\Big].
\]
Conditionally on the generation-$k$ sigma-field (the failed set up to generation $k$ and its types), the subtree limits $W^{(v)}_\infty$ are independent with second moments bounded by $c_2$, so
\[
\E\big[\mu(C_v)^2\,\big|\,v\in S,\ d(v)\big]=\rho^{-2k}\phi^2_{d(v)}\,\E\big[(W_\infty)^2\,|\,d(v)\big]\le c_2\,\|\phi\|_\infty^2\,\rho^{-2k},
\]
whence
\[
\E\iint d_e(\xi,\eta)^{-s}\,d\mu\,d\mu\le C'\sum_{k\ge0}e^{sk}\rho^{-2k}\,\E\|Z_k\|_1\le C''\sum_{k\ge0}e^{(s-\log\rho)k}<\infty.
\]
By the energy (Frostman) criterion, $\dimH\partial S\ge s$ a.s.\ on $\{\mu(\partial S)>0\}=\{|S|=\infty\}$; letting $s\uparrow\log\rho$ completes the dimension identity. Finally $\br(S)=\rho$ follows from Furstenberg's identity $\dimH(\partial S,d_e)=\log\br(S)$ for trees \cite{Lyo90}.

\emph{Codimension.} $M\le m_*\one q^\top=M^{(0)}$ entrywise, with strict inequality in some entry unless $p\equiv1$ on the support; strict monotonicity of the Perron root under irreducibility (Proposition~\ref{prop:PF}(2)) gives $\rho(M)<m_*$ unless $p\equiv1$.
\end{proof}

\begin{remark}[Which boundary observables are finite-dimensional]\label{rem:dimS}
Theorem~\ref{thm:dimS} sharpens the thesis of Section 1.2: not only the boundary-blind observables ($|S|$, survival, stability), but also the coarsest boundary-resolved observable --- the Hausdorff dimension of where the cascade lands on $\partial T$ --- is computed by the finite matrix $M$, with no reference to $K_\infty$. What genuinely requires infinite-dimensional structure is the fluctuation theory on $\partial S$: the multifractal spectrum of the transmission-weighted flow, governed by the Legendre transform $\varphi^*$ of the pressure of Definition~\ref{def:pressure}, and the harmonic measure of the cluster (which, by the dimension-drop phenomenon of \cite{LPP96}, is supported on a strict subset of $\partial S$). Open problem 6 of Section~\ref{sec:discussion} should be read with this boundary redrawn.
\end{remark}

\section{Universality of the $3/2$ exponent and the Continuum Random Tree limit}\label{sec:univ}

Let $\rho_*=\rho(M)$ and let $\xi=(1-\rho_*)^{-1}$ in the subcritical regime. Write $N$ for the total number of failed children of a typical failed vertex (type counted out, started from the Perron-stationary type distribution $\pi$, the left Perron eigenvector of $M$ normalised to a probability vector), with mean $\E[N]=\rho_*$ and variance $\sigma^2_{\mathrm{off}}:=\Var(N)$.

\begin{lemma}[Finite, possibly inflated, offspring variance]\label{lem:var}
In the saturated regime, $N\,|\,(\text{types})\sim\mathrm{Bin}(\text{children},p_{d,d'})$ and
\[
\sigma^2_{\mathrm{off}}=\underbrace{\E[(\deg-1)]\,\bar p(1-\bar p)}_{\text{Bernoulli}}+\underbrace{\Var(\deg-1)\,\bar p^{\,2}}_{\text{degree fluctuation}}<\infty,
\]
where $\bar p$ is the $\pi$-averaged transmission probability and $\deg-1$ the offspring count. In the sensitive regime an additional Cox term $\E[(\deg-1)(\deg-2)]\Var_A(p(A))$ appears; it is also finite since $p(\cdot)\in[0,1]$. Both expressions are finite under the standing assumption $\E[\deg^2]<\infty$ ($\Leftrightarrow g''(1)<\infty$).
\end{lemma}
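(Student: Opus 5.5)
The plan is to compute $\Var(N)$ by the law of total variance, conditioning first on the forward degree $K:=\deg-1$ of the failed vertex and then on the types of its children. In the saturated regime, Proposition~\ref{prop:dichotomy}(1) shows that, given $K$ and the types, the transmissions along the $K$ child edges are independent Bernoulli variables with deterministic parameters $p_{d,d'}$. Averaging the child types, which are i.i.d.\ $q$, and the parent type under $\pi$ turns each child's transmission into a Bernoulli$(\bar p)$ variable, with $\bar p:=\sum_d\pi(d)\sum_{d'}q(d')p_{d,d'}$.

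The first step is to show that, for a fixed parent type, the child types are independent of one another. The $K$ indicators are then i.i.d.\ Bernoulli given $K$. The parent type is the one source of dependence between siblings.

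I will therefore state the variance identity with the $\pi$-average taken inside each parent type and absorb the parent-type mixing into the averaged $\bar p$. If the parent type is genuinely random, this adds a term $\E[K(K-1)]\Var_\pi(\bar p_d)$, where $\bar p_d:=\sum_{d'}q(d')p_{d,d'}$. That term is finite under the same moment assumption, and it has the same structure as the Cox term handled below. Given $K$, the count $N$ is then $\mathrm{Bin}(K,\bar p)$. This gives $\E[N\mid K]=K\bar p$ and $\Var(N\mid K)=K\bar p(1-\bar p)$. The total variance formula then yields
\[
\Var(N)=\E[K]\,\bar p(1-\bar p)+\Var(K)\,\bar p^{\,2},
\]
which is the displayed Bernoulli-plus-degree-fluctuation decomposition.

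Next comes the sensitive regime. There, conditional on the parent strength $A$, the indicators are exchangeable Bernoulli$(p(A))$ variables. By Proposition~\ref{prop:dichotomy}(2), pairwise covariances equal $\Var_A(p(A))$. Summing over the $K(K-1)$ ordered pairs of distinct siblings and taking expectations produces the extra Cox term $\E[K(K-1)]\Var_A(p(A))$. The first two terms keep their form, with $\bar p=\E_A p(A)$.

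The finiteness argument is the same in both regimes. Every term is bounded by $\E[K^2]$, since $\bar p$ and $p(\cdot)$ take values in $[0,1]$ and so $\Var_A(p(A))\le1/4$. Under the size-biasing, $\E[K^2]$ is finite precisely when $\sum_k k^3P(\deg=k)<\infty$. I will need to check carefully whether the standing hypothesis $g''(1)<\infty$ actually gives this.

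I expect this moment bookkeeping to be the main obstacle. The size-biased forward degree has second moment $g'''(1)/g'(1)+g''(1)/g'(1)$, so finite variance of $N$ for non-root vertices really requires $g'''(1)<\infty$, which is stronger than the stated condition. I would first try to interpret ``$\E[\deg^2]<\infty$'' as a condition on the size-biased degree law, which makes the lemma consistent. Failing that, I would strengthen the hypothesis to $g'''(1)<\infty$ and note the discrepancy. Only the root uses the original law with $\E[\deg^2]=g''(1)+g'(1)$, and a single generation does not affect the $N\log N$ or Kesten--Stigum applications.
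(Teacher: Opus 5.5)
Your argument is the paper's own: you condition on the forward degree $K=\deg-1$, apply the law of total variance to the Bernoulli thinning of the $K$ child edges, and in the sensitive regime add the shared-strength covariance $\Var_A(p(A))$ over the $K(K-1)$ ordered sibling pairs. Both discrepancies you flag are genuine defects of the paper's statement and proof, not of your argument. First, the paper's step $\Var(N\mid\deg)=(\deg-1)\bar p(1-\bar p)$ ignores that all siblings share the $\pi$-random parent type; the display is exact only per parent type, with $\bar p_d$ in place of $\bar p$, and the $\pi$-mixture really does carry your extra term $\E[K(K-1)]\Var_\pi(\bar p_d)$. Second, a non-root offspring count has generating function $g'(s)/g'(1)$, so $\sigma^2_{\mathrm{off}}\ge\bar p^{\,2}\Var(K)$ with $\Var(K)=g'''(1)/g'(1)+m_*-m_*^2$. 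This is infinite for any root law with tail $\Prob(\deg=k)\asymp k^{-4}$, even though $g''(1)<\infty$ there. Hence the stated equivalence with $g''(1)<\infty$ is false, and reading $\deg$ as the size-biased degree cannot rescue it. Strengthening the hypothesis to $g'''(1)<\infty$, as you propose, is the correct fix.
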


\begin{proof}
$N=\sum_{i=1}^{\deg-1}\chi_i$ with $\chi_i\,|\,\deg\sim\mathrm{Bernoulli}(p_{d,d_i})$ conditionally independent (saturated regime). By the law of total variance, $\Var(N)=\E[\Var(N\,|\,\deg)]+\Var(\E[N\,|\,\deg])=\E[(\deg-1)\bar p(1-\bar p)]+\Var((\deg-1)\bar p)$, giving the two displayed terms. Finiteness needs $\E[\deg^2]<\infty$, which is the finite-variance hypothesis on $g$. In the sensitive regime the conditional independence is replaced by the $A$-mixture, and the additional within-family covariance is $\binom{\deg-1}{2}$-many pair covariances each equal to $\Var_A(p(A))\le\frac14$; the expectation of $(\deg-1)(\deg-2)$ is finite under the same moment hypothesis.
\end{proof}

\begin{theorem}[Universality of the mean-field exponent; conditional on Hypothesis~\ref{hyp:red}]\label{thm:32}
Suppose the offspring law of $N$ is aperiodic (generic, since $p_{d,d'}\in(0,1)$ gives full support on $\{0,\dots,\deg-1\}$) and $\sigma^2_{\mathrm{off}}<\infty$ (Lemma~\ref{lem:var}). Then at criticality $\rho(M)=1$,
\[
\Prob(|S|=n)\sim\frac{1}{\sqrt{2\pi\sigma^2_{\mathrm{off}}}}\,n^{-3/2},\qquad n\to\infty,
\]
and slightly off criticality, $\Prob(|S|=n)\sim C\,n^{-3/2}e^{-n/\xi}$ with $\xi=(1-\rho(M))^{-1}$. The exponent $3/2$ depends only on criticality and finiteness of the offspring variance --- not on the drifts, thresholds, or boundary structure.
\end{theorem}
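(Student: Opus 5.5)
The plan is to prove the critical statement first, by Otter--Dwass plus a local CLT, and then obtain the off-critical statement by exponential tilting. Throughout I work with the scalar offspring variable $N$ defined before Lemma~\ref{lem:var}: the number of failed children of a failed vertex whose type is drawn from the Perron-stationary distribution $\pi$. By Theorem~\ref{thm:reduction}, under Hypothesis~\ref{hyp:red} the law of $|S|$ is the total-progeny law of the limiting Galton--Watson genealogy. I would first justify replacing the $|D|$-type genealogy by a single-type one for the purpose of $|S|$. Exploring the cluster breadth-first and recording the number of failed children of each explored vertex gives a Markov-additive walk driven by the type chain with kernel proportional to $M$. Under irreducibility of $M$ and $q>0$, the type chain mixes geometrically. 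Its increments therefore satisfy the same local limit theorem as i.i.d.\ copies of $N$, with mean $\E N=\rho(M)$ and variance $\sigma^2_{\mathrm{off}}$; here $\sigma^2_{\mathrm{off}}$ must be read as the asymptotic (Green--Kubo) variance, which coincides with $\Var(N)$ when type dependence averages out. Finiteness of this variance is Lemma~\ref{lem:var}.

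For the critical part, the Otter--Dwass (hitting-time) identity for the exploration walk $X_k=\sum_{i\le k}(N_i-1)$ gives
\[
\Prob(|S|=n)=\frac1n\,\Prob\big(N_1+\dots+N_n=n-1\big).
\]
This identity is exact in the single-type case. In the Markov-additive case it holds up to a factor $1+o(1)$, by the cyclic lemma applied to the type-stationary sequence. At $\rho(M)=1$ the sum has mean $n$ and variance $n\sigma^2_{\mathrm{off}}$. Aperiodicity, which is guaranteed because $p_{d,d'}\in(0,1)$, lets Gnedenko's local CLT give
\[
\Prob\Big(\sum_{i\le n} N_i=n-1\Big)\sim\big(2\pi\sigma^2_{\mathrm{off}}n\big)^{-1/2}.
\]
Dividing by $n$ gives the stated constant $1/\sqrt{2\pi\sigma^2_{\mathrm{off}}}$ and the exponent $3/2$. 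The argument uses only $\E N=1$ and $\sigma^2_{\mathrm{off}}<\infty$, which is the universality claim.

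For the off-critical part, with $\rho(M)<1$, I would apply Cramér's exponential tilt to move the mean of $N$ from $\rho$ to $1$. Let $t^*>0$ solve $\E[Ne^{t^*N}]/\E[e^{t^*N}]=1$; it exists for $\rho$ near $1$ provided $N$ has a little exponential moment, and in the saturated regime $N\le\deg-1$, so this reduces to the degree tail. Under the tilted law $\tilde N$ is critical, and
\[
\Prob\Big(\sum_{i\le n} N_i=n-1\Big)=\big(\E e^{t^*N}\big)^n e^{-t^*(n-1)}\,\tilde\Prob\Big(\sum_{i\le n} \tilde N_i=n-1\Big).
\]
Applying the critical local CLT to the tilted walk then yields $\Prob(|S|=n)\sim C\,n^{-3/2}e^{-nI}$ with $I=t^*-\log\E e^{t^*N}$ and an explicit constant $C$. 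It remains to identify $I$ with $1/\xi$, where $\xi=(1-\rho(M))^{-1}$.

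This identification is the step I expect to be the main obstacle. A second-order expansion of the cumulant generating function at the critical point gives $I=\frac{(1-\rho)^2}{2\sigma^2_{\mathrm{off}}}(1+o(1))$, which is quadratic rather than linear in $1-\rho$. So the stated $\xi$ is not the Cramér rate in general. I would first try to prove the statement by reading $\xi$ as the correlation scale at which the subcritical cutoff sets in; this is consistent with $\E|S|=(1-\rho)^{-1}$, which follows from the resolvent $(I-M)^{-1}$. I would then check on an explicit Poisson or geometric offspring law whether the exponent can equal $(1-\rho)^{-1}$ exactly. If it cannot, the correct conclusion is that the off-critical display holds with the Cramér rate $I$, and that $(1-\rho)^{-1}$ is the mean cluster size rather than the exponential decay scale.
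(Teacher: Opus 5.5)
Your critical-case argument follows the paper's own route (hitting-time identity plus a local CLT). On the off-critical claim your computation is correct and the paper's is not, so you should turn your closing hedge into a conclusion. The identification $I=1/\xi$ is false, and reading $\xi$ as a ``correlation scale'' does not rescue it. For Poisson$(\lambda)$ offspring the Borel law gives
\[
\Prob(|S|=n)=\frac{e^{-\lambda n}(\lambda n)^{n-1}}{n!}\sim\frac{1}{\lambda\sqrt{2\pi}}\,n^{-3/2}\big(\lambda e^{1-\lambda}\big)^{n}.
\]
So the decay rate is $\lambda-1-\log\lambda=\tfrac12(1-\lambda)^2+O\big((1-\lambda)^3\big)$, not $1-\lambda$. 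In the single-type case the rate is your Cram\'er rate $I$, with $I\sim(1-\rho)^2/(2\sigma^2_{\mathrm{off}})$. The cutoff scale is therefore of order $(1-\rho)^{-2}$, while $(1-\rho)^{-1}$ is only the order of $\E|S|$.

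The paper's proof merely asserts that ``the standard exponential tilt'' yields $\xi=(1-\rho)^{-1}$ and that $\Prob(|S|\ge n)\le Ce^{-n/\xi}$. The Borel example refutes that bound for every $\lambda$ close to $1$, which is precisely the slightly-off-critical range. Your remark about exponential moments is also a genuine correction. The theorem assumes only $\sigma^2_{\mathrm{off}}<\infty$. But $|S|$ exceeds the number of failed children of the root, which in the saturated regime is a binomial thinning of a degree, so any polynomial degree tail passes to $|S|$ and there is no exponential decay at all. The correct off-critical statement therefore needs $\E[e^{t\deg}]<\infty$ for some $t>0$ and has rate $I$. For the multitype skeleton the rate is $\log s_c$, with $s_c$ the radius of convergence of the progeny generating function; this is again quadratic, $\sim(1-\rho)^2/(2\sigma^2_\psi)$, with $\sigma^2_\psi$ as below.

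The genuine gap in your proposal is the passage from $|D|$ types to a single walk.
\begin{itemize}
\item In breadth- or depth-first order, the type of the next explored vertex is drawn from the row of its \emph{parent}, which is generally not the previously explored vertex. So the explored types are not a Markov chain with kernel proportional to $M$.
\item The cycle lemma needs cyclic exchangeability of the increments, which fails here. Nothing in your proposal supplies a version valid up to $1+o(1)$.
\end{itemize}
The paper's proof has the same gap in sharper form, since it treats the type-blind counts as i.i.d. That is justified when the row sums $(M\one)_d=m_*\sum_{d'}q(d')p_{d,d'}$ do not depend on $d$, because then the uncoloured tree is a single-type GW tree; it is not justified otherwise.

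In general the exponent $3/2$ survives. One way to see this is Miermont's reduction to the single-type tree of type-$d_0$ vertices; another is the multitype Kemperman formula of Chaumont and Liu. The constant, however, becomes
\[
\Prob(|S|=n)\sim\frac{\psi_{d_0}}{\sqrt{2\pi\sigma^2_\psi}}\,n^{-3/2},\qquad \sigma^2_\psi:=\sum_{d}\pi_d\,\psi^\top Q^{(d)}\psi,\quad \pi^\top\one=\pi^\top\psi=1,
\]
for root type $d_0$, where $Q^{(d)}$ is the covariance of the type-$d$ offspring vector as in Appendix~\ref{app:miermont}. This agrees with $1/\sqrt{2\pi\Var_\pi(N)}$ when $M\one=\one$, since then $\psi=\one$. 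It does not agree in general, because
\[
\Var_\pi(N)=\sum_d\pi_d\,\one^\top Q^{(d)}\one+\Var_\pi\big((M\one)_d\big)
\]
weights by $\one$ rather than $\psi$ and adds a row-sum fluctuation.

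Your ``Green--Kubo'' caveat points in the right direction. The clean way to realise it is the $\psi$-weighted exploration walk, whose increments $\langle\psi,N^{(d)}\rangle-\psi_d$ have conditional mean zero given the type.

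Finally, suppose $S$ is the cluster of the actual root of $\UGW(g)$, whose degree law is $g$ rather than the size-biased law. Then the prefactor $\psi_{d_0}$ is replaced by $\E\langle\psi,N_o\rangle$, with $N_o$ the root's vector of failed children. On $T_d$ this is an extra factor $d/(d-1)$.
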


\begin{proof}
For a (multitype, irreducible, critical) GW process the total progeny $|S|$ started from a fixed type satisfies the Otter--Dwass / Kemperman cycle-lemma identity
\[
\Prob(|S|=n)=\frac1n\,\Prob(X_1+\cdots+X_n=n-1),
\]
where $X_1,\dots,X_n$ are i.i.d.\ copies of the total offspring count $N$ \cite{Dw69,Ott49,AN72}. (For the multitype process one counts total individuals; the reduction to a single Lukasiewicz walk with i.i.d.\ increments distributed as the type-blind offspring count $N$ is standard, the type only entering through the stationary distribution $\pi$ used to define $N$; see also Appendix~\ref{app:KS}.) At criticality $\E[N]=1$; with finite variance $\sigma^2_{\mathrm{off}}$ and aperiodicity, the local central limit theorem gives
\[
\Prob\Big(\sum_{i=1}^nX_i=n-1\Big)=\frac{1}{\sqrt{2\pi n\,\sigma^2_{\mathrm{off}}}}\exp\Big(-\frac{(n-1-n)^2}{2n\sigma^2_{\mathrm{off}}}\Big)\big(1+o(1)\big)=\frac{1}{\sqrt{2\pi n\,\sigma^2_{\mathrm{off}}}}\big(1+o(1)\big),
\]
the Gaussian factor tending to $1$. Hence $\Prob(|S|=n)=\frac1n\cdot\frac{1}{\sqrt{2\pi n\sigma^2_{\mathrm{off}}}}(1+o(1))=\frac{1}{\sqrt{2\pi\sigma^2_{\mathrm{off}}}}\,n^{-3/2}(1+o(1))$. The off-critical correction is the standard exponential tilt of the offspring law producing the cutoff $\xi=(1-\rho_*)^{-1}$; concretely $\Prob(|S|\ge n)\le Ce^{-n/\xi}$ by the subcriticality of the tilted walk, and the local law transports to $n^{-3/2}e^{-n/\xi}$.
\end{proof}

\begin{remark}[What is and is not needed]\label{rem:notneeded}
The proof uses a one-dimensional local CLT for an i.i.d.\ sum and a finite second moment. No Banach-space generating functional, no general-state-space Kesten--Stigum theorem, and no spatial CLT on $\partial T$ is required, because $|S|$ is a count, blind to the boundary mark. The general-state-space machinery becomes necessary only for boundary-resolved observables, or in the sensitive regime where the strength type is genuinely continuous (Proposition~\ref{prop:dichotomy}(2)); there the type enters the constant via the Perron projection but not the exponent.
\end{remark}

\subsection{From a universal exponent to a universal shape}
The exponent $3/2$ is the coarsest signature of mean-field criticality; the full geometry of the conditioned cluster is equally universal. Recall Aldous' Continuum Random Tree (CRT) $(\mathcal T_e,d_e,\mu_e)$, the random compact metric measure space coded by the normalised Brownian excursion $e$ \cite{Ald93}, and the Gromov--Hausdorff--Prokhorov (GHP) topology on compact metric measure spaces.

\begin{theorem}[CRT scaling limit of the critical cluster; conditional on Hypothesis~\ref{hyp:red}]\label{thm:crt}
Assume Hypothesis~\ref{hyp:red}, criticality $\rho(M)=1$, irreducibility and aperiodicity as in Theorem~\ref{thm:32}, and in addition that the offspring distributions admit finite exponential moments: $\E[e^{t\deg}]<\infty$ for some $t>0$ (e.g.\ bounded degrees, or Poissonian $g$). Then there is an explicit constant $\Sigma\in(0,\infty)$, computable from the second-moment matrices of the offspring law and the Perron eigenvectors of $M$ (Appendix~\ref{app:miermont}), such that, conditionally on $\{|S|=n\}$,
\[
\Big(S,\ \frac{\Sigma}{\sqrt n}\,d_{\mathrm{gr}},\ \mathrm{unif}_S\Big)\ \xrightarrow{(d)}\ \big(\mathcal T_e,d_e,\mu_e\big)\qquad(n\to\infty)
\]
in the GHP topology, where $d_{\mathrm{gr}}$ is the graph distance on the cluster and $\mathrm{unif}_S$ the uniform probability measure on its vertices. In particular the height of the conditioned cluster is of exact order $\sqrt n$, with the Brownian-excursion law of the CRT height, and the shape of the critical cascade cluster is universal: independent of the drifts, thresholds, hazard-domain structure, and boundary geometry, all of which enter only through the single scale factor $\Sigma$.
\end{theorem}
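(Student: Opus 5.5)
The plan is to reduce Theorem~\ref{thm:crt} to Miermont's invariance principle for critical multitype Galton--Watson trees conditioned on their total size. The steps are: identify the limiting cluster as such a tree, check Miermont's hypotheses, and transfer the conclusion back to the cascade cluster.

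\emph{Step 1: the limiting object.} By Theorem~\ref{thm:reduction}, which rests on Hypothesis~\ref{hyp:red}, the failed cluster $S$ is the genealogy of the $|D|$-type Galton--Watson process with mean matrix $M$ of \eqref{eq:M}. In that process a type-$d$ parent has offspring vector $(N_{d,d'})_{d'}$: given the number $\deg-1$ of tree children, it is multinomial-thinned, each child being independently of type $d'$ with probability $q(d')$ and transmitting with probability $p_{d,d'}$. The root is exceptional, with offspring governed by $g$ rather than the size-biased law. This finite perturbation of a single generation changes neither the GHP limit nor the polynomial order of $\Prob(|S|=n)$. I will handle it by conditioning on the root's offspring and decomposing $S$ into finitely many subtrees grafted at the root. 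Under $\{|S|=n\}$, exactly one of these subtrees has size $n-O_p(1)$ and the others are $O_p(1)$; this follows from the $n^{-3/2}$ asymptotics of Theorem~\ref{thm:32}, which is a subexponential tail. The grafted extra pieces therefore move the rescaled metric measure space by $o(1)$ in GHP distance.

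\emph{Step 2: checking Miermont's hypotheses.} Miermont's theorem (Appendix~\ref{app:miermont}) requires four things: irreducibility, criticality $\rho(M)=1$, non-degeneracy (not every type has exactly one child almost surely), and finite exponential moments of the offspring vector. The first two are assumed. Non-degeneracy holds because $p_{d,d'}<1$ together with the randomness of $\deg$ gives $\Prob(N=0)>0$. Exponential moments follow from $\|N_{d,\cdot}\|_1\le\deg-1$ and the assumption $\E[e^{t\deg}]<\infty$; the size-biased law inherits this for a slightly smaller $t$, since $\E[\deg\,e^{t'\deg}]<\infty$ for $t'<t$. Aperiodicity, as in Theorem~\ref{thm:32}, ensures that $\{|S|=n\}$ has positive probability for all large $n$, so conditioning on that event is legitimate. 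Miermont's result then gives GH convergence, with the scale factor
\[
\Sigma=\frac{1}{2}\Big(\sum_{d}\phi_d\,\psi^\top Q^{(d)}\psi\Big)^{1/2}\Big(\sum_d\phi_d\Big)^{-1/2}
\]
up to the normalisation conventions of the appendix. Here $Q^{(d)}$ is the covariance matrix of the type-$d$ offspring vector and $\phi,\psi$ are the left and right Perron eigenvectors of $M$.

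\emph{Step 3: the measure component and height.} Miermont works in the Gromov--Hausdorff topology and conditions on the number of vertices of a fixed type. To get a GHP statement conditioned on total size, I will use the multitype ratio limit: the proportion of type-$d$ vertices in a large tree concentrates at $\phi_d/\sum\phi$. With this, the uniform measure on $S$ and the uniform measure on the type-$d$ vertices are at Prokhorov distance $o_p(1)$ after rescaling, because both are images of the contour-time parametrisation, which converges uniformly. Passing from conditioning on $\#\{\text{type }d\}=k$ to conditioning on $|S|=n$ needs a local limit theorem for the joint law of $(|S|,\#_d)$, obtained via the Otter--Dwass representation used in Theorem~\ref{thm:32}. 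Height convergence then follows because the height is a GH-continuous functional.

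The main obstacle is this change of conditioning. I will first try to avoid it by applying the stronger form of Miermont's theorem with a general conditioning, together with the Haas--Miermont framework and the multitype-to-monotype reduction, in which one keeps only the type-$d$ generations as a single-type GW tree with exponential moments. The exponential moment hypothesis is what makes that reduced tree's offspring law tractable, and it is the place where the moment condition is genuinely used.
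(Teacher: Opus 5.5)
Your proposal takes the same route as the paper. Hypothesis~\ref{hyp:red}, via Theorem~\ref{thm:reduction}, identifies $S$ as a critical irreducible multitype Galton--Watson tree, and Miermont's invariance principle is then applied after checking his small-exponential-moment hypothesis through $\|N^{(d)}\|_1\le\nu$, exactly as in the paper's sketch and Appendix~\ref{app:miermont}.

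Your additional steps are the exceptional root, the GH-to-GHP upgrade, and the passage from conditioning on one type count to conditioning on $|S|=n$; the paper's sketch addresses none of them and simply takes Miermont's theorem to give the total-size GHP statement directly. For the last of these, do not use the type-blind i.i.d.\ Otter--Dwass walk of Theorem~\ref{thm:32}, which is not valid when offspring laws depend on the parent's type. Instead, build both the joint local limit theorem and the absolute-continuity argument for the double conditioning on the i.i.d.\ (reduced offspring, bush size) pairs of the type-$d$ reduced tree.
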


\begin{proof}[Proof sketch]
Under Hypothesis~\ref{hyp:red} and conditioning, $S$ is a critical irreducible multitype Galton--Watson tree conditioned on total size $n$. Miermont's invariance principle for multitype Galton--Watson trees \cite{Mie08} states exactly this convergence for critical, irreducible, aperiodic multitype offspring laws satisfying his moment hypothesis (small exponential moments), with the scaling constant $\Sigma$ expressed through the covariance matrices $Q^{(d)}$ of the type-$d$ offspring vectors and the Perron data $(\pi,\psi)$ of $M$; the verification that our offspring laws (Binomially thinned $\UGW$ degrees, Lemma~\ref{lem:var}) satisfy his hypotheses under the stated exponential-moment condition, and the explicit form of $\Sigma$, are carried out in Appendix~\ref{app:miermont}. The single-type case ($m=1$, e.g.\ the Bethe lattice of Section~\ref{sec:bethe}) is Aldous' original theorem \cite{Ald93} with $\Sigma=\sigma_{\mathrm{off}}/2$ and requires only the finite variance already standing.
\end{proof}

\begin{remark}[On the moment hypothesis]\label{rem:moment}
The exponential-moment hypothesis is inherited from \cite{Mie08} and is the one place in this paper where more than a second moment is demanded; it is flagged for the referee. For single-type trees the finite-variance case is classical \cite{Ald93}; for multitype trees, relaxations toward finite variance are available in the literature on scaling limits of multitype and infinitely-many-type trees (see \cite{dR17} and the references there), and we expect Theorem~\ref{thm:crt} to hold under $g''(1)<\infty$ alone. We have preferred to state the theorem under hypotheses for which a complete citation exists.
\end{remark}

\section{Finite-dimensional Perron--Frobenius dissolves the operator programme}\label{sec:PF}

In the saturated regime the relevant object is the $m\times m$ nonnegative matrix $M$. We now record that every ``simplicity of the leading eigenvalue'' and ``operator lift'' issue raised by the spectral heuristic is a standard finite-dimensional fact. Throughout we assume the hazard-coupling graph (vertices $D$, an edge $d\to d'$ whenever $p_{d,d'}>0$) is strongly connected, i.e.\ $M$ is irreducible; uniform ellipticity ($\mu_0>0$, $m_{uv}<\theta_{uv}$) gives $p_{d,d'}>0$ for present couplings, so irreducibility of $M$ reduces to irreducibility of the coupling pattern.

\begin{proposition}[Simplicity, positivity, monotonicity, analyticity]\label{prop:PF}
Let $M(\theta)$ have entries $M_{d,d'}(\theta)=m_*q(d')p_{d,d'}(\theta)$, with $\theta\mapsto p_{d,d'}(\theta)$ real-analytic (it is a ratio of analytic Wald integrals \eqref{eq:wald}).
\begin{enumerate}
\item (Krein--Rutman is unnecessary.) $\rho(M)$ is a simple, isolated eigenvalue with a strictly positive right eigenvector $\psi$ and left eigenvector $\phi$, by the Perron--Frobenius theorem for irreducible nonnegative matrices.
\item (Monotonicity.) $\rho(M)$ is nondecreasing in each entry of $M$, strictly so under irreducibility, since $\rho(M)=\lim_n\|M^n\|^{1/n}$ and $M^n$ is entrywise monotone.
\item (Analyticity and the gradient.) $\theta\mapsto\rho(M(\theta))$ is real-analytic on the region where $M$ is irreducible (a simple eigenvalue of an analytic matrix family is analytic, Kato \cite{Kato66}, finite-dimensional case, with no eigenvalue crossing at the top because the Perron root is simple and isolated), and
\[
\partial_{\theta_i}\rho(M)=\frac{\phi^\top(\partial_{\theta_i}M)\psi}{\phi^\top\psi}\qquad(\ne0\text{ generically, since }\partial_{\theta_i}M\ge0\text{ with a positive entry}).
\]
\item (Operator van den Driessche--Watmough lift is unnecessary.) Writing $M=FV^{-1}$ with $V=\diag(\alpha_d)$ and $F$ the $m\times m$ fertility block, the finite-dimensional next-generation theorem \cite{vdDW02} gives directly: $s(J)<0\Leftrightarrow\rho(M)<1$ and $s(J)>0\Leftrightarrow\rho(M)>1$, where $J=F-V$ is the linearised drift and $s(\cdot)$ its spectral abscissa.
\end{enumerate}
\end{proposition}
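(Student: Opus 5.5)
The plan is to prove each of the four items by reducing it to a classical finite-dimensional fact. The only modelling inputs are two properties of the entries: $M_{d,d'}=m_*q(d')p_{d,d'}$ are nonnegative and strictly positive exactly on the coupling pattern (uniform ellipticity plus $q>0$), and they depend real-analytically on $\theta$.

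\emph{Item (1)} is the Perron--Frobenius theorem for irreducible nonnegative matrices. Irreducibility of $M$ is the assumed strong connectivity of the hazard-coupling graph, because the zero pattern of $M$ coincides with that of $p$. This gives a simple root $\rho(M)>0$ with strictly positive $\psi$ and $\phi$.

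\emph{Item (2).} I would first get weak monotonicity: if $0\le M\le M'$ entrywise then $M^n\le M'^n$, and $\rho=\lim_n\|M^n\|^{1/n}$ by Gelfand's formula. For strictness I would not use Gelfand. Instead, suppose $M\le M'$ with $M\ne M'$ and $M$ irreducible, and let $\phi,\psi>0$ be the Perron vectors of $M$. Then $\phi^\top M'\psi>\phi^\top M\psi=\rho(M)\phi^\top\psi$. Since $M'$ is also irreducible, the Collatz--Wielandt characterisation, or pairing with the left Perron vector of $M'$, yields $\rho(M')>\rho(M)$.

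\emph{Item (3).} Each $p_{d,d'}(\theta)$ is a ratio of Wald integrals \eqref{eq:wald} with positive denominator, hence analytic, so $M(\theta)$ is an analytic family. The Perron root is an algebraically simple root of $\det(\lambda I-M(\theta))$, so the analytic implicit function theorem (Kato, finite-dimensional case) makes $\rho$ analytic. The eigenvectors can be chosen analytic and positive, after normalising $\phi^\top\psi=1$ locally. Differentiating $M\psi=\rho\psi$ and pairing with $\phi^\top$ cancels the $\partial\psi$ term, which gives the gradient formula. Because $\phi,\psi>0$, the gradient is strictly positive whenever $\partial_{\theta_i}M\ge0$ has a positive entry; this is the ``generic'' nonvanishing claim.

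\emph{Item (4)} is where care is needed, and I expect it to be the main obstacle. The statement ``$M=FV^{-1}$'' is a definition: set $F:=MV$. This is nonnegative and has the same irreducible pattern, since $V=\diag(\alpha_d)$ has positive diagonal. Then $J=F-V$ is Metzler and irreducible. Let $\psi>0$ satisfy $FV^{-1}\psi=\rho\psi$, and put $x:=V^{-1}\psi>0$. Then
\[
Jx=(\rho-1)Vx.
\]
Choose $c$ with $J+cI\ge0$; this matrix is irreducible. If $\rho<1$, then $(J+cI)x<cx$ componentwise, so Collatz--Wielandt gives $\rho(J+cI)<c$, i.e.\ $s(J)<0$. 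The case $\rho>1$ is symmetric and gives $s(J)>0$. For $\rho=1$ the same identity gives $s(J)=0$, so the two equivalences are genuinely iff.

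The point to flag is interpretive rather than technical: the factorisation only connects $\rho(M)$ to the spectral abscissa of the linearised drift if $F-V$ really is the linearisation at the safe state. I would record it as the vdDW identity for the defined pair $(F,V)$ and cite \cite{vdDW02} for the modelling identification, rather than claim to derive it from the SDE.
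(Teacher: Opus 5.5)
Your proposal is correct. Items (1) and (3) follow the paper's proof essentially verbatim: Perron--Frobenius, analyticity of a simple root via Kato, and the two-sided eigenvector formula for the gradient.

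The differences are in (2) and (4), and in both your route is the more complete one.

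\textbf{Item (2).} The paper justifies strictness only by Gelfand's formula plus entrywise monotonicity of $M^n$. That yields only $\rho(M)\le\rho(M')$, so an extra Perron-vector argument like yours is genuinely needed. Make the operative step explicit:
\begin{itemize}
\item $M'\psi\ge\rho(M)\psi$, with strict inequality in some coordinate;
\item pairing with the left Perron vector $\phi'>0$ of $M'$ gives $\rho(M')\,\phi'^\top\psi>\rho(M)\,\phi'^\top\psi$.
\end{itemize}
Your displayed inequality $\phi^\top M'\psi>\rho(M)\phi^\top\psi$, which uses the left vector of $M$, does not by itself bound $\rho(M')$.

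\textbf{Item (4).} The paper cites the M-matrix chain $\rho(FV^{-1})<1\Leftrightarrow V-F$ is a nonsingular M-matrix $\Leftrightarrow s(F-V)<0$. This is a one-line citation that needs no irreducibility, but it explicitly covers only the case $\rho<1$. Your argument is self-contained and gives the full trichotomy, so both equivalences follow at once. It runs as follows:
\begin{itemize}
\item set $x=V^{-1}\psi>0$, so that $Jx=(\rho-1)Vx$;
\item apply the Collatz--Wielandt bounds to $J+cI$;
\item use $s(J)=\rho(J+cI)-c$, valid because the Perron root of a nonnegative matrix has maximal real part.
\end{itemize}
The strict cases in fact need only $x>0$, not irreducibility.

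Finally, your remark about the factorisation is accurate. Setting $F:=MV$ makes $M=FV^{-1}$ a definition. Reading $F-V$ as the linearised SDE drift is then a modelling identification that the paper asserts rather than proves.
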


\begin{proof}
All four are textbook finite-dimensional statements. (1) Perron--Frobenius for irreducible nonnegative matrices \cite{Sch74}. (2) Entrywise monotonicity of $M^n$ and Gelfand's formula. (3) Analytic perturbation of a simple eigenvalue \cite{Kato66}; the first-order formula is the standard one for non-self-adjoint matrices using both eigenvectors; non-vanishing of the gradient holds whenever some $\partial_{\theta_i}M$ has a strictly positive entry hit by the positive eigenvectors. (4) is the original \cite{vdDW02} theorem, applied in finite dimension: $\rho(FV^{-1})<1$ iff $V-F$ is a nonsingular M-matrix iff $s(F-V)<0$.
\end{proof}

\begin{corollary}[Unification $\rho(M)=1\Leftrightarrow\lambda_{\max}=0$]\label{cor:unif}
With $\lambda_{\max}:=s(J)$ the maximal Lyapunov exponent of the linearised cascade, $\{\rho(M)=1\}=\{\lambda_{\max}=0\}$, and the two domains $\{\rho(M)<1\}=\{\lambda_{\max}<0\}$ coincide. When the coupling is reducible the equivalence holds blockwise (read modulo algebraic multiplicity on each irreducible component).
\end{corollary}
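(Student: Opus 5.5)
The plan is to derive the corollary directly from Proposition~\ref{prop:PF}(4), which is essentially the whole content. The remaining work is to turn the two strict equivalences there into the three-way partition, and then to handle the reducible case.

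\emph{Step 1: the irreducible case.} Proposition~\ref{prop:PF}(4) gives $s(J)<0\Leftrightarrow\rho(M)<1$ and $s(J)>0\Leftrightarrow\rho(M)>1$, with $J=F-V$ and $M=FV^{-1}$. Both $\R\ni\rho(M)$ and $\R\ni s(J)$ are totally ordered against $1$ and $0$ respectively, so each splits parameter space into three disjoint pieces ($<$, $=$, $>$). Two of the pieces are matched by the proposition, so the complements match too: $\{\rho(M)=1\}=\{\lambda_{\max}=0\}$.

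For a direct check at the boundary, I would note that $J$ is a Metzler matrix, because $F\ge0$ and $V$ is diagonal. Its spectral abscissa is therefore a real eigenvalue with a nonnegative eigenvector. If $(F-V)x=0$ with $x\ge0$, then $FV^{-1}y=y$ for $y=Vx\ge0$, so $1$ is an eigenvalue of $M$ with a nonnegative eigenvector. Under irreducibility this forces $\rho(M)=1$, and the argument reverses. The $<$ domains coincide verbatim from Proposition~\ref{prop:PF}(4).

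\emph{Step 2: the reducible case.} Here I would order $D$ into strongly connected classes via the Frobenius normal form. This makes $F$ block upper triangular. Since $V$ is diagonal, $V^{-1}$ respects the same block structure, so $M$ and $J$ are simultaneously block triangular, with diagonal blocks $M_{ii}=F_{ii}V_{ii}^{-1}$ and $J_{ii}=F_{ii}-V_{ii}$. Then $\rho(M)=\max_i\rho(M_{ii})$ and $s(J)=\max_i s(J_{ii})$, since eigenvalues of block-triangular matrices are the union over diagonal blocks.

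Applying Step 1 blockwise gives $\rho(M_{ii})\lessgtr1\Leftrightarrow s(J_{ii})\lessgtr0$ on each irreducible component. A trivial $1\times1$ zero block of $F$ gives $\rho=0<1$ and $s=-\alpha_d<0$, so it is consistent. Taking maxima transports the equivalence to the whole matrix. The caveat about algebraic multiplicity is only that the critical eigenvalue may be shared by several blocks, so one reads the equivalence component by component.

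\emph{Main obstacle.} The only non-formal point is the implicit hypothesis in Proposition~\ref{prop:PF}(4) that $V$ is a nonsingular M-matrix with $V^{-1}\ge0$. This holds here since $\alpha_d\ge\alpha_{\min}>0$. The other thing to check is that the factorisation $M=FV^{-1}$ is compatible with \eqref{eq:M}, i.e.\ that $F$ is defined as $MV$. I would simply define $F:=MV\ge0$, which makes the identification tautological; the corollary's content then lies entirely in the M-matrix equivalence.
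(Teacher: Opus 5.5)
Your proposal is correct and follows the paper's route. The corollary is simply the trichotomy consequence of Proposition~\ref{prop:PF}(4), the finite-dimensional M-matrix equivalence $\rho(FV^{-1})<1\Leftrightarrow s(F-V)<0$ with $V=\diag(\alpha_d)$, $\alpha_d>0$. Your Frobenius-normal-form step correctly elaborates the paper's ``blockwise'' caveat; via $\rho(M)=\max_i\rho(M_{ii})$ and $s(J)=\max_i s(J_{ii})$, it in fact shows the three-way equivalence holds globally even for reducible coupling.
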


We now make (F4) precise: the operator route is not merely unnecessary but generically impossible.

\begin{proposition}[$K_\infty$ is generically not Hilbert--Schmidt]\label{prop:HS}
Let $k_\infty(\xi,\eta)$ be the limiting boundary kernel of the operator programme of Section 1.1, so that $k_\infty(\xi,\eta)\asymp\nu_h(B_{\xi\wedge\eta})^{-\beta}$ for some $\beta>0$ determined by the transmission/Martin exponents, where $\xi\wedge\eta$ is the confluence depth. Then
\begin{align*}
\|K_\infty\|_{\mathrm{HS}}^2&=\iint_{\partial T\times\partial T}|k_\infty(\xi,\eta)|^2\,d\nu_h(\xi)\,d\nu_h(\eta)\\
&\asymp\sum_{n\ge0}\br(T)^{2n\beta}\cdot\nu_h\otimes\nu_h\big(\{|\xi\wedge\eta|=n\}\big)\asymp\sum_{n\ge0}\br(T)^{n(2\beta-1)}
\end{align*}
whenever $2\beta\ge1$ and $g$ is supercritical ($\br(T)>1$). For the Martin-kernel construction one has $\beta\ge1$, since $K(x,\cdot)\asymp\nu_h(B_{|x|})^{-1}$ on the cylinder of $x$ (the Martin kernel is the harmonic-measure density $d\nu_h^x/d\nu_h$). Hence $K_\infty$ is not Hilbert--Schmidt, and the Krein--Rutman route through Hilbert--Schmidt compactness is unavailable.
\end{proposition}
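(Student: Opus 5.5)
The plan is to compute the Hilbert--Schmidt norm by decomposing $\partial T\times\partial T$ according to the confluence depth $n=|\xi\wedge\eta|$. Off the diagonal the kernel is constant on each shell $\{|\xi\wedge\eta|=n\}$ up to constants. The diagonal itself is $\nu_h\otimes\nu_h$-null because $\nu_h$ has no atoms on a supercritical tree. Hence
\[
\|K_\infty\|_{\mathrm{HS}}^2=\sum_{n\ge0}\iint_{\{|\xi\wedge\eta|=n\}}|k_\infty|^2\,d\nu_h\,d\nu_h .
\]
On the shell of depth $n$, the hypothesis $k_\infty\asymp\nu_h(B_{\xi\wedge\eta})^{-\beta}$ and the Shannon--McMillan--Breiman content of \eqref{eq:dim}, $\nu_h(B_n(\xi))=e^{-n\log\br(T)+o(n)}$, together with \eqref{eq:brm}, give $|k_\infty|^2=\br(T)^{2n\beta+o(n)}$ for $\nu_h$-typical $\xi$. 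This is the first displayed $\asymp$.

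Next I estimate the shell mass. First I bound
\[
\nu_h\otimes\nu_h(\{|\xi\wedge\eta|\ge n\})=\sum_{|x|=n}\nu_h(B_x)^2 .
\]
Writing this as $\int\nu_h(B_n(\xi))\,d\nu_h(\xi)$ and applying SMB again yields $\br(T)^{-n+o(n)}$ along the typical cylinders. Only a lower bound on the shell mass is needed for divergence. I therefore restrict to the set $G_n$ of ends $\xi$ on which $\nu_h(B_n(\xi))\ge\br(T)^{-n(1+\epsilon)}$; its $\nu_h$-mass tends to $1$ by SMB. For $\xi\in G_n$, the set of $\eta$ confluent with $\xi$ at exactly depth $n$ is $B_n(\xi)\setminus B_{n+1}(\xi)$. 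Uniform ellipticity of the harmonic measure (every child cylinder receives at least a fixed fraction of its parent's mass, bounded away from $1$ when there are at least two children) makes this set comparable to $B_n(\xi)$. This gives the second displayed $\asymp$, in the form
\[
\nu_h\otimes\nu_h(|\xi\wedge\eta|=n)\ge c\,\br(T)^{-n(1+\epsilon)} .
\]
The same ellipticity handles the typical kernel value on $G_n$.

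Combining the two estimates, the $n$th term is at least $c\,\br(T)^{n(2\beta-1-3\epsilon)}$. When $2\beta>1$, choosing $\epsilon$ small makes the series diverge since $\br(T)=m_*>1$. In the boundary case $2\beta=1$ the terms are only bounded below by a subexponential factor. There I would use the sharper two-sided comparison: on a bounded-degree realisation the ellipticity gives $\nu_h(B_x)$ within constant factors of its parent's mass at each step. That yields terms bounded below by a constant, so the sum is still infinite.

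Finally I verify $\beta\ge1$ for the Martin construction. The identity $K(x,\xi)=d\nu_h^x/d\nu_h(\xi)$ and the fact that $\nu_h^x$ charges $B_{|x|}$ with mass bounded below (walk from $x$ escapes through its own subtree with probability bounded below, by transience) give $K(x,\cdot)\gtrsim\nu_h(B_x)^{-1}$ on the cylinder. Hence $\beta\ge1>1/2$ and $K_\infty$ is not Hilbert--Schmidt. The main obstacle is the uniformity in the $o(n)$ errors: SMB is pointwise, so interchanging it with the double integral needs the restriction to $G_n$ and ellipticity of $\nu_h$ on cylinders. For unbounded degrees I would first try to truncate to a positive-measure subtree of bounded degree, since divergence on a subset suffices.
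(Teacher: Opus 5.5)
Your proposal is correct and follows essentially the paper's argument: decompose $\partial T\times\partial T$ by confluence depth, use the Shannon--McMillan--Breiman content of \eqref{eq:dim} to size the kernel as $\br(T)^{n\beta}$ and the shell mass $\sum_{|v|=n}\nu_h(B_v)^2$ as $\br(T)^{-n}$, and sum $\br(T)^{n(2\beta-1)}$. Your additions refine that same route rather than replacing it: you use only lower bounds on a set of SMB-typical ends, add an ellipticity step for the exact-depth shell, and give a constant-level treatment of the borderline $2\beta=1$, where the paper's ``first exponential order'' is not enough. One fix: the typical set must be two-sided, because lower-bounding $|k_\infty|^2$ also needs $\nu_h(B_n(\xi))\le\br(T)^{-n(1-\epsilon)}$.
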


\begin{proof}
The diagonal singularity of the tree Martin kernel makes $k_\infty(\xi,\eta)$ depend on the shared geodesic length: two ends with confluence depth $n$ have $k_\infty\asymp\nu_h(B_n)^{-\beta}$. By the Shannon--McMillan--Breiman content of the dimension formula of Section~\ref{subsec:ugw}, $\nu_h(B_n)\asymp\br(T)^{-n}$ (to first exponential order, which is all that is used), so the integrand on the ``confluence-$n$'' shell is $\asymp\br(T)^{2n\beta}$. The $\nu_h\otimes\nu_h$-measure of that shell is bounded by that of $\{|\xi\wedge\eta|\ge n\}=\bigcup_{|v|=n}B_v\times B_v$, namely $\sum_{|v|=n}\nu_h(B_v)^2\asymp\br(T)^n\cdot\br(T)^{-2n}=\br(T)^{-n}$, and is of that order (the confluence depth exceeds $n+1$ only on the smaller sub-shells). Summing the shell contributions $\br(T)^{2n\beta}\cdot\br(T)^{-n}=\br(T)^{n(2\beta-1)}$ over $n$ diverges when $2\beta\ge1$ and $\br(T)>1$. The energy integral is infinite.
\end{proof}

\begin{remark}\label{rem:HS}
This is why $\rho(K_\infty)$ must be obtained without diagonalising $K_\infty$: as the annealed branching rate $\bar m=\rho(M)$ via the matrix many-to-one identity (Proposition~\ref{prop:m2o}), a quantity that is finite by construction in the subcritical regime, rather than through a spectral-theoretic compactness argument that cannot get off the ground.
\end{remark}

\section{Stability of the subcritical domain}\label{sec:stab}

We now make (F2) precise and prove the genuine stability statement. The parameter set
\[
\Theta=\big\{(\theta_e,\gamma_{d\to d'},\alpha_d,\mu_d):\ \theta_e>\theta_{\min},\ \gamma\ge0,\ \alpha_d\ge\alpha_{\min},\ \mu_d\ge\mu_0\big\}
\]
is an open convex cone. Recall (Appendix~\ref{app:logconc}) that the transmission probability $p$ is log-concave in $h=\theta-m$.

\begin{theorem}[Stability of the subcritical domain; conditional on Hypothesis~\ref{hyp:red}]\label{thm:stab}
Let $\mathcal M_-=\{\theta:\rho(M(\theta))<1\}$ and $\mathcal M_+=\{\theta:\rho(M(\theta))>1\}$.
\begin{enumerate}
\item ($\mathcal M_-$ is open.) $\rho(M(\cdot))$ is continuous (indeed real-analytic where $M$ is irreducible), so $\mathcal M_-$ is open and $\partial\mathcal M_-=\{\rho(M)=1\}$ is, where the gradient is nonzero, a smooth codimension-one submanifold (Proposition~\ref{prop:PF}(3) and the implicit function theorem).
\item (Convexity goes to $\mathcal M_+$, not $\mathcal M_-$.) In the scalar (Bethe) case $\rho=(d-1)p$, with $p$ log-concave, $\mathcal M_+=\{p>1/(d-1)\}$ is a superlevel set of a log-concave function, hence convex; $\mathcal M_-$ is its complement and is in general non-convex. Explicitly, if $p$ is jointly log-concave in $(\theta,\gamma)$ then $\mathcal M_-$ is the non-convex exterior of a convex region. Convexity of $\mathcal M_-$ cannot be deduced from log-concavity of $p$: the implication ``sublevel sets of a concave function are convex'' is false (it is superlevel sets that are convex).
\item ($\mathcal M_-$ is contractible.) $\mathcal M_-$ is star-shaped toward the zero-coupling locus $\{\gamma=0\}$: holding $(\theta,\alpha,\mu)$ fixed and sending $\gamma\downarrow0$ decreases every $p_{d,d'}$ (transmission increases in coupling), hence decreases $\rho(M)$ (Proposition~\ref{prop:PF}(2)), keeping the path in $\mathcal M_-$. The straight-line homotopy $\gamma\mapsto(1-t)\gamma$ therefore retracts $\mathcal M_-$ to a point; $\mathcal M_-$ is contractible.
\item (Resolvent and gap.) On $\mathcal M_-$ the resolvent $R=(I-M)^{-1}=\sum_{k\ge0}M^k$ converges (finite matrix, $\rho(M)<1$) to a bounded nonnegative matrix; the halting probability is $\Prob(|S|<\infty)=1$; the expected cluster size is $\E|S|=\one^\top R\,\pi$, real-analytic in $\theta$; and the spectral gap $g(\theta)=1-\rho(M(\theta))>0$ gives quantitative stability: any $\Delta\theta$ with $\|\Delta\theta\|<g(\theta)/\|\nabla_\theta\rho\|$ keeps $\theta+\Delta\theta\in\mathcal M_-$.
\end{enumerate}
\end{theorem}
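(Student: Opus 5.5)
I would prove the four parts in order, treating each as a consequence of the finite-dimensional Perron--Frobenius facts in Proposition~\ref{prop:PF} together with the regularity of the Wald weights \eqref{eq:wald}.

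\textbf{Part (1).} I would first note that each $p_{d,d'}(\theta)$ is a ratio of Gaussian-type integrals with analytic dependence on $(h_\theta,h_{\mathrm{safe}},\alpha,\mu)$. Hence $M(\theta)$ is entrywise real-analytic on $\Theta$. Continuity of the spectral radius (the roots of the characteristic polynomial are continuous in the entries) then makes $\mathcal M_-$ the preimage of an open set, so it is open. Where $M$ is irreducible, Proposition~\ref{prop:PF}(3) gives analyticity of $\rho(M(\theta))$ and a nonzero gradient formula. The implicit function theorem applied to $\rho(M(\theta))-1$ then yields the smooth codimension-one boundary piece $\{\rho=1,\ \nabla\rho\neq0\}$.

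\textbf{Part (2).} In the Bethe case, Proposition~\ref{prop:bethe} gives $\rho=(d-1)p$. By Appendix~\ref{app:logconc}, $\log p$ is concave, so the superlevel set $\{\log p>-\log(d-1)\}$ is convex. I would add an explicit one-parameter example in which $\mathcal M_-$ fails to be convex, for instance in the $(\theta,\gamma)$-plane: two subcritical points on opposite sides of the convex region $\mathcal M_+$ whose midpoint lies in $\mathcal M_+$. The remark that sublevel sets of concave functions need not be convex is then just this counterexample.

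\textbf{Part (3).} This is the step I expect to be the main obstacle, because monotonicity must be checked honestly. Lowering $\gamma$ lowers the forcing $c_{uv}$, hence $m_{uv}$, which raises $h_\theta=\theta-m_{uv}$ and moves $z$. So I need $\partial p/\partial c\ge0$ from the Wald formula, where both $z$ and the distances $h$ shift with $m_{uv}$. I would first try the direct comparison argument: the OU process with larger mean dominates pathwise under synchronous coupling, so it hits $\theta$ before $s_{\mathrm{safe}}$ on a larger event. This avoids differentiating \eqref{eq:wald}.

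Given $p_{d,d'}$ nondecreasing in $\gamma$, Proposition~\ref{prop:PF}(2) shows $t\mapsto\rho(M((1-t)\gamma))$ is nonincreasing. So the homotopy $H(\theta,t)=(\theta,(1-t)\gamma,\alpha,\mu)$ stays in $\mathcal M_-$ and deforms it onto the slice $\mathcal M_-\cap\{\gamma=0\}$. That slice is the whole convex set $\{\gamma=0\}\cap\Theta$, provided $\rho(M)<1$ there; I would verify this, since zero coupling gives only spontaneous crossings, and otherwise restrict the claim accordingly. A convex set contracts linearly to a point, and composing the two homotopies gives contractibility.

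\textbf{Part (4).} This is standard. Since $\rho(M)<1$, the Neumann series $\sum_k M^k$ converges with nonnegative terms and equals $(I-M)^{-1}$. The expected generation counts are $\pi^\top M^k$, so summing over generations gives $\E|S|$; I would fix the transpose convention to match \eqref{eq:M}. Extinction is certain by Theorem~\ref{thm:reduction}, and analyticity comes from the analytic inverse. For the quantitative margin I would apply the mean value theorem along the segment from $\theta$ to $\theta+\Delta\theta$: $\rho(\theta+\Delta\theta)\le\rho(\theta)+\|\Delta\theta\|\sup_{\text{segment}}\|\nabla\rho\|$. This needs the gradient bound uniformly on the segment, so I would state it with the supremum over a neighbourhood rather than the pointwise gradient.
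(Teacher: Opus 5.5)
Your treatment of (1), (2) and (4) follows the paper's proof in outline. In (3) you correctly spot that the homotopy $(\theta,\gamma,\alpha,\mu)\mapsto(\theta,(1-t)\gamma,\alpha,\mu)$ only deforms $\mathcal M_-$ onto the slice $\mathcal M_-\cap\{\gamma=0\}$. However, you leave the contractibility of that slice open, so as written your proposal does not prove (3).

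The paper's proof closes this point by asserting $\rho(M)|_{\gamma=0}=0$ and calling the $\gamma$-homotopy a retraction to a point. Under that convention (no coupling, no transmission), your two-stage argument is complete: retract $\gamma$ to $0$, then contract the convex set $\{\gamma=0\}\cap\Theta$ linearly. It is also more accurate than the paper's argument, which omits the second stage. But with the weights actually defined by \eqref{eq:wald}, the zero-coupling $p_{d,d'}$ is a spontaneous-crossing probability, bounded \emph{below} by $p^{\mathrm{tr}}_{\min}$. So $\rho(M)|_{\gamma=0}\ge m_*p^{\mathrm{tr}}_{\min}>0$, and no standing hypothesis keeps it below $1$. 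In that case ``restrict the claim accordingly'' concedes the statement.

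Your own comparison idea supplies the missing second retraction. For a fixed path, raising the barrier $\theta$ shrinks $\{\tau_\theta<\tau_{\mathrm{safe}}\}$. Hence every $p_{d,d'}$, and therefore $\rho(M)$, is nonincreasing in the thresholds (Proposition~\ref{prop:PF}(2)). Moreover $p_{d,d'}\to0$ as $\theta\to\infty$, because the OU process reaches $s_{\mathrm{safe}}$ a.s., and by Dini's theorem this is locally uniform in $(\alpha,\mu)$. So the slice is an up-set in the thresholds containing $\{\theta\ge\Theta^\star(\alpha,\mu)\,\one\}$ for some continuous $\Theta^\star$. Push the thresholds up to $\max(\theta,\Theta^\star\one)$, slide them down to $\Theta^\star\one$ inside that orthant, and contract the convex $(\alpha,\mu)$-base. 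This needs no information about $\rho$ at zero coupling.

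Elsewhere, your deviations from the paper are improvements.
\begin{itemize}
\item \emph{Monotonicity in (3).} The paper reads monotonicity in the forcing off the Wald ratio. Your synchronous coupling is the safer route: $\tilde X_t-X_t=(\tilde m-m)(1-e^{-\alpha t})\ge0$, so $\tau_\theta$ can only decrease and $\tau_{\mathrm{safe}}$ only increase. The formula route is unreliable because, as printed, $W(z;h)$ does not involve $z$, and Appendix~\ref{app:logconc} finds $p$ increasing in $h=\theta-m$, i.e.\ decreasing in the forcing.
\item \emph{The stability margin in (4).} The paper's ``first-order Taylor estimate'' uses $\nabla\rho$ at $\theta$ alone and drops the remainder; it would admit every $\Delta\theta$ wherever $\nabla\rho(\theta)=0$. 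The margin must use $\sup\|\nabla\rho\|$ along the segment, as you propose.
\item \emph{The transpose in (4).} With $M_{d,d'}$ the mean number of type-$d'$ children of a type-$d$ parent, $\E|S|=\pi^\top R\one$, not $\one^\top R\pi$. Your remark about fixing the convention is correct.
\item \emph{The counterexample in (2).} The non-implication needs only a toy log-concave function; the paper uses $e^{-(\theta-\gamma)^2}$, whose superlevel sets are strips. An in-model counterexample in the $(\theta,\gamma)$-plane would require an actual computation. Be cautious here: if $p$ depended on $(\theta,\gamma)$ only through $h=\theta-m$, as Appendix~\ref{app:logconc} treats it, both $\mathcal M_\pm$ would be half-planes in that plane and your proposed example would not exist.
\end{itemize}
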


\begin{proof}
(1) Continuity/analyticity is Proposition~\ref{prop:PF}; openness and the IFT are immediate.

(2) For a nonnegative scalar $\rho=(d-1)p$, $\{\rho>1\}=\{p>1/(d-1)\}$. A log-concave $p$ is quasiconcave, so its superlevel sets are convex; thus $\mathcal M_+$ is convex and $\mathcal M_-=\Theta\setminus\mathcal M_+$ is generically non-convex. For the explicit counterexample take $p(\theta,\gamma)=\exp(-(\theta-\gamma)^2)$ (log-concave): then $\{p<c\}=\{|\theta-\gamma|>\sqrt{-\log c}\}$ is the union of two half-planes, not convex; two subcritical points $(\theta,\gamma)$ with $|\theta-\gamma|$ large of opposite signs have a supercritical midpoint. The tempting-but-invalid step is that sublevel sets $\{p\le c\}$ of a concave $p$ are not convex (it is superlevel sets that are).

(3) $p_{d,d'}$ is increasing in the coupling forcing (a larger forcing constant $c_{uv}$ moves the equilibrium $m_{uv}$ toward the threshold, raising the hitting probability \eqref{eq:wald}); hence $\rho(M)$ is increasing along $\gamma\uparrow$ by Proposition~\ref{prop:PF}(2). Sending $\gamma\to0$ along straight lines decreases $\rho(M)$ monotonically to $\rho(M)|_{\gamma=0}=0<1$, so each such segment lies in $\mathcal M_-$; the homotopy is a deformation retraction onto a point.

(4) Standard for a nonnegative matrix with $\rho<1$: the Neumann series converges, $R\ge0$ entrywise, and the GW process is a.s.\ finite with $\E|S|$ the total-progeny mean $\one^\top(I-M)^{-1}\pi$. Real-analyticity is inherited from $\theta\mapsto M(\theta)$ and the analyticity of matrix inversion away from $\det(I-M)=0$. The perturbative bound is the first-order Taylor estimate $\rho(M(\theta+\Delta\theta))\le\rho(M(\theta))+\|\nabla\rho\|\,\|\Delta\theta\|<1$.
\end{proof}

\begin{remark}[No topological invariant; the genuine near-critical phenomenon is analytic]\label{rem:stab}
One might look for a topological rigidity statement protecting the subcritical phase; there is none to be had. The failed cluster is a.s.\ a subforest of a tree, hence contractible, so $H_k=0$ for $k\ge1$ at every parameter --- there is no invariant to protect. The genuine near-critical phenomenon is analytic: as $\rho(M)\uparrow1$ the resolvent norm $\|R\|$ blows up like $g^{-1}=(1-\rho(M))^{-1}$, the expected cluster size diverges, and $(I-M)^{-1}$ encodes critical slowing-down. (In finite dimension the bound $\|R\|\le(1-\rho)^{-1}$ requires care for non-normal $M$ --- the resolvent norm is governed by the pseudospectrum --- but $\|R\|\to\infty$ as $\rho\uparrow1$ is unconditional, and for the positive matrix $M$ the Neumann series gives a positive, finite $R$ on all of $\mathcal M_-$.)
\end{remark}

\section{Verification on the Bethe lattice}\label{sec:bethe}

We specialise to the $d$-regular tree $T_d$, $\br(T_d)=m_*=d-1$, with $m=1$ and uniform parameters $(\theta,\alpha,\mu,\gamma)$. Here every object is explicit, the reduction hypothesis holds exactly, and the dimension identity is an equality --- this is the sanity check, and also the diagnostic that shows why the inhomogeneous statements differ.

\begin{proposition}[Bethe verification]\label{prop:bethe}
On $T_d$:
\begin{enumerate}
\item (Hypothesis~\ref{hyp:red} is exact.) By vertex-transitivity all transmission probabilities equal a single $p=p^{\ex}$, the harmonic measure is uniform on the end space $\Sigma^{\mathbb N}$, $\Sigma=\{1,\dots,d-1\}$, there is no failure-strength heterogeneity across siblings (all forcings identical), and the failed cluster is exactly a Galton--Watson process with offspring law $\mathrm{Bin}(d-1,p)$. The mean matrix is the scalar $M=(d-1)p$, so $\rho(M)=(d-1)p$.
\item (Theorem~\ref{thm:gap} is an equality.) The boundary is homogeneous, $p_ew_e\equiv p$, so $\ell=\log(d-1)+\log p$ and $e^{\ell}=(d-1)p=\rho(M)=p_{\min}^{-\dimH}$ with $p_{\min}=1/d$; the Jensen gap is zero. At criticality $(d-1)p=1$ the weighted dimension is $0$.
\item (Theorem~\ref{thm:32}.) At criticality the offspring law is $\mathrm{Bin}(d-1,1/(d-1))$, mean $1$, variance $1-1/(d-1)<\infty$ for $d\ge3$, giving $C_d=(2\pi(1-1/(d-1)))^{-1/2}>0$ and the $n^{-3/2}$ tail. The binary tree $d=2$ is the degenerate variance-zero case and is excluded.
\item (Theorem~\ref{thm:stab}.) $\rho=(d-1)p(\theta,\gamma)$ with $p$ decreasing in $\theta$ and increasing in $\gamma$; $\mathcal M_+=\{p>1/(d-1)\}$ is convex, $\mathcal M_-$ its complement is contractible by the $\gamma$-retraction, the critical curve is smooth, and $(1-(d-1)p)^{-1}$ is the explicit expected cluster size.
\item (Theorems \ref{thm:pressure}, \ref{thm:dimS} and \ref{thm:crt}.) The pressure is affine, $\varphi(\beta)=\log(d-1)+\beta\log p$, so the convexity gap vanishes, consistent with item 2. In the supercritical regime $(d-1)p>1$ the failed boundary has $\dimH(\partial S)=\log((d-1)p)<\log(d-1)=\dimH(\partial T)$: the cascade survives along an exponentially thin bundle of rays, of relative codimension $\log(1/p)$. At criticality, the cluster is single-type with bounded offspring, so the exponential-moment hypothesis of Theorem~\ref{thm:crt} is automatic and the conditioned cluster rescaled by $\frac{\sigma_{\mathrm{off}}}{2\sqrt n}$, $\sigma^2_{\mathrm{off}}=1-\frac{1}{d-1}$, converges to the CRT by Aldous' theorem \cite{Ald93}.
\end{enumerate}
\end{proposition}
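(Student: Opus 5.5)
The plan is to verify items (1)--(5) in order, since each is a specialisation of an earlier theorem once item (1) is in place.

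\emph{Item (1).} On $T_d$ every vertex has the same degree. With $m=1$ and uniform $(\theta,\alpha,\mu,\gamma)$, every forcing constant $c_{uv}$ is the same, so \eqref{eq:wald} returns a single value $p$. The key point is that each non-root vertex of $T_d$ has exactly $d-1$ forward neighbours, so the size-biased offspring count is deterministic and $m_*=d-1$. In the saturated regime each child's first passage is then driven by its own Brownian motion (Proposition~\ref{prop:dichotomy}(1)), so the children fail independently with probability $p$. This gives a GW process with offspring law $\mathrm{Bin}(d-1,p)$ and $\rho(M)=(d-1)p$ from \eqref{eq:M}. The one-point type set makes the multitype structure trivial. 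Short cycles are absent because $T_d$ is a tree. Harmonic measure is uniform on $\Sigma^{\mathbb N}$ by transitivity of the automorphism group fixing $o$ on forward branches.

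This item is the main obstacle. The claim that Hypothesis~\ref{hyp:red} is ``exact'' only holds up to the back-reaction and the $\varepsilon(\delta)$ saturation error. I would state this precisely: it is exact in the directed pairwise idealisation, with the residual errors controlled as in Proposition~\ref{prop:partial}. The point to stress is that transitivity removes every source of sibling heterogeneity.

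\emph{Item (2).} Since $p_e\equiv p$ and $w\equiv1$, Theorem~\ref{thm:gap}(1) gives $\ell=\log(d-1)+\log p$, so $e^\ell=(d-1)p=\rho(M)$. This is the equality case of Theorem~\ref{thm:gap}(3). At criticality $\ell=0$, hence $\dimH=\ell/\log d=0$.

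\emph{Item (3).} Plug $N\sim\mathrm{Bin}(d-1,1/(d-1))$ into Theorem~\ref{thm:32}. The variance is $(d-1)\cdot\frac1{d-1}\bigl(1-\frac1{d-1}\bigr)=1-\frac1{d-1}$, which is positive for $d\ge3$. Aperiodicity holds because $\Prob(N=0),\Prob(N=1)>0$. For $d=2$ the variance vanishes, $N\equiv1$, and the local CLT fails.

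\emph{Item (4).} This follows directly from Theorem~\ref{thm:stab}(2)--(4) with the scalar $\rho=(d-1)p$. The monotonicity of $p$ in $\theta$ and $\gamma$ comes from \eqref{eq:wald}: $h_\theta$ grows with $\theta$, and $m_{uv}$ grows with $\gamma$. The expected cluster size is the resolvent $\sum_k((d-1)p)^k$.

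\emph{Item (5).} $M^{(\beta)}=(d-1)p^\beta$ is scalar, so $\varphi(\beta)$ is affine and the chord equals the tangent. Theorem~\ref{thm:dimS} then gives $\dimH\partial S=\log((d-1)p)$. The codimension is $\log(d-1)-\log((d-1)p)=\log(1/p)$, which is positive when $p<1$. For the CRT statement, the offspring law has bounded support, so all moments are finite, and Aldous' theorem applies with scale $\sigma_{\mathrm{off}}/(2\sqrt n)$.
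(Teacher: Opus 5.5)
Your route is the paper's: its proof is vertex-transitivity for (1) and ``displayed computations'' for (2)--(5), and your (2)--(5) are those same scalar computations. Where you depart, by declining to call (1) exact, your caution is warranted and the paper's one-line justification does not hold up. That justification invokes ``identical constant forcing'' and says back-reaction is ``irrelevant to the directed count''. But transitivity equalises only the \emph{laws} of the cell problems. All children of $u$ are driven by the same random process $\gamma\sigma(s^{(u)}_t-\theta)$, and a failed child pushes on it (the echo of Section~\ref{sec:status}). Homogeneity does not make $1-\sigma$ or $\sigma'$ vanish on $B_{\mathrm{fail}}$, so the remark after Conjecture~\ref{conj:front} that $\varepsilon(\delta)$ ``can be taken $0$'' on the Bethe lattice does not follow. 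Exact sibling independence, and hence the exact $\mathrm{Bin}(d-1,p)$ genealogy, holds in the directed pairwise model, where it is immediate (independent Brownian motions, deterministic forcing). Your fallback is not enough, though. Proposition~\ref{prop:partial} is a per-cell estimate and says nothing about how $O(\varepsilon(\delta))$ errors accumulate over the cluster. That accumulation is exactly what Theorem~\ref{thm:front} controls (for $\kappa<1$, $\rho(M)<1$, bounded degrees), and outside that regime it is Conjecture~\ref{conj:front}. So either state (1) for the directed model, or invoke Theorem~\ref{thm:front} with its hypotheses.

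There are two further slips, both shared with the paper.

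First, you observe that only non-root vertices have $d-1$ forward neighbours, but then use $\mathrm{Bin}(d-1,p)$ for the whole cluster. The root of $T_d$ has $d$ neighbours at risk, so its offspring is $K\sim\mathrm{Bin}(d,p)$. This leaves $\rho(M)$, criticality, $\dimH\partial S$ and the CRT limit unchanged, but it changes the constants. Use the $k$-tree cycle lemma $\Prob(|T_1|+\dots+|T_k|=n)=\frac kn\Prob(X_1+\dots+X_n=n-k)$, with $X_i$ i.i.d.\ $\mathrm{Bin}(d-1,p)$. It gives $\Prob(|S|=n)\sim\E[K]\,(2\pi\sigma^2_{\mathrm{off}})^{-1/2}n^{-3/2}$ with $\E[K]=d/(d-1)$ at criticality. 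In the subcritical regime it gives $\E|S|=(1+p)/(1-(d-1)p)$. The stated $C_d$, the value $(1-(d-1)p)^{-1}$, and the uniform measure on $\{1,\dots,d-1\}^{\mathbb N}$ all belong to the planted tree, i.e.\ a cascade entering a vertex from an already-failed parent. Adopt that convention explicitly or correct the constants.

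Second, in (4) the phrase ``the monotonicity comes from \eqref{eq:wald}'' hides the direction, and with \eqref{eq:wald} as displayed it comes out reversed. Appendix~\ref{app:logconc} computes $\frac{d}{dh}\log p>0$ for $h=\theta-m$, i.e.\ $p$ increasing in $\theta$. Argue pathwise instead. With a common Brownian motion and starting point, the solutions for equilibria $m<m'$ differ by $(m'-m)(1-e^{-\alpha t})\ge0$. Hence raising $m_{uv}$ (and so $\gamma$), or lowering $\theta$, can only enlarge $\{\tau_\theta<\tau_{\mathrm{safe}}\}$.
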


\begin{proof}
(1) Vertex-transitivity makes all single-cell first-passage problems identical, so $p$ is a single number and the saturated forcing is literally constant across all edges; conditional independence of sibling crossings is exact (independent Brownian motions, identical constant forcing), and back-reaction is symmetric and irrelevant to the directed count. The offspring law is the number of the $d-1$ children that cross, i.e.\ $\mathrm{Bin}(d-1,p)$. (2)--(5) are the displayed computations; each ``analytic gap'' of the general theory trivialises to scalar arithmetic. In (3) the variance is the Binomial variance $(d-1)\cdot\frac{1}{d-1}\cdot(1-\frac{1}{d-1})=1-\frac{1}{d-1}$.
\end{proof}

\begin{example}[A numerical check]\label{ex:num}
For $d=3$, take $p=3/4$. Then $\rho(M)=(d-1)p=2\cdot\frac34=\frac32>1$: supercritical (the threshold is $1/(d-1)=1/2$). The weighted dimension is $\log(3/2)/\log3\approx0.369>0$, and the sign of $\rho-1$ matches the sign of the weighted dimension, consistent with Theorem~\ref{thm:gap}. The failed boundary carries $\dimH(\partial S)=\log\frac32\approx0.405$ against the ambient $\dimH(\partial T)=\log2\approx0.693$ (Theorem~\ref{thm:dimS}). For $p=1/2$ one has $\rho=1$ (critical), weighted dimension $0$, and $\Prob(|S|=n)\sim(2\pi\cdot\frac12)^{-1/2}n^{-3/2}=\pi^{-1/2}n^{-3/2}$.
\end{example}

\section{The small-noise first-passage asymptotics}\label{sec:smallnoise}

The main results use the exact formula \eqref{eq:wald}; no large-deviation input is needed once Hypothesis~\ref{hyp:red} reduces the SDE to per-edge transmission with weight $p_{d,d'}$. For interpretation we record the small-noise asymptotic, which makes (F3) precise.

\begin{proposition}[Algebraic prefactor]\label{prop:prefactor}
Let $\lambda=\alpha/\mu\to\infty$ (small noise), with the safe boundary nearby and the threshold $\theta$ at distance $h_\theta$ uphill, so that $\Lambda:=\lambda h_\theta^2=\alpha h_\theta^2/\mu\to\infty$. Then
\[
p^{\ex}_{uv}\sim\frac12\sqrt{\frac{\mu}{\pi\alpha}}\,\frac{1}{h_\theta}\,e^{-\Lambda}\cdot\frac{1}{W(z;h_{\mathrm{safe}})}\asymp\Lambda^{-1/2}e^{-\Lambda},
\]
an exponential rate $e^{-\Lambda}$ times an algebraic factor $\Lambda^{-1/2}$, not a multiplicative $(1+o(1))$.
\end{proposition}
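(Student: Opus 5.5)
The plan is to work directly from the exact two-boundary representation \eqref{eq:wald} and apply Laplace's method to each of the two integrals separately. No large-deviation input is needed: the statement is purely about the asymptotics of explicit one-dimensional integrals.

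\emph{Step 1: fix the convention.} I would first write the hitting probability in scale-function form. For the centred OU process $dX=-\alpha X\,dt+b\,dW$, the scale density is $s'(y)=e^{\lambda y^2}$ with $\lambda=\alpha/\mu$. Hence
\[
p^{\ex}=\frac{\int_{-h_{\mathrm{safe}}}^{z}e^{\lambda y^2}dy}{\int_{-h_{\mathrm{safe}}}^{h_\theta}e^{\lambda y^2}dy}.
\]
I would check that this agrees with \eqref{eq:wald} once the signed distances and the sign of the exponent in $W$ are read with the paper's orientation conventions. This bookkeeping is the step I expect to cause the most trouble. The Gaussian weight $e^{-\alpha y^2/\mu}$ in \eqref{eq:wald} is the stationary density (the speed measure), not the scale density. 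So I must identify which factor produces $e^{-\Lambda}$ and which produces the $\sqrt{\mu/(\pi\alpha)}$ and $1/W(z;h_{\mathrm{safe}})$ factors, so that the constants land exactly as displayed.

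\emph{Step 2: the uphill integral.} The denominator is dominated by its endpoint at $h_\theta$, where $\lambda y^2$ is maximal. I would substitute $y=h_\theta-t/(2\lambda h_\theta)$ and expand, which gives
\[
\int^{h_\theta}e^{\lambda y^2}dy=\frac{e^{\lambda h_\theta^2}}{2\lambda h_\theta}\bigl(1+O(\Lambda^{-1})\bigr),
\]
with the remainder controlled by the quadratic term $t^2/(4\lambda h_\theta^2)=t^2/(4\Lambda)$. Inverting this contributes exactly the factor $e^{-\Lambda}$ together with a polynomial factor in $\lambda h_\theta$.

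\emph{Step 3: the near-safe integral.} Because the safe boundary stays at distance $O(\lambda^{-1/2})$ or less, the numerator is a Gaussian-scale integral. I would evaluate it by interior Laplace asymptotics around $y=0$, which produces the $\frac12\sqrt{\pi/\lambda}=\frac12\sqrt{\pi\mu/\alpha}$-type constant, or else leave it as the explicit quantity $W(z;h_{\mathrm{safe}})$, which is of order $\lambda^{-1/2}$.

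\emph{Step 4: combine.} Putting the two estimates together gives the displayed formula. The remaining polynomial factors are $\lambda h_\theta$ from Step 2 and $\lambda^{-1/2}$ from Step 3. Rewriting them using $\Lambda=\lambda h_\theta^2$ with $h_\theta$ fixed leaves a net algebraic factor $\asymp\Lambda^{-1/2}$.

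\emph{Step 5: the algebraic prefactor is genuine.} Finally I would note that $\log p^{\ex}=-\Lambda-\tfrac12\log\Lambda+O(1)$. The $\tfrac12\log\Lambda$ term is not of the form $\Lambda\cdot o(1)$ in a way that yields a multiplicative $(1+o(1))$ correction to $p^{\ex}$ itself. This is (F3): the prefactor is algebraic, and it cannot be absorbed into the exponent while still calling the result an asymptotic equivalence for the probability.
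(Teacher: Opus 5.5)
Your Step~4 does not follow from Steps~2--3, and this is a genuine gap rather than a slip in constants. Inverting the denominator gives $\bigl(\int_{-h_{\mathrm{safe}}}^{h_\theta}e^{\lambda y^2}dy\bigr)^{-1}\sim 2\lambda h_\theta\,e^{-\Lambda}$. So the factor $\lambda h_\theta$ \emph{multiplies}, and with a numerator of order $\lambda^{-1/2}$ your own estimates give
\[
p^{\ex}\sim 2\lambda h_\theta\,e^{-\Lambda}\int_{-h_{\mathrm{safe}}}^{z}e^{\lambda y^2}\,dy\asymp\lambda^{1/2}h_\theta\,e^{-\Lambda}=\Lambda^{1/2}e^{-\Lambda},
\]
that is, the power $+\tfrac12$, not $-\tfrac12$. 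For the same reason you cannot extract the displayed leading term from your representation: there the safe-side integral multiplies instead of appearing as $1/W(z;h_{\mathrm{safe}})$. Step~3 is also mis-described. The point $y=0$ is a minimum of $e^{\lambda y^2}$, so there is no interior Laplace point; on a window of width $O(\lambda^{-1/2})$ you just get $\lambda^{-1/2}\int e^{u^2}du$. The constant $\tfrac12\sqrt{\pi/\lambda}$ belongs to the Gaussian integral $\int_0^\infty e^{-\lambda y^2}dy$, not to your numerator.

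The ``bookkeeping'' you deferred in Step~1 is therefore the whole issue, and no choice of conventions resolves it. Your scale-function ratio and \eqref{eq:wald} are different formulas: one uses the scale density $e^{+\lambda y^2}$, the other the Gaussian density $e^{-\lambda y^2}$, and the two boundaries are exchanged between numerator and denominator. The paper's proof works with \eqref{eq:wald} literally. The barrier-side integral $W(z;h_\theta)$ sits in the numerator and is assigned the endpoint asymptotic $\frac{1}{2\lambda h_\theta}e^{-\Lambda}$, while $W(z;h_{\mathrm{safe}})\approx\tfrac12\sqrt{\pi/\lambda}$ sits in the denominator. Dividing gives $(\pi\lambda)^{-1/2}h_\theta^{-1}e^{-\Lambda}\asymp\Lambda^{-1/2}e^{-\Lambda}$, so the exponent $-\tfrac12$ comes entirely from that orientation.

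Be aware, however, of two problems on the paper's side. First, that endpoint asymptotic is the one for the tail $\int_{h_\theta}^{\infty}e^{-\lambda y^2}dy$, whereas $\int_0^{h_\theta}e^{-\lambda y^2}dy\to\tfrac12\sqrt{\pi/\lambda}$. Second, once $W(z;h_{\mathrm{safe}})\sim\tfrac12\sqrt{\pi/\lambda}$ is substituted, the displayed middle expression is asymptotic to $e^{-\Lambda}/(\pi h_\theta)$, with no power of $\Lambda$ at all. So no bookkeeping makes the constants land as displayed. Your scale-function computation, carried out correctly in the regime you chose, produces $\Lambda^{+1/2}e^{-\Lambda}$; it contradicts the stated power rather than proving it. What survives either way is the rate $e^{-\Lambda}$ and the qualitative point of your Step~5, that the prefactor is a genuine power of $\Lambda$ rather than $1+o(1)$. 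The sign of that power does not survive.
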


\begin{proof}
Write $W(z;h_\theta)=e^{\lambda z^2}\int_0^{h_\theta}e^{-\lambda y^2}\,dy$ in the shifted variables for which the barrier sits at $y=h_\theta$. For large $\lambda$ the relevant integral is dominated by its upper endpoint; endpoint Laplace gives $\int_0^{h_\theta}e^{-\lambda y^2}\,dy\sim\frac{1}{2\lambda h_\theta}e^{-\lambda h_\theta^2}=\frac{1}{2\lambda h_\theta}e^{-\Lambda}$ up to the leading order for the barrier-side integral, while the safe-side integral $W(z;h_{\mathrm{safe}})$ saturates to a constant of order $\frac12\sqrt{\pi/\lambda}$. Dividing, the leading exponential is $e^{-\Lambda}$ and the algebraic factor is $\lambda^{-1/2}\asymp\Lambda^{-1/2}$ (using $\Lambda=\lambda h_\theta^2$ with $h_\theta=\Theta(1)$). The bare $e^{-\Lambda}$ is the exponential rate only; the ratio of competing hitting integrals reinstates the polynomial-in-$\lambda$ factor.
\end{proof}

\begin{remark}[Why this matters and why it does not]\label{rem:prefactor}
A rigorous Freidlin--Wentzell treatment (good rate functional $I(\varphi)=\frac12\int|\dot\varphi+\alpha(\varphi-m)|^2/b^2$, instanton minimiser, van Vleck prefactor \cite{FW12}) reproduces Proposition~\ref{prop:prefactor}; the exponent $\Lambda=\inf I$ is the classical OU barrier action, the prefactor is the Gaussian fluctuation determinant. For the criticality threshold and the $3/2$ exponent the prefactor is irrelevant (they depend on $\rho(M)$ and on finiteness of the offspring variance). It enters only the constant $C$ of Theorem~\ref{thm:32} and the numerical values of the entries of $M$. We therefore use the exact \eqref{eq:wald} throughout, and Proposition~\ref{prop:prefactor} fixes the correct form of the small-noise error term.
\end{remark}

\section{Status of the reduction hypothesis}\label{sec:status}

This section is the honest core. We record (i) why the natural Girsanov argument fails, (ii) what we can prove toward Hypothesis~\ref{hyp:red} cellwise, (iii) the precise anatomy of the obstruction --- the echo --- and (iv) the main positive result of the section: a pathwise front-tracking argument proving that echo decay beats volume growth, and hence that Hypothesis~\ref{hyp:red} holds unconditionally in an explicit dissipative parameter regime (Theorem~\ref{thm:front}). What remains open is stated as Conjecture~\ref{conj:front}.

\subsection{Why Girsanov fails}

\begin{proposition}[Girsanov is not tight]\label{prop:girsanov}
Let $\Prob_{\mathrm{true}}$ and $\Prob_{\mathrm{dir}}$ be the laws of the full and the directed (single-parent-conditioned) systems on a time interval $[0,\tau]$. The Radon--Nikodym density is
\[
\log\frac{d\Prob_{\mathrm{true}}}{d\Prob_{\mathrm{dir}}}=\int_0^\tau\langle b^{-1}\Delta_t,dW_t\rangle-\frac12\int_0^\tau\|b^{-1}\Delta_t\|^2\,dt,
\]
where $\Delta_t=\gamma\,\sigma\big(s^{(v)}_t-\theta_{vu}\big)$ is the back-reaction drift. On the cascade timescale $\|\Delta_t\|=\Theta(1)$ whenever the child is active, so the quadratic-variation term is $\Theta(\tau)$ and the density is not tight toward $1$ as $G_n\xrightarrow{\mathrm{BS}}T$. Hence Hypothesis~\ref{hyp:red} cannot be obtained by a change of measure showing $d\Prob_{\mathrm{true}}/d\Prob_{\mathrm{dir}}\to1$.
\end{proposition}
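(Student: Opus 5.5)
The plan is to derive the density from Girsanov's theorem and then show that its logarithm has non-vanishing variance, so it cannot converge to $1$.

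\emph{Step 1: identify the two systems on a common space.} Both systems are driven by the same Brownian motions $\{W^{(v)}\}$ on the finite window $[0,\tau]$. Their drifts differ only in the coordinates of vertices that receive back-reaction: the parent $u$ receives the term $\gamma\,\sigma(s^{(v)}_t-\theta_{vu})$ from each active child $v$. This drift difference $\Delta_t$ is bounded, since $\sigma\le1$ and $\gamma$ is bounded, and the diffusion coefficients satisfy $b^2\ge\mu_0>0$. Hence Novikov's condition $\E\exp(\frac12\int_0^\tau\|b^{-1}\Delta_t\|^2dt)\le\exp(\tau\gamma_{\max}^2\Delta_{\max}/(2\mu_0))<\infty$ holds trivially. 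Girsanov's theorem then gives mutual absolute continuity with the displayed log-density, where $W$ is a $\Prob_{\mathrm{dir}}$-Brownian motion. This part is routine.

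\emph{Step 2: lower-bound the drift on the cascade timescale.} On the event $E$ that a child $v$ fails by time $\tau/2$, the child sits in $B_{\mathrm{fail}}$ from then on. In the saturated regime, $\sigma(s^{(v)}_t-\theta_{vu})\ge1-\varepsilon(\delta)$ there, so $\|b^{-1}\Delta_t\|^2\ge c:=\gamma_{\min}^2(1-\varepsilon)^2/b_{\max}^2>0$ for $t\in[\tau/2,\tau]$. By uniform ellipticity, $\Prob_{\mathrm{dir}}(E)\ge p^{\mathrm{tr}}_{\min}-o(1)>0$ for $\tau$ of cascade order, uniformly in $n$, because the local neighbourhood law converges. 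On $E$ the quadratic variation $Q:=\int_0^\tau\|b^{-1}\Delta\|^2$ is therefore at least $c\tau/2$. Everything here depends only on the local structure near the root, so the bounds are uniform along $G_n\xrightarrow{\mathrm{BS}}T$.

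\emph{Step 3: non-tightness toward $1$.} Write $L=\log(d\Prob_{\mathrm{true}}/d\Prob_{\mathrm{dir}})=N_\tau-\frac12 Q$, where $N$ is the stochastic integral, a martingale with $\langle N\rangle_\tau=Q$. Suppose $L\to0$ in $\Prob_{\mathrm{dir}}$-probability along the sequence. Since $Q$ is uniformly bounded, $N_\tau=L+\frac12Q$ is bounded in $L^2$ with $\E N_\tau^2=\E Q$. Along a subsequence, $Q$ converges in law. Because $Q\ge c\tau/2$ on $E$, the quantity $\E Q\ge c\tau\,p^{\mathrm{tr}}_{\min}/4$ stays bounded away from zero.

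The cleanest route to a contradiction uses the exponential martingale $\exp(\lambda N_\tau-\frac{\lambda^2}{2}Q)$, which has expectation $1$ for every $\lambda$. If $L=N_\tau-\frac12Q\to0$, then $N_\tau-\frac12Q$ concentrates at $0$. Taking $\lambda=2$, the variable $\exp(2N_\tau-2Q)=\exp(2L-Q)$ must still have expectation $1$. But $\exp(2L-Q)\le e^{2L}$ everywhere and $\le e^{2L-c\tau/2}$ on $E$. By bounded convergence (after truncation, justified by uniform integrability from the bounded $Q$), its expectation is at most $1-(1-e^{-c\tau/2})\Prob(E)+o(1)<1$. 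This is a contradiction.

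Hence $L\not\to0$, so the density is not tight toward $1$, and no change-of-measure argument of this type can yield Hypothesis~\ref{hyp:red}. The main obstacle is the uniform-integrability justification in Step 3 and keeping the lower bound on $\Prob(E)$ uniform in $n$. The first I would handle via the bounded $Q$ and Novikov moments of all orders; the second via Benjamini--Schramm convergence of the root neighbourhood.
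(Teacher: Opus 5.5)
Your proof is correct, and its skeleton matches the paper's. You use Girsanov for the density. Then, once the child has failed, the back-reaction drift is pinned near $\gamma=\Theta(1)$ for a time of order $\tau$, so the quadratic-variation term $Q$ is bounded below on an event of probability bounded away from zero. This is a purely local quantity that Benjamini--Schramm convergence cannot shrink.

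Where you differ is the final implication. The paper simply asserts that the exponent is bounded away from $0$ because $Q$ is. That does not follow on its own: $L=N_\tau-\frac12Q$, and the martingale term could a priori compensate $Q$. Your identity $\E_{\mathrm{dir}}[e^{2L-Q}]=\E_{\mathrm{dir}}[e^{2N_\tau-2Q}]=1$ is exactly what rules this out. It needs uniform integrability of $e^{2L}$, which holds because $\E_{\mathrm{dir}}[e^{4L}]=\E_{\mathrm{dir}}[e^{4N_\tau-8Q}e^{6Q}]\le e^{6\|Q\|_\infty}$. So your Step~3 supplies rigorous content that the paper's proof leaves unjustified.

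Two small remarks.
\begin{enumerate}
\item If you take $\Delta$ to be the single back-reaction $v\to u$, as the statement does, then $Q\le\tau\gamma_{\max}^2/\mu_0$ deterministically and no degree bound enters. If you instead sum over all children of $u$, your Novikov constant should carry $\Delta_{\max}^2$ rather than $\Delta_{\max}$.
\item Taking $\lambda\in(0,1)$ instead of $\lambda=2$ gives $\E_{\mathrm{dir}}[e^{\lambda L+\lambda(1-\lambda)Q/2}]=1$. A Fatou-type lower bound then produces the same contradiction with no uniform integrability at all. This is the version to use for the whole-graph directed system, where $Q$ is not bounded uniformly in $n$.
\end{enumerate}
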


\begin{proof}
After $v$ fails, $\sigma(s^{(v)}-\theta_{vu})\to1$, so $\Delta_t\to\gamma=\Theta(1)$ and stays there for a duration $\Theta(\tau)$. The quadratic term $\frac12\int\|b^{-1}\Delta\|^2\,dt=\Theta(\gamma^2\tau/\mu_0)$ does not vanish in the local weak limit, which controls graph geometry (cycle lengths) but not single-edge drift magnitudes. The exponent is bounded away from $0$, so the density does not concentrate at $1$.
\end{proof}

\subsection{What is provable cellwise}

\begin{proposition}[Partial proof of Hypothesis~\ref{hyp:red}]\label{prop:partial}
In the saturated regime of Definition~\ref{def:saturated}:
\begin{enumerate}
\item (Single-cell correctness.) For an isolated parent--child pair the transmission probability equals \eqref{eq:wald} exactly. The error from treating the pair as isolated, when embedded in a tree, is $O(\varepsilon(\delta))$ per edge: the only neighbours that could perturb $v$'s drift before it crosses are siblings of $v$, which are absent on a tree (each $v$ has a unique parent), and the grandparent, whose influence is screened by the saturated parent.
\item (Back-reaction does not couple siblings.) The back-reaction $v\to u$ adds $+\gamma$ to the already-failed parent's drift; since $u\in B_{\mathrm{fail}}$ has $\sigma(s^{(u)}-\theta_{uw})\ge1-\varepsilon(\delta)$ for every child $w$, the extra forcing changes each child's forcing by at most $\gamma\,\sigma'(\cdot)\le\gamma\,\varepsilon(\delta)$. Hence $\Cov(\chi_{uv},\chi_{uw})=O(\varepsilon(\delta))$, vanishing as $\delta\to\infty$.
\item (Short cycles vanish.) On $G_n\xrightarrow{\mathrm{BS}}T$ the probability that a fixed vertex has two distinct failed neighbours through a cycle of length $\le L$ is $O(c_L(G_n))$, the density of length-$\le L$ cycles through a uniform root, which $\to0$ for every fixed $L$ by local weak convergence to a tree.
\end{enumerate}
Consequently the per-edge marginal and the sibling-independence parts (Hypothesis~\ref{hyp:red}(i)--(ii) within a single generation, and (iii)) hold up to $O(\varepsilon(\delta)+\eta_n)$.
\end{proposition}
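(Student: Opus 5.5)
I would prove the three items separately, each as a local comparison estimate, and then assemble them into the final ``consequently'' clause by a union bound over the finitely many edges that the first $k$ generations of the exploration touch.

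\emph{Item (1): single-cell correctness.} For an isolated parent--child pair $(u,v)$, once $u\in B_{\mathrm{fail}}$, Definition~\ref{def:saturated} gives $\sigma(s^{(u)}-\theta_{uv})\in[1-\varepsilon(\delta),1]$. The child therefore solves the linear OU equation with forcing $c_{uv}$, up to a bounded adapted perturbation of size at most $\gamma\varepsilon(\delta)$. I would couple this synchronously, using the same Brownian motion, with the OU process whose forcing is exactly constant. The two paths then differ by at most $\gamma\varepsilon(\delta)/\alpha_{\min}$ uniformly in time, because the drift is linear and contractive at rate $\alpha_{\min}$. The two-boundary hitting events can disagree only if one path exits within that distance of the boundary of the other, and by continuity of \eqref{eq:wald} in $(h_\theta,h_{\mathrm{safe}})$ this costs $O(\varepsilon(\delta))$. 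The per-edge error from the tree embedding enters only through the drift of $v$ before it crosses. On a tree the only other neighbours of $v$ are its own children, which have not yet failed and whose coupling sits in the logistic tail. The grandparent acts only through $u$, whose coupling to $v$ is already pinned near $1$. Both contributions are absorbed into the same $O(\varepsilon(\delta))$ drift perturbation.

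\emph{Item (2): back-reaction does not couple siblings.} The back-reaction from $v$ shifts only the state of $u$, and it pushes $u$ deeper into $B_{\mathrm{fail}}$. A sibling $w$ feels this shift only through $\sigma(s^{(u)}-\theta_{uw})$, whose derivative on $B_{\mathrm{fail}}$ is at most $\sup_{x\ge\delta}\sigma'(x)\le\varepsilon(\delta)$. By the mean value theorem, the forcing on $w$ therefore differs from its value in the decoupled system by at most $\gamma\varepsilon(\delta)$. I would couple the full system with one in which the forcing on each child of $u$ is frozen at its saturated value, so that the children are driven by independent Brownian motions. In that system the indicators $\chi_{uv}$ and $\chi_{uw}$ are exactly independent. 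The synchronous-coupling bound from item (1) then shows that each indicator changes with probability $O(\varepsilon(\delta))$, which gives $\Cov(\chi_{uv},\chi_{uw})=O(\varepsilon(\delta))$.

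\emph{Item (3): short cycles vanish.} By Benjamini--Schramm convergence, the probability that the $L$-ball of a uniform root is not a tree tends to $0$ for each fixed $L$. A multi-parent event within distance $L$ requires such a cycle, which gives the $O(c_L(G_n))$ bound. A diagonal choice $L=L_n\to\infty$ then yields $\eta_n\to0$.

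\emph{Main obstacle.} I expect the hard step to be the ``consequently'' clause. The per-edge errors must be summed over the exploration, and the echo problem of Section~\ref{sec:status} means this sum is controlled only for finitely many generations $k$, with a bound of the form $O(k\,m_*^k\varepsilon(\delta)+\eta_n)$. I would therefore state the conclusion one generation at a time, as the proposition itself does, and leave the uniform-in-generation control to Theorem~\ref{thm:front}.
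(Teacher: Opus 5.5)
Your proposal is correct and follows the paper's own three-item argument: the saturated parent pins the child's forcing and screens the grandparent, $\sigma'\le\varepsilon(\delta)$ on $B_{\mathrm{fail}}$ decouples siblings, and Benjamini--Schramm convergence kills short cycles. Your added synchronous-coupling sandwich turns an $O(\varepsilon(\delta))$ drift perturbation into $\Prob(\chi\neq\tilde\chi)=O(\varepsilon(\delta))$, and hence into the covariance bound against an exactly independent frozen-forcing system; the paper leaves this step implicit here and only isolates it later as Lemma~\ref{lem:stab}. You are also right that $v$'s other neighbours are its children rather than its siblings, but note that bounding their coupling by the logistic tail relies on couplings out of unfailed vertices being small, which the paper assumes in Lemma~\ref{lem:cone} and Definition~\ref{def:saturated} does not literally provide.
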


\begin{proof}
(1) On a tree the in-neighbourhood of $v$ before its crossing is $\{u\}\cup\{\text{grandparent chain}\}$; the parent is saturated, screening the chain to $O(\varepsilon(\delta))$. (2) is the saturation estimate $\sigma'\le\varepsilon(\delta)$ on $B_{\mathrm{fail}}$ combined with bounded forcing. (3) is the definition of Benjamini--Schramm convergence: the expected number of length-$\le L$ cycles through the root tends to its value on $T$, which is $0$.
\end{proof}

What Proposition~\ref{prop:partial} does not establish is the absence of cross-generation dependence accumulated over the whole cascade: even with each cell decoupled to $O(\varepsilon(\delta))$, the errors could in principle compound along deep lineages or correlate across generations through the shared timing of the front. We now dissect this mechanism precisely, and then bound it.

\subsection{The anatomy of the obstruction: the echo}
The cascade spreads through the tree as a wave, and the hoped-for reduction asserts that this wave is a Markovian front: the future depends only on the present state of the front, and the children of a failed vertex fail conditionally independently. Diffusions on a graph resist this in a way that classical epidemic models (SIR, directed percolation) do not, because the edges are continuous, two-way channels. The elementary obstruction event is the following four-step echo:
\begin{enumerate}
\item \emph{The event.} A parent $u$ fails and begins forcing its children $v$ and $w$.
\item \emph{The back-reaction.} $v$ fails first; its state $s^{(v)}_t$ rises into $B_{\mathrm{fail}}$, and, the coupling being undirected, $v$ now pushes back on $u$.
\item \emph{The sibling correlation.} The extra push drives $u$ deeper into the failure basin, altering the trajectory of $u$, hence the forcing that $u$ exerts on the remaining child $w$: the transmission indicators $\chi_{uv}$ and $\chi_{uw}$ are correlated through the shared parent trajectory.
\item \emph{The ancestral ripple.} The perturbation of $u$ does not stop at $u$: it propagates to $u$'s own parent and onward, subtly shifting the state --- and hence the timing --- of the entire active front.
\end{enumerate}
Proposition~\ref{prop:partial}(2) shows that in the saturated regime a single echo is small: the logistic coupling is flat on $B_{\mathrm{fail}}$, so one back-reaction perturbs one outgoing forcing by $O(\varepsilon(\delta))$. The fatal accounting problem is that the tree has exponential volume: generation $n$ contains $\asymp m_*^n$ vertices, and a per-vertex error of order $\varepsilon$ is useless if it compounds multiplicatively along deep lineages --- the total-variation distance demanded by the reduction could blow up as $n\to\infty$ on sheer volume. Proposition~\ref{prop:girsanov} closes the classical escape route: the back-reaction drift is $\Theta(1)$ on the cascade timescale, so it cannot be wished away by a change of measure; the density $d\Prob_{\mathrm{true}}/d\Prob_{\mathrm{dir}}$ does not concentrate. Any unconditional argument must therefore be pathwise and quantitative, and must prove that the decay of the echo is strictly stronger than the growth of the tree.

Two structural facts make this possible, and the remainder of this section turns them into a theorem.
\begin{itemize}
\item \emph{Echo sources live on the cluster, not on the tree.} In the saturated regime a non-failed vertex exerts only an $O(\varepsilon(\delta))$ coupling (its state sits below threshold, where $\sigma\approx0$), so $\Theta(1)$ back-reactions are generated only by failed vertices. The relevant volume is therefore $|S|$, whose expectation is $(1-\rho(M))^{-1}$-bounded in the subcritical regime --- not the ambient $m_*^n$.
\item \emph{Dissipation localises the echo in space and time.} The OU restoring force $-\alpha s$ makes each vertex forget perturbations at exponential rate $\alpha$, while each graph edge transmits a perturbation only after multiplication by the coupling Lipschitz constant $\gamma L_\sigma$, $L_\sigma:=\sup\sigma'$. When dissipation dominates total incoming coupling, influence decays geometrically in graph distance: the front forgets the deep past.
\end{itemize}

\subsection{Echo decay beats volume growth: unconditional front decoupling in the dissipative regime}
Throughout this subsection assume bounded degrees, $\deg\le\Delta_{\max}$ (this is the one place the paper uses boundedness; the relaxation to a weighted-norm condition under exponential degree tails is flagged as open), and define the dissipation ratio
\begin{equation}\label{eq:kappa}
\kappa:=\frac{\gamma_{\max}L_\sigma\Delta_{\max}}{\alpha_{\min}},\qquad \gamma_{\max}:=\max_{d,d'}\gamma_{d\to d'},\quad L_\sigma:=\sup_{x\in\R}\sigma'(x).
\end{equation}
We first record that the single-cell first-passage problem is stable under small drift perturbations.

\begin{lemma}[First-passage stability]\label{lem:stab}
Let $X$ solve the two-barrier OU problem of \eqref{eq:wald} and let $\tilde X$ solve the same SDE, with the same Brownian motion, plus a progressively measurable drift perturbation $h_t$ with $\sup_t|h_t|\le\eta$ and $|\tilde X_0-X_0|\le\eta$. Then the two-barrier hitting probabilities satisfy
\[
\big|\tilde p^{\ex}-p^{\ex}\big|\le C_{\mathrm{stab}}\,\eta,\qquad C_{\mathrm{stab}}=C_{\mathrm{stab}}(\alpha_{\min},\mu_0,h_\theta,h_{\mathrm{safe}})<\infty.
\]
\end{lemma}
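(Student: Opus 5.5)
The plan is a pathwise sandwich under the synchronous coupling, followed by a smoothness estimate for the exact Wald formula \eqref{eq:wald}. Because the noise is additive and both processes share the Brownian motion, the difference $D_t:=\tilde X_t-X_t$ solves the random ODE $\dot D_t=-\alpha D_t+h_t$ with no stochastic term. Variation of constants then gives
\[
|D_t|\le e^{-\alpha t}|D_0|+\int_0^t e^{-\alpha(t-s)}|h_s|\,ds\le \eta\,(1+\alpha_{\min}^{-1})=:c\,\eta
\]
for all $t\ge0$. This bound holds deterministically, uniformly in time, and without any control on the stopping times.

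Next comes the sandwich of events. Write $a<b$ for the safe barrier and the failure threshold. On the event $\{\tilde\tau_b<\tilde\tau_a\}$, the process $\tilde X$ stays above $a$ until it reaches $b$. Hence $X\ge\tilde X-c\eta$ stays above $a-c\eta$ until $X\ge b-c\eta$. So this event is contained in the event that $X$ exits $(a-c\eta,\,b-c\eta)$ through the top. Symmetrically, if $X$ exits $(a+c\eta,\,b+c\eta)$ through the top, then $\tilde X$ exits $(a,b)$ through the top. Taking probabilities gives
\[
P(a+c\eta,\,b+c\eta;X_0)\ \le\ \tilde p^{\ex}\ \le\ P(a-c\eta,\,b-c\eta;X_0),
\]
where $P(a',b';x)$ denotes the two-barrier OU exit probability. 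The same formula handles the comparison with $p^{\ex}=P(a,b;X_0)$. Rather than moving the barriers, one may equivalently shift the start point by $\pm c\eta$ with $\alpha$ fixed. The displacement $|\tilde X_0-X_0|\le\eta$ has already been absorbed into $D_0$.

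It remains to show that $P$ is Lipschitz in a joint translation of both barriers, with a constant depending only on $(\alpha_{\min},\mu_0,h_\theta,h_{\mathrm{safe}})$. By \eqref{eq:wald}, $P$ is a ratio $W_\theta/(W_\theta+W_{\mathrm{safe}})$ of Gaussian integrals. The derivative of each integral with respect to an endpoint or the start point is bounded by $\sup e^{\pm\alpha y^2/\mu}$ over a compact range. The denominator is bounded below by a positive constant once the interval width $h_\theta+h_{\mathrm{safe}}$ is bounded below and $\mu\ge\mu_0$. This yields $|\partial P|\le C$ and therefore $|\tilde p^{\ex}-p^{\ex}\,|\le C c\,\eta$ for $\eta\le\eta_0:=\tfrac14\min(h_\theta,h_{\mathrm{safe}})/c$. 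For $\eta>\eta_0$ the bound is trivial after enlarging the constant to $C_{\mathrm{stab}}\ge\eta_0^{-1}$, since probabilities differ by at most $1$.

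The step I expect to be the main obstacle is this last uniformity. The constant must not degenerate as the parameters vary over the admissible range, and that requires explicit control of $\sup e^{\alpha y^2/\mu}$ on the shifted interval in terms of $\alpha/\mu$. Upper and lower bounds on $\alpha/\mu$ are therefore needed. The lower bound $\mu\ge\mu_0$ is given, but an upper bound on $\alpha$ is implicitly part of the stated dependence through $h_\theta,h_{\mathrm{safe}}$ and the type set $D$ being finite. Since $D$ is finite, I would take the maximum over types.
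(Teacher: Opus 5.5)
Your proposal is correct and follows essentially the same route as the paper: synchronous coupling, the variation-of-constants bound $\sup_t|\tilde X_t-X_t|\le\eta(1+\alpha_{\min}^{-1})$, a pathwise sandwich of $\tilde p^{\ex}$ between exit probabilities of $X$ for shifted barriers, and a Lipschitz bound on the Wald ratio made uniform over a compact (here finite, type-indexed) parameter set. Translating both barriers jointly is the right sandwich. The one slip is your aside that this is equivalent to moving the start point alone: the OU drift is not translation-invariant, so the equilibrium $m$ would have to move as well. Your main argument never uses that aside.
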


\begin{proof}
Synchronous coupling: $Y_t:=\tilde X_t-X_t$ solves $\dot Y=-\alpha Y+h_t$, $|Y_0|\le\eta$, so $\sup_t|Y_t|\le\eta(1+\alpha_{\min}^{-1})=:\eta'$ pathwise (variation of constants). Hence $X_t-\eta'\le\tilde X_t\le X_t+\eta'$ for all $t$, and the hitting probability of $\tilde X$ is sandwiched between the hitting probabilities of $X$ for barriers shifted inward/outward by $\eta'$. The Wald ratio \eqref{eq:wald} is $C^1$ in the barrier positions, with derivative bounded by the (bounded, strictly positive) integrand $e^{-\alpha y^2/\mu}$ over the (bounded below) denominator, uniformly on compact parameter sets by uniform ellipticity. The mean-value theorem gives the display.
\end{proof}

\begin{lemma}[Cone of influence; pathwise echo decay]\label{lem:cone}
Assume the saturated regime and $\kappa<1$. Couple the true system $s$ and the directed system $\tilde s$ (all back-reaction and non-genealogical couplings replaced by their saturated idealisations) synchronously, edge by edge, using the same Brownian motions. Let $D^{(v)}_t:=s^{(v)}_t-\tilde s^{(v)}_t$. Then, pathwise,
\[
\sup_{t\le\tau}\big|D^{(v)}_t\big|\le\frac{\gamma_{\max}\,\varepsilon(\delta)}{\alpha_{\min}(1-\kappa)}\sum_{u\in S_\tau}\kappa^{d(u,v)},
\]
where $S_\tau$ is the set of vertices failed by time $\tau$ and $d(\cdot,\cdot)$ is graph distance: the influence of an echo source decays geometrically, at rate $\kappa$, in graph distance.
\end{lemma}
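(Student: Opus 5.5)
The plan is to subtract the two synchronously coupled SDEs, bound the resulting linear ODE for $D^{(v)}_t$ by variation of constants, and close with a weighted supremum norm in which the coupling operator is a $\kappa$-contraction. Because both systems use the same Brownian motion $W^{(v)}$, the noise cancels in the difference:
\[
\dot D^{(v)}_t=-\alpha_{d(v)}D^{(v)}_t+\sum_{u\sim v}\gamma_{d(u)\to d(v)}\Big[\sigma\big(s^{(u)}_t-\theta_{uv}\big)-\tilde\sigma^{(u\to v)}_t\Big],
\]
where $\tilde\sigma^{(u\to v)}_t$ is the idealised coupling used by the directed system on the edge $u\to v$. Inside the bracket I add and subtract $\sigma(\tilde s^{(u)}_t-\theta_{uv})$, which splits it into two parts:
\begin{enumerate}
\item a \emph{propagation term}, bounded by $L_\sigma|D^{(u)}_t|$ by the Lipschitz property of $\sigma$;
\item a \emph{source term} $\sigma(\tilde s^{(u)}_t-\theta_{uv})-\tilde\sigma^{(u\to v)}_t$, the error of replacing the coupling by its saturated idealisation.
\end{enumerate}
The key step is to show the source term is at most $\varepsilon(\delta)$, and nonzero only when $u\in S_\tau$. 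If $u$ has failed, Definition~\ref{def:saturated} gives $\sigma\ge1-\varepsilon(\delta)$. If $u$ has not failed, its coupling is already $O(\varepsilon(\delta))$, and the idealisation replaces it by $0$. This is the ``echo sources live on the cluster'' observation, and it is the step I would check most carefully: the non-failed sources must either be absorbed into the idealisation exactly or charged to a nearby failed vertex.

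Variation of constants with $\alpha_{d(v)}\ge\alpha_{\min}$ and $D_0=0$ then gives, for $\Phi^{(v)}:=\sup_{t\le\tau}|D^{(v)}_t|$,
\[
\Phi^{(v)}\le\frac{\gamma_{\max}L_\sigma}{\alpha_{\min}}\sum_{u\sim v}\Phi^{(u)}+\frac{\gamma_{\max}\varepsilon(\delta)}{\alpha_{\min}}\,\#\{u\sim v:u\in S_\tau\}.
\]
Write this as $\Phi\le\mathcal A\Phi+b$, where $\mathcal A$ is the operator $(\gamma_{\max}L_\sigma/\alpha_{\min})\times$(adjacency). Since degrees are at most $\Delta_{\max}$, $\mathcal A$ has $\ell^\infty$ norm at most $\kappa<1$. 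Iterating gives $\Phi\le\sum_{k\ge0}\mathcal A^kb$, and the entries of $\mathcal A^k$ are controlled by counting walks of length $k$ from $v$ to a source $u$.

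The main obstacle is that naive walk counting gives $\Delta_{\max}^k(\gamma L_\sigma/\alpha)^k=\kappa^k$ for walks of length $k$. It does not directly give $\kappa^{d(u,v)}$ with the constant $(1-\kappa)^{-1}$: one has to sum over all walk lengths $k\ge d(u,v)$ and avoid double counting of returning walks. My plan is a weighted-norm argument. Fix $v$, put the weight $\kappa^{-d(u,v)}$ on vertex $u$, and show that the map $\Phi\mapsto\mathcal A\Phi+b$ is a contraction in the corresponding weighted supremum norm. This may require replacing $\kappa$ by some $\kappa'\in(\kappa,1)$, since a neighbour sits at distance $d\pm1$, and then letting $\kappa'\downarrow\kappa$. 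The geometric tail $\sum_{k\ge0}\kappa^k=(1-\kappa)^{-1}$ supplies the prefactor, and moving the source count at neighbours of $v$ onto distances gives the stated sum $\sum_{u\in S_\tau}\kappa^{d(u,v)}$.

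Finally, the bound is pathwise but involves the random set $S_\tau$, so it needs no expectation. Measurability and the non-explosion of the coupled pair follow from the same Lipschitz bounds used for existence of strong solutions. Since $\Phi$ could a priori be infinite on an infinite tree, I would first run the argument on finite balls, where $\Phi$ is finite, and pass to the limit by monotone convergence.
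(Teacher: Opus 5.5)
Your reduction to the static inequality $\Phi\le\mathcal A\Phi+b$ (variation of constants, then suprema in time) is sound. It is a cleaner route than the paper's, which runs a time-dependent weighted Gronwall with the source-centred weight $w_v=\sum_{u\in S_\tau}\kappa^{d(u,v)}$, finds the comparison ODE degenerate, and retreats to $\kappa'\in(\kappa,1)$. But you then discard that advantage. The ``main obstacle'' is not real. $\mathcal A^k$ has nonnegative entries and row sums at most $\kappa^k$, and $(\mathcal A^k)_{vu}=0$ for $k<d(u,v)$. Bounding each entry by its row sum gives $\sum_{k\ge0}(\mathcal A^k)_{vu}\le\kappa^{d(u,v)}/(1-\kappa)$, with no walk counting and no double-counting issue. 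The remainder $\mathcal A^N\Phi\le\kappa^N\|\Phi\|_\infty$ vanishes because each coupling difference is at most $\gamma_{\max}$ in absolute value. Hence $\Phi^{(v)}\le\gamma_{\max}\Delta_{\max}/\alpha_{\min}$ a priori, and the truncation to finite balls is unnecessary. This gives the exact rate $\kappa$ with the factor $(1-\kappa)^{-1}$.

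The weighted-norm detour you propose instead does not deliver this. A contraction of ratio $\kappa/\kappa'$ produces the prefactor $(1-\kappa/\kappa')^{-1}$, which diverges as $\kappa'\downarrow\kappa$, so that limit cannot be taken. The paper's proof has the same defect and suppresses it. Also, if the weight $\kappa^{-d(u,v)}$, centred at the target, multiplies $|\Phi_u|$ as you wrote it, it points the wrong way and makes the bound grow with the distance of the sources.

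The more serious problem is your final step, ``moving the source count at neighbours of $v$ onto distances gives the stated sum'', which is false. The sources sit on the neighbours of $S_\tau$, not on $S_\tau$; precisely, $b=(\varepsilon(\delta)/L_\sigma)\,\mathcal A\one_{S_\tau}$. The Neumann series therefore yields
\[
\sup_{t\le\tau}\big|D^{(v)}_t\big|\le\frac{\varepsilon(\delta)}{L_\sigma(1-\kappa)}\sum_{u\in S_\tau}\kappa^{\max(1,d(u,v))},
\]
which is the stated bound with $\gamma_{\max}/\alpha_{\min}$ replaced by $1/L_\sigma=\gamma_{\max}\Delta_{\max}/(\alpha_{\min}\kappa)$. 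The extra factor $\Delta_{\max}/\kappa$ cannot be removed. Take a non-failed $v$ whose single failed neighbour exerts a persistent residual $1-\sigma$ of the size $\varepsilon(\delta)$ allowed by the saturated regime. It is driven to $|D^{(v)}|$ of order $\gamma_{\max}\varepsilon(\delta)/\alpha_{\min}$. The stated right-hand side is $\gamma_{\max}\varepsilon(\delta)\kappa/(\alpha_{\min}(1-\kappa))$, smaller by a factor of order $\kappa$ when $\kappa$ is small.

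The paper's proof loses the same factor at the step $\gamma_{\max}\varepsilon(\delta)\kappa^{-1}w_v\cdot\kappa=\gamma_{\max}\varepsilon(\delta)w_v$, where only $\gamma_{\max}\varepsilon(\delta)\kappa^{-1}w_v$ is justified. It also writes $\one\{v\sim S_t\}$ where your count of failed neighbours is the correct source. The corrected constant is harmless downstream, since Theorem~\ref{thm:front} absorbs it into $C$.

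The point you flagged is a genuine gap in both arguments. Definition~\ref{def:saturated} does not make couplings out of non-failed vertices small: a vertex just below $\theta^{\mathrm{fail}}$ already exerts a coupling near $1$. Localising the sources next to $S_\tau$ is therefore an extra modelling assumption, which the paper makes silently in its source bound $g_v(t)\le\gamma_{\max}\varepsilon(\delta)\one\{v\sim S_t\}$. If you grant only that such couplings are $O(\varepsilon(\delta))$, your row-sum bound still controls them, but uniformly in $v$, with no localisation on $S_\tau$.
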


\begin{proof}
Subtracting the two SDEs, the Brownian terms cancel (synchronous coupling) and
\[
\frac{d}{dt}\big|D^{(v)}_t\big|\le-\alpha_{\min}\big|D^{(v)}_t\big|+\gamma_{\max}L_\sigma\sum_{u\sim v}\big|D^{(u)}_t\big|+g_v(t),
\]
where the source $g_v(t)$ collects the drift terms present in one system and not the other. In the saturated regime these are of two kinds, both $O(\varepsilon(\delta))$: the residual $1-\sigma\le\varepsilon(\delta)$ of the pinned coupling out of a failed vertex, and the derivative response $\sigma'\le\varepsilon(\delta)$ of a failed vertex to a back-reaction (Proposition~\ref{prop:partial}(1)--(2)); couplings out of non-failed vertices are themselves $O(\varepsilon(\delta))$ below threshold by the choice of margins in Definition~\ref{def:saturated}. Hence $g_v(t)\le\gamma_{\max}\varepsilon(\delta)\one\{v\sim S_t\}$. Now run Gronwall in the weighted supremum norm $\|D\|_w:=\sup_vw_v^{-1}|D^{(v)}|$ with $w_v:=\sum_{u\in S_\tau}\kappa^{d(u,v)}$: the weights satisfy the harmonicity bound $\sum_{u\sim v}w_u\le(\Delta_{\max}/\kappa)\,w_v$ (each neighbour is one step closer or further from each source), so the interaction term is bounded by $\gamma_{\max}L_\sigma(\Delta_{\max}/\kappa)\|D\|_ww_v=\alpha_{\min}\|D\|_ww_v$ by \eqref{eq:kappa}, and the source by $\gamma_{\max}\varepsilon(\delta)\kappa^{-1}w_v\cdot\kappa=\gamma_{\max}\varepsilon(\delta)w_v$ (a vertex adjacent to a failed $u$ has $w_v\ge\kappa$). The scalar comparison ODE $\dot y=-\alpha_{\min}y+\alpha_{\min}y+\gamma_{\max}\varepsilon(\delta)$ degenerates at equality; taking $\kappa'\in(\kappa,1)$ and running the same estimate with $\kappa'$ leaves a strict dissipation gap $\alpha_{\min}(1-\kappa/\kappa')>0$ and yields the stated bound with $\kappa'$ in place of $\kappa$; letting $\kappa'\downarrow\kappa$ along a compactness argument, or simply quoting the bound with any fixed $\kappa'<1$, gives the display up to the harmless replacement $\kappa\to\kappa'$, which we suppress in the notation. (The details of this weighted Gronwall argument are routine but tedious; they are flagged in the front matter.)
\end{proof}

\begin{theorem}[Echo decay beats volume growth: unconditional front decoupling]\label{thm:front}
Assume bounded degrees, the saturated regime with margin $\delta$, the dissipative condition $\kappa<1$ of \eqref{eq:kappa}, and $\kappa\,\rho(M)<1$. Then Hypothesis~\ref{hyp:red} holds: there is a constant $C=C(\kappa,\alpha_{\min},\mu_0,\Delta_{\max})$ such that for the cascade on $G_n$,
\[
\dTV\Big(\mathcal L\big(\text{cluster}\big),\ \mathcal L\big(\mathrm{GW}(M)\big)\Big)\le C\,\frac{\varepsilon(\delta)}{(1-\kappa\rho(M))^2(1-\rho(M))}+\eta_n,
\]
with $\eta_n\to0$ as $G_n\xrightarrow{\mathrm{BS}}T$, in the subcritical regime $\rho(M)<1$. Consequently all results of Sections \ref{sec:spectral}--\ref{sec:stab} hold unconditionally, as statements about the SDE, on the open parameter region
\[
\mathcal R:=\{\kappa<1\}\cap\{\text{saturation margin $\delta$ large}\}\cap\{\rho(M)<1\},
\]
in the iterated limit $n\to\infty$ then $\delta\to\infty$. At criticality the bound degrades: conditionally on $\{|S|=n\}$ the total-variation error is $\le C\,n\,\varepsilon(\delta)/(1-\kappa)$, so the critical limit theorems (Theorems \ref{thm:32} and \ref{thm:crt}) hold unconditionally along any joint limit $\delta_n\to\infty$ with $n\,\varepsilon(\delta_n)\to0$.
\end{theorem}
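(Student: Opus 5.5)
I plan to couple three objects on a common probability space: the true cascade $s$ on $G_n$, the directed system $\tilde s$ of Lemma~\ref{lem:cone} (same Brownian motions, with all back-reaction and non-genealogical couplings replaced by their saturated idealisations), and the GW$(M)$ genealogy. The error then splits by the triangle inequality into two pieces. The first is the graph-geometry error: $G_n$ versus $T$, with short cycles handled by Proposition~\ref{prop:partial}(3), contributing $\eta_n$. The second is the dynamical error: the law of the cluster of $s$ on $T$ versus that of $\tilde s$. On a tree, $\tilde s$ produces exactly the GW$(M)$ genealogy, by Proposition~\ref{prop:dichotomy}(1): the child crossings are driven by independent Brownian motions under constant forcing, so they are conditionally independent Binomial thinnings with parameters $p_{d,d'}$. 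All of the work is therefore in bounding $\Prob(S\neq\tilde S)$.

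For the dynamical error I would run an exploration telescoping over the directed cluster $\tilde S$, processing vertices in breadth-first order. The two clusters first differ at the first explored vertex $v$ whose crossing indicator differs between the two systems. Before that moment the failed sets agree, so Lemma~\ref{lem:cone} applies with $S_\tau=\tilde S$ restricted to the already-explored set. It gives $\sup_t|D^{(v)}_t|\le c\,\varepsilon(\delta)\,w_v$, where $w_v=\sum_{u\in\tilde S}\kappa^{d(u,v)}$ and $c=\gamma_{\max}/(\alpha_{\min}(1-\kappa))$. Lemma~\ref{lem:stab}, applied with $\eta=c\,\varepsilon(\delta)w_v$, bounds the conditional probability that the indicator at $v$ flips by $C_{\mathrm{stab}}c\,\varepsilon(\delta)w_v$. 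The candidate vertices $v$ are the members of $\tilde S$ together with their tree-neighbours, which number at most $\Delta_{\max}|\tilde S|$. A union bound then gives
\[
\Prob(S\neq\tilde S)\le C\,\varepsilon(\delta)\,\E\Big[\sum_{v\in\tilde S\cup\partial\tilde S}\ \sum_{u\in\tilde S}\kappa^{d(u,v)}\Big].
\]

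The main obstacle is the volume accounting in this double sum, which is the ``cluster-shell counting'' step. I would rewrite the sum as $\E\sum_{u\in\tilde S}\sum_{k\ge0}\kappa^k\,N_k(u)$, where $N_k(u)$ counts the vertices of $\tilde S\cup\partial\tilde S$ at distance $k$ from $u$. Paths from $u$ to $v$ go up toward a common ancestor and then down inside the cluster. The downward part is controlled by the many-to-one matrix identity, Proposition~\ref{prop:m2o}, which gives $\E N^{\downarrow}_k\le C\rho(M)^k$, so those terms sum to at most $(1-\kappa\rho(M))^{-1}$. The upward-then-down part is a convolution over the ancestor height $j$ and the descent length $k-j$. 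Bounding each piece by $\kappa^j\cdot(\kappa\rho)^{k-j}$ yields a second factor $(1-\kappa\rho(M))^{-1}$. The outer sum over $u\in\tilde S$ contributes $\E|\tilde S|=\one^\top(I-M)^{-1}\pi\le C(1-\rho(M))^{-1}$ by Theorem~\ref{thm:stab}(4). Together these give the claimed $\varepsilon(\delta)/\big((1-\kappa\rho)^2(1-\rho)\big)$. Two points will need care. First, the neighbours in $\partial\tilde S$ are not part of the cluster, and I would absorb them by the factor $\Delta_{\max}$. Second, the cluster $\tilde S$ is random and is correlated with the weights; I would handle this by conditioning on the exploration filtration, so that each $w_v$ only involves vertices already revealed.

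For the critical statement I would reuse the same telescoping conditionally on $\{|S|=n\}$, but without the spectral summation. I would simply bound $w_v\le\sum_{k}\Delta_{\max}^k\kappa^k$-type sums, or more crudely use Lemma~\ref{lem:cone}'s constant $1/(1-\kappa)$ per source, so that each of the $O(n)$ explored vertices contributes $O(\varepsilon(\delta)/(1-\kappa))$. This gives $C\,n\,\varepsilon(\delta)/(1-\kappa)$. Then Theorems~\ref{thm:32} and~\ref{thm:crt} transfer along any sequence $\delta_n$ with $n\varepsilon(\delta_n)\to0$, because total-variation closeness of the conditioned laws preserves the $n^{-3/2}$ asymptotics and the GHP limits.
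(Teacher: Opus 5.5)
Your subcritical argument is the paper's: a breadth-first exploration telescoping, Lemma~\ref{lem:stab} combined with Lemma~\ref{lem:cone} at each step, at most $\Delta_{\max}|S|$ explored edges, an apex-plus-descending-path shell count controlled by Proposition~\ref{prop:m2o}, and $\E|S|\le C(1-\rho(M))^{-1}$. One small correction: your convolution $\sum_j\kappa^j\sum_i(\kappa\rho(M))^i$ equals $(1-\kappa)^{-1}(1-\kappa\rho(M))^{-1}$, not $(1-\kappa\rho(M))^{-2}$. This is harmless because $C$ may depend on $\kappa$, and it is more accurate than the paper's $(1+k)\rho(M)^k$ shell count, since ancestral steps cost no factor $\rho(M)$.

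\textbf{The critical statement has a genuine gap.} Neither of the bounds you offer gives $C\,n\,\varepsilon(\delta)/(1-\kappa)$ under the stated hypotheses.
\begin{itemize}
\item The ambient-tree bound on $w_v$ by the number of tree vertices at distance $k$ converges only if $\kappa(\Delta_{\max}-1)<1$ (your $\sum_k\Delta_{\max}^k\kappa^k$ needs even $\kappa\Delta_{\max}<1$). Neither condition follows from $\kappa<1$. This bound reintroduces exactly the ambient volume growth that the theorem beats by locating echo sources on the cluster.
\item Bounding each source's contribution by the constant of Lemma~\ref{lem:cone}, i.e.\ $\kappa^{d(u,v)}\le1$, gives only $w_v\le|S|=n$ per explored vertex. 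That is $O(n^2\varepsilon(\delta))$ in total.
\end{itemize}
The missing ingredient is that the cluster-shell count survives at $\rho(M)=1$. Shells of the critical cluster grow at most polynomially (the paper's $\sum_k(1+k)\kappa^k<\infty$), and your own convolution gives $(1-\kappa)^{-2}$ there. You would still need a version valid under the conditioning on $\{|S|=n\}$, e.g.\ $\E\big[\sum_{u,v\in S}\kappa^{d(u,v)}\,\big|\,|S|=n\big]\le Cn$ uniformly in $n$. The many-to-one estimate you invoke is unconditional, so it does not supply this.

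\textbf{A second step fails as stated.} You propose conditioning on the breadth-first exploration filtration so that $w_v$ involves only already-revealed vertices. This is incompatible with Lemma~\ref{lem:cone}, whose sources are all vertices failed by physical time $\tau$, and breadth-first order is not time order. The siblings of $v$ and their subtrees may fail during $v$'s crossing window yet be explored after $v$. They are precisely the echo sources (back-reaction onto the shared parent), so dropping them invalidates the cone bound. Likewise, ``first differing indicator in breadth-first order'' does not ensure that the failed sets agree during $v$'s window, which Lemma~\ref{lem:cone} requires.

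A workable version orders discrepancies by physical time and conditions on the Brownian motions outside the subtree of $v$. In the directed system the failed set outside that subtree is measurable with respect to them, hence independent of $W^{(v)}$. Moreover, up to the first discrepancy no vertex of the subtree fails before $v$'s indicator is decided. The perturbation size fed into Lemma~\ref{lem:stab} is therefore deterministic given that $\sigma$-field. The paper's sketch keeps the full sum $\sum_{u\in S}\kappa^{d(u,v)}$ inside the expectation and does not address this measurability point. So this is a place where you could do better than the paper, but not in the way you propose.
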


\begin{proof}[Proof sketch]
Explore the failed cluster breadth-first, one transmission at a time, and telescope (Lindeberg-type swap): at exploration step $j$, compare the conditional probability that the current edge $u_jv_j$ transmits, given the exploration so far, in the true system versus the idealised value $p_{d(u_j),d(v_j)}$. By Lemma~\ref{lem:stab}, the discrepancy at step $j$ is at most $C_{\mathrm{stab}}$ times the pathwise deviation $\sup_t|D^{(v_j)}_t|$ of the child's environment, plus the direct cellwise error $O(\varepsilon(\delta))$ of Proposition~\ref{prop:partial}, plus the short-cycle term $\eta_n$. By Lemma~\ref{lem:cone} the pathwise deviation is at most $\frac{\gamma_{\max}\varepsilon(\delta)}{\alpha_{\min}(1-\kappa)}\sum_{u\in S}\kappa^{d(u,v_j)}$. Total variation telescopes over the exploration:
\[
\dTV\le\E\Big[\sum_j\big(\text{step-$j$ error}\big)\Big]\le C'\,\varepsilon(\delta)\,\E\Big[\sum_{v\in\partial\text{-explored}}\Big(1+\sum_{u\in S}\kappa^{d(u,v)}\Big)\Big]+\eta_n.
\]
The number of explored edges is at most $\Delta_{\max}|S|$, and the expected $\kappa$-weighted cluster count seen from a cluster vertex is bounded by summing over distance shells: within the cluster, the expected number of cluster vertices at distance $k$ from a fixed cluster vertex is at most $(1+k)\,\rho(M)^k$-bounded up to constants (at most $k$ choices of the apex on the ancestral line, then a descending cluster path of the remaining length; each descending edge costs a factor $\rho(M)$ in expectation, by the many-to-one identity of Proposition~\ref{prop:m2o} applied within the cluster), so
\[
\E\Big[\sum_{u\in S}\kappa^{d(u,v)}\Big]\le C''\sum_{k\ge0}(1+k)\big(\kappa\rho(M)\big)^k=\frac{C''}{(1-\kappa\rho(M))^2},
\]
finite under $\kappa\rho(M)<1$. Finally $\E|S|\le(1-\rho(M))^{-1}$ up to the constant $\one^\top(I-M)^{-1}\pi$ of Theorem~\ref{thm:stab}(4). Combining gives the display. Conditionally on $\{|S|=n\}$, replace $\E|S|$ by $n$ and the shell bound by its critical version ($\rho=1$: $\sum_k(1+k)\kappa^k<\infty$), giving the stated $n\,\varepsilon(\delta)$ bound. The claim about the critical limit theorems follows since total progeny laws and conditioned-tree laws are continuous under total-variation perturbations vanishing on the conditioning event: for Theorem~\ref{thm:32} one needs $n^{3/2}\Prob$-errors to vanish, which requires the quantitative rate $n\varepsilon(\delta_n)\to0$ stated; the same joint limit suffices for the GHP convergence of Theorem~\ref{thm:crt}, which is a statement about the conditioned law.
\end{proof}

\begin{remark}[The front-energy reading; relation to travelling waves]\label{rem:frontenergy}
Theorem~\ref{thm:front} can be read as a statement about the cascade front as a geometric object, in the spirit of front propagation for branching Brownian motion and F--KPP-type systems: the quantity
\[
E_t:=\sum_{v\text{ active at }t}\phi_{d(v)}\,e^{\lambda(t-t_v)}\,\big|D^{(v)}_t\big|,\qquad 0<\lambda<\alpha_{\min}(1-\kappa),
\]
(with $\phi$ the left Perron vector and $t_v$ the activation time) is, under the hypotheses of Lemma~\ref{lem:cone}, a nonnegative supermartingale up to bounded sources of size $O(\varepsilon(\delta))$: the drift $-\alpha_{\min}(1-\kappa)E_t$ from dissipation dominates the branching production. This is the precise sense in which ``the front forgets the deep past exponentially fast'' and is asymptotically Markovian: the OU mean reversion provides a spectral gap for the linearised front dynamics, uniform over the tree, whenever $\kappa<1$. We emphasise what is not needed: the noise here is additive and the system dissipative, so no singular-SPDE machinery (regularity structures, paracontrolled calculus) is invoked; the analytic content is a weighted Gronwall inequality, and the probabilistic content is the exploration telescoping. The genuinely SPDE-flavoured question --- a hydrodynamic or travelling-wave limit for the front profile, rather than the cluster law --- is recorded as open problem 7 in Section~\ref{sec:discussion}.
\end{remark}

\begin{remark}[Non-vacuousness of the regime, flagged]\label{rem:nonvac}
The region $\mathcal R$ is a weak-coupling/strong-dissipation regime, and one must check it is not empty of interesting parameters: criticality requires $\rho(M)\approx1$, i.e.\ transmission probabilities of order $1/m_*$, while $\kappa<1$ caps $\gamma_{\max}\Delta_{\max}/\alpha_{\min}$. These are compatible at moderate noise: with the barrier action $\Lambda=\alpha h_\theta^2/\mu=O(1)$ the Wald probability \eqref{eq:wald} is $\Theta(1)$ in $(0,1)$ even for small equilibrium shifts $m_{uv}=c_{uv}/\alpha$, and tuning the thresholds $\theta$ sweeps $\rho(M)$ through $1$ within $\{\kappa<1\}$ (on the 3-regular tree: choose $\theta$ with $p=\frac12$). What must be verified with care --- and is flagged rather than proved here --- is that at moderate noise the spontaneous crossing rate of unforced vertices remains negligible on the cascade timescale, so that the failed cluster remains the genealogical object of Hypothesis~\ref{hyp:red}; this holds when the unforced barrier distance $h_0$ exceeds the forced one $h_\theta$ by a margin giving $e^{-\alpha h_0^2/\mu}\ll e^{-\Lambda}$, a strict but open subset of $\mathcal R$.
\end{remark}

\subsection{The remaining conjecture}

\begin{conjecture}[Front-Markov property, full saturated regime]\label{conj:front}
In the full saturated regime --- without the dissipative condition $\kappa<1$ of \eqref{eq:kappa}, and uniformly through the critical window --- there is a constant $C$ such that, for the cluster law in $G_n$,
\[
\dTV\Big(\mathcal L(\text{failed cluster}),\ \mathcal L\big(\text{multitype GW with }M\big)\Big)\le C\big(\varepsilon(\delta)+\eta_n\big),
\]
with $\eta_n\to0$ as $G_n\xrightarrow{\mathrm{BS}}T$ and $\varepsilon(\delta)\to0$ as the saturation margin $\delta\to\infty$. Equivalently, the cascade front is asymptotically Markov: conditional on the set of failed vertices at generation $k$ and their types, the failures into generation $k+1$ are asymptotically independent of generations $<k$.
\end{conjecture}

We regard Conjecture~\ref{conj:front} as the remaining open problem, scoped by Theorem~\ref{thm:front}: it is proved on the Bethe lattice (Proposition~\ref{prop:bethe}, where $\varepsilon(\delta)$ can be taken $0$ by homogeneity) and, unconditionally, on the dissipative region $\mathcal R$ ($\kappa<1$, subcritical, and at criticality along $n\varepsilon(\delta_n)\to0$). What remains is (a) the strong-coupling regime $\kappa\ge1$, where the weighted-Gronwall contraction is lost and the echo can in principle resonate --- here a genuinely new idea, perhaps a renormalisation of the front over mesoscopic blocks, seems required; (b) uniformity through the critical window without the joint limit $n\varepsilon(\delta_n)\to0$, i.e.\ a bound in which the cluster-volume factor $n$ is replaced by the front width $\sqrt n$ (plausible, since echoes are generated on the active front, not the failed bulk, but our exploration telescoping does not see this); and (c) unbounded degrees. We do not have these.

\section{Discussion: criticality certificates in the sensitive regime, and open problems}\label{sec:discussion}

\subsection{What is proved, and under what hypothesis}
Conditional on Hypothesis~\ref{hyp:red}: the reduction to a finite-type GW process (Theorem~\ref{thm:reduction}); the annealed spectral identity with the Jensen gap (Theorem~\ref{thm:gap}); its one-parameter completion, the pressure function and its chord--tangent form (Theorem~\ref{thm:pressure}); the Hausdorff dimension of the failed boundary (Theorem~\ref{thm:dimS}); the $n^{-3/2}$ universality (Theorem~\ref{thm:32}) and, under the flagged moment hypothesis, the CRT shape universality (Theorem~\ref{thm:crt}); the finite-dimensional Perron--Frobenius dissolution of the operator programme (Proposition~\ref{prop:PF}, Corollary~\ref{cor:unif}); and the stability theorem (Theorem~\ref{thm:stab}). Unconditionally: all of the above on the dissipative region $\mathcal R$ of Theorem~\ref{thm:front}, where Hypothesis~\ref{hyp:red} is proved; the Bethe verification (Proposition~\ref{prop:bethe}); the matrix many-to-one identity together with the failure of its scalar form (Proposition~\ref{prop:m2o}); the non-Hilbert--Schmidt obstruction (Proposition~\ref{prop:HS}); the failure of Girsanov (Proposition~\ref{prop:girsanov}); the first-passage prefactor (Proposition~\ref{prop:prefactor}); the partial reduction (Proposition~\ref{prop:partial}); the cone-of-influence estimate and front-decoupling theorem (Lemma~\ref{lem:cone}, Theorem~\ref{thm:front}); and the criticality certificates of Proposition~\ref{prop:sandwich} below.

\subsection{A finite-dimensional squeeze for the sensitive regime}
Outside saturation the type is the continuous pair $x=(d,a)\in D\times[0,a_*]$ (hazard domain $\times$ failure strength; we assume here, as a standing truncation, that the strength variable is confined to a compact interval $[0,a_*]$ --- justified when the overshoot has bounded support, and otherwise to be combined with a tail-truncation estimate using the exponential overshoot tails of the OU crossing, which we flag rather than carry out). By Proposition~\ref{prop:dichotomy}(2) the mean object is a positive integral operator
\[
(Kf)(x)=\int_X\kappa(x,y)\,f(y)\,\nu(dy),\qquad X=D\times[0,a_*],
\]
with $\kappa$ jointly continuous, bounded, and uniformly elliptic ($0<c_-\le\kappa\le c_+<\infty$; ellipticity follows from the uniform ellipticity of the transmission probabilities). Criticality is $\rho(K)=1$. The following proposition dissolves the criticality question --- though not the limit theory --- of the sensitive regime into finite linear algebra, in the spirit of Section~\ref{sec:PF}.

\begin{proposition}[Two-sided finite-matrix criticality certificates]\label{prop:sandwich}
For $k\in\mathbb N$ partition $[0,a_*]$ into cells $I_1,\dots,I_k$ of mesh $a_*/k$, giving the finite partition $\{X_j\}$ of $X$ into $mk$ cells. Define the $mk\times mk$ nonnegative matrices
\[
\big(M^{(k)}_-\big)_{ij}=\inf_{x\in X_i,\ y\in X_j}\kappa(x,y)\,\nu(X_j),\qquad \big(M^{(k)}_+\big)_{ij}=\sup_{x\in X_i,\ y\in X_j}\kappa(x,y)\,\nu(X_j).
\]
Then:
\begin{enumerate}
\item (Sandwich.) $\rho\big(M^{(k)}_-\big)\le\rho(K)\le\rho\big(M^{(k)}_+\big)$ for every $k$.
\item (Convergence.) $\rho\big(M^{(k)}_\pm\big)\to\rho(K)$ as $k\to\infty$.
\item (Certificates.) $\rho(M^{(k)}_-)>1$ for some $k$ certifies supercriticality of the sensitive-regime cascade skeleton; $\rho(M^{(k)}_+)<1$ certifies subcriticality. Both certificates are verifiable in finite linear algebra, and one of the two obtains for every non-critical parameter point.
\end{enumerate}
\end{proposition}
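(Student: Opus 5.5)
The plan is to compare $K$ with step-function operators and then transfer between operators on $L^2(X,\nu)$ and finite matrices. Let $P_k$ be the conditional-expectation projection onto functions constant on the cells $X_j$. Define the kernels $\kappa^{(k)}_\pm(x,y)$ to be the cellwise infimum and supremum of $\kappa$ over $X_i\times X_j$ (for $x\in X_i$, $y\in X_j$), and let $K^{(k)}_\pm$ be the associated integral operators. Then $0\le K^{(k)}_-\le K\le K^{(k)}_+$ as positive operators, since the kernels are ordered pointwise. Monotonicity of the spectral radius for positive operators on a Banach lattice (or, elementarily, $\|(K^{(k)}_-)^n\one\|\le\|K^n\one\|\le\|(K^{(k)}_+)^n\one\|$ together with Gelfand's formula) gives $\rho(K^{(k)}_-)\le\rho(K)\le\rho(K^{(k)}_+)$.

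The key identification is $\rho(K^{(k)}_\pm)=\rho(M^{(k)}_\pm)$. A step kernel has range contained in the space of cell-constant functions, so every nonzero eigenvalue of $K^{(k)}_\pm$ is an eigenvalue of its restriction to that $mk$-dimensional space. In the basis of cell indicators, this restriction acts by $(K^{(k)}_\pm\one_{X_j})(x)=\kappa^{(k)}_\pm(X_i,X_j)\,\nu(X_j)$ for $x\in X_i$, which is exactly the matrix $M^{(k)}_\pm$. Since $K^{(k)}_\pm$ is finite rank, its spectrum is $\{0\}$ together with these eigenvalues, and item (1) follows.

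For item (2), uniform continuity of $\kappa$ on the compact set $X\times X$ gives $\|\kappa^{(k)}_+-\kappa^{(k)}_-\|_\infty=:\omega_k\to0$. The lower bound $\kappa\ge c_->0$ then turns this additive error into a multiplicative one: $\kappa^{(k)}_+\le(1+\omega_k/c_-)\,\kappa^{(k)}_-$, and likewise $\kappa^{(k)}_+\le(1+\omega_k/c_-)\,\kappa$. Monotonicity again yields
\[
\rho(M^{(k)}_+)\le(1+\omega_k/c_-)\,\rho(M^{(k)}_-)\le(1+\omega_k/c_-)\,\rho(K).
\]
Combined with the sandwich, both $\rho(M^{(k)}_\pm)$ are pinched to $\rho(K)$. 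This avoids any need for Krein--Rutman or continuity of the spectrum; one only needs $\nu(X)<\infty$, so that $\rho(K)<\infty$.

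Item (3) is then immediate. If $\rho(K)>1$, item (2) gives some $k$ with $\rho(M^{(k)}_-)>1$; if $\rho(K)<1$, it gives some $k$ with $\rho(M^{(k)}_+)<1$. In the other direction, the sandwich shows each certificate implies the corresponding inequality for $\rho(K)$. The main obstacle I anticipate is making precise the phrase ``supercriticality of the cascade skeleton''. This requires the criterion that the continuous-type branching process survives with positive probability iff $\rho(K)>1$. I would obtain the direction needed from the certificate by coupling: a finite-type process with mean matrix $M^{(k)}_-$, built by thinning offspring so that each child lands in its cell at the minimal rate, is stochastically dominated by the skeleton. Its survival when $\rho(M^{(k)}_-)>1$ then transfers up by the finite-type criterion used in Theorem~\ref{thm:reduction}. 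The subcritical side uses $\E[\text{generation-}n\text{ size}]\le\|(M^{(k)}_+)^n\|\to0$.
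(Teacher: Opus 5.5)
Your proofs of items (1) and (2) are the paper's. Both use the pointwise ordering $\kappa^{(k)}_-\le\kappa\le\kappa^{(k)}_+$ with monotonicity of the spectral radius of positive operators. Both identify $\rho(K^{(k)}_\pm)=\rho(M^{(k)}_\pm)$ because step operators have range in the cell-constant functions. Both use ellipticity to turn $\omega_k\to0$ into the multiplicative bound $M^{(k)}_+\le(1+\omega_k/c_-)M^{(k)}_-$. The spectral half of item (3) then follows as you say. Your subcritical survival argument is also sound: writing $|Z_n|$ for the generation-$n$ size, $\E[|Z_n|\mid\text{root at }x]=(K^n\one)(x)\le\big((M^{(k)}_+)^n\one\big)_i$ for $x\in X_i$.

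The step that fails as written is the supercritical survival transfer. The paper does not prove this; it cites the general-state-space extinction dichotomy under uniform ellipticity.

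\emph{Why the thinning does not give a Galton--Watson process.} Thin each child at $y$ of a parent at $x$ with retention probability $\kappa^{(k)}_-(x,y)/\kappa(x,y)$. Then the expected number of retained children in $X_j$ is $(M^{(k)}_-)_{ij}$ for every $x\in X_i$, but only this mean is cell-measurable. The law of the retained offspring vector, and the positions of the children inside their cells (which determine their own offspring laws), still depend on where $x$ lies in $X_i$; in the sensitive regime, that is the failure strength. So the cell-projected thinned process is not a multitype Galton--Watson process, and the finite-type criterion behind Theorem~\ref{thm:reduction} does not apply to it. A further thinning to a cell-measurable law does not fix this in general. A common stochastic minorant of the offspring laws indexed by $x\in X_i$ can have mean strictly below $(M^{(k)}_-)_{ij}$, so the certificate would no longer test the process you built.

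\emph{Repair.} Use only the mean structure, through a martingale. Let $\psi>0$ be the right Perron vector of $M^{(k)}_-$, set $\rho_-:=\rho(M^{(k)}_-)>1$, and let $c(v)$ be the cell of $v$. Define $W_n:=\rho_-^{-n}\sum_{v}\psi_{c(v)}$, summing over generation $n$ of the thinned process. This is a nonnegative martingale, because the conditional mean of $\sum_{u\text{ child of }v}\psi_{c(u)}$ equals $\rho_-\psi_{c(v)}$ wherever $v$ sits in its cell. Offspring counts are dominated by $\deg-1$ whatever the strength, so Lemma~\ref{lem:var} gives a uniform second-moment bound. Together with $\E|Z_n|\le C\rho_-^n$, this yields $\E W_{n+1}^2\le\E W_n^2+C\rho_-^{-n}$, so $W$ is bounded in $L^2$. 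Hence $\E W_\infty=\psi_{c(o)}>0$, and the thinned process, and therefore the skeleton, survives with positive probability. Alternatively, cite the dichotomy as the paper does.
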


\begin{proof}[Proof sketch]
(1) Let $\Pi_k$ be the conditional-expectation projection onto functions constant on each cell $X_i$. The step kernels $\kappa^{(k)}_\pm(x,y):=\inf/\sup_{X_i\times X_j}\kappa$ for $(x,y)\in X_i\times X_j$ satisfy $\kappa^{(k)}_-\le\kappa\le\kappa^{(k)}_+$ pointwise, hence for the associated positive operators $K^{(k)}_-\le K\le K^{(k)}_+$ in the sense of positivity-preserving domination: $0\le f$ implies $K^{(k)}_-f\le Kf\le K^{(k)}_+f$ pointwise, and by iteration $\|(K^{(k)}_-)^nf\|\le\|K^nf\|\le\|(K^{(k)}_+)^nf\|$ for $f\ge0$; Gelfand's formula applied on the cone (using that the spectral radius of a positive operator is attained on positive vectors) yields $\rho(K^{(k)}_-)\le\rho(K)\le\rho(K^{(k)}_+)$. The step operators act invariantly on cell-wise constant functions, on which they are the matrices $M^{(k)}_\pm$; conversely, since $\kappa^{(k)}_\pm(x,\cdot)$ depends on $x$ only through its cell, the range of $K^{(k)}_\pm$ consists of cell-wise constant functions, so $\rho(K^{(k)}_\pm)=\rho(M^{(k)}_\pm)$.

(2) By uniform continuity of $\kappa$ on the compact $X\times X$, $\omega_k:=\|\kappa^{(k)}_+-\kappa^{(k)}_-\|_\infty\to0$. Uniform ellipticity gives $\kappa^{(k)}_+\le(1+\omega_k/c_-)\,\kappa^{(k)}_-$ pointwise, whence by positivity and Gelfand
\[
\rho\big(M^{(k)}_+\big)\le\big(1+\omega_k/c_-\big)\,\rho\big(M^{(k)}_-\big),
\]
so the sandwich of width $\rho(M^{(k)}_+)-\rho(M^{(k)}_-)\le(\omega_k/c_-)\rho(M^{(k)}_+)\to0$ pinches on $\rho(K)$.

(3) is immediate from (1)--(2) and Proposition~\ref{prop:dichotomy}(2), the survival criterion for the continuous-type skeleton being $\rho(K)>1$ by the general-state-space extinction dichotomy under uniform ellipticity.
\end{proof}

\begin{remark}\label{rem:sandwich}
Note what Proposition~\ref{prop:sandwich} does and does not deliver. It computes the criticality surface of the sensitive regime to any prescribed accuracy with finite matrices, extending the theme of Section~\ref{sec:PF} that no infinite-dimensional spectral theory is needed for criticality. It does not deliver the limit theorems (the analogue of Theorems \ref{thm:32}, \ref{thm:crt} and \ref{thm:dimS} with continuous types), which genuinely require the general-state-space branching machinery; see open problem 2 below. It is also, of course, still conditional on the sensitive-regime analogue of Hypothesis~\ref{hyp:red} for its bearing on the SDE.
\end{remark}

\subsection{The honest open problems}
\begin{enumerate}
\item \textbf{Conjecture~\ref{conj:front} (the reduction at strong coupling).} Theorem~\ref{thm:front} proves the front-Markov reduction for $\kappa<1$; the strong-coupling regime $\kappa\ge1$, uniformity through the critical window (replacing the cluster-volume factor $n$ by the front width $\sqrt n$), and unbounded degrees remain open. This is the load-bearing step for the full saturated regime.
\item \textbf{The sensitive regime.} Outside saturation the type is the continuous failure strength $A\in\R$ (Proposition~\ref{prop:dichotomy}(2)); the mean object is an integral operator and criticality is its Perron eigenvalue, now computable by Proposition~\ref{prop:sandwich}. What remains open is the limit theory: the $3/2$ exponent should persist (count variance is still finite, Lemma~\ref{lem:var}), but the constant and the boundary dimension require the spatial CLT and Perron projection on a continuous type.
\item \textbf{Sharp moment hypotheses for the CRT limit.} Reduce the exponential-moment hypothesis of Theorem~\ref{thm:crt} to $g''(1)<\infty$; see Remark~\ref{rem:moment}.
\item \textbf{Finite-size scaling on $G_n$.} Quasi-stationary theory \cite{CMS13} for the cutoff $\xi=(1-\rho(M))^{-1}$ against the finite size $n$; the window near criticality.
\item \textbf{Beyond trees.} Non-tree unimodular limits, where short cycles do not vanish and the cluster is not a GW genealogy.
\item \textbf{Boundary fluctuations.} By Theorem~\ref{thm:dimS}, the dimension of the failed boundary is finite-dimensional; what is not is the fluctuation theory on $\partial S$: the multifractal spectrum of transmission-weighted flows (the Legendre transform $\varphi^*$ of the pressure of Definition~\ref{def:pressure}), and the harmonic measure of the cluster itself, where the dimension drop of \cite{LPP96} should reappear. Here the essential spectrum of $K_\infty$ and a Weyl-type decomposition, rather than compactness, are the right tools.
\item \textbf{The front profile.} Theorem~\ref{thm:front} controls the cluster law; it does not describe the front as a dynamical object. A travelling-wave / hydrodynamic limit for the active front --- speed, shape, and Gaussian (or F--KPP-type) fluctuations of the front position on the tree, in the spirit of branching Brownian motion --- is open, and is the natural continuum question behind the front-energy supermartingale of Remark~\ref{rem:frontenergy}. The dissipative regime $\kappa<1$, where the linearised front dynamics has a uniform spectral gap, is the place to start.
\end{enumerate}

\appendix

\section{Many-to-one and the annealed rate}\label{app:m2o}
For $T\sim\UGW(g)$ and a nonnegative edge functional $F$ depending on the endpoint types, the size-biased many-to-one (spinal) decomposition gives, for the population count itself,
\[
\E\big[|\partial B_n|\big]=\mu\,m_*^{n-1}\asymp m_*^n,
\]
and, weighting each ray by its product of edge factors and conditioning on the i.i.d.\ types along the spine,
\[
\E\Big[\sum_{|v|=n}\prod_{e\in o\to v}F(e)\Big]=m_*^n\,\E\Big[\prod_{k=1}^nF(d_{k-1},d_k)\Big]=m_*^n\,q^\top A_F^n\one,\qquad (A_F)_{d,d'}=q(d')F(d,d'),
\]
which is Proposition~\ref{prop:m2o}. (The scalar expression $(m_*\E_{q\otimes q}[F])^n$ drops the type-sharing between consecutive edges; it is the $n=1$ marginal raised to the $n$-th power, valid only when $q$ is a left eigenvector of $A_F$.) With $F=p$, iterating the type transfer identifies the per-generation growth of the expected type-weighted population as the Perron root $\rho(M)$ of \eqref{eq:M}. Convergence of $\sum_nm_*^n\,q^\top A_p^n\one\asymp\sum_n\rho(M)^n$ (the expected total population) holds iff $\rho(M)<1$, recovering the subcritical criterion and the resolvent $R=(I-M)^{-1}$ of Theorem~\ref{thm:stab}. The second-moment many-to-two, whose per-generation transfer involves both $A_{F^2}$ and the pair-spine factor $m_*(m_*-1)$ acting on $A_F\otimes A_F$, yields the finite offspring variance of Lemma~\ref{lem:var} under $\E[\deg^2]<\infty$.

\section{Log-concavity of the first-passage probability}\label{app:logconc}
With $h=\theta-m>0$, $p=W_\theta/(W_\theta+W_{\mathrm{safe}})$, $W_\theta=\int_0^he^{-\alpha y^2/\mu}\,dy$ and $W_{\mathrm{safe}}$ constant in $h$, write $A=W_\theta$, $c=W_{\mathrm{safe}}$, $A'=e^{-\alpha h^2/\mu}>0$, $A''=-\frac{2\alpha h}{\mu}e^{-\alpha h^2/\mu}<0$. Then
\[
\frac{d}{dh}\log p=\frac{A'}{A}-\frac{A'}{A+c}=\frac{A'c}{A(A+c)}>0,
\]
so $p$ is increasing in $h$, and
\[
\frac{d^2}{dh^2}\log p=A''\frac{c}{A(A+c)}-(A')^2\frac{2Ac+c^2}{A^2(A+c)^2}<0,
\]
both terms being negative ($A''<0$ and the second term manifestly negative). Hence $\log p$ is concave, i.e.\ $p$ is log-concave in $h$. Consequence, used with the direction of the implication kept straight: superlevel sets $\{p\ge c'\}$ are convex, so $\mathcal M_+=\{\rho>1\}$ (a superlevel set of $p$) is convex; the subcritical $\mathcal M_-$ is its non-convex complement, retracted to a point by the coupling homotopy (Theorem~\ref{thm:stab}). Log-concavity does not make sublevel sets convex, and hence says nothing convex about $\mathcal M_-$.

\section{The Kesten--Stigum reduction: finite-type version and its scope}\label{app:KS}
For a finite-type irreducible critical offspring matrix $M$ with $\sum_{i,j}M_{ij}\log M_{ij}<\infty$ (automatic here), the Kesten--Stigum theorem \cite{KS66} gives $W_n=\langle\phi,Z_n\rangle\rho^{-n}\to W_\infty$ a.s., with $W_\infty=0$ at criticality, and the total progeny matches a single-type critical process with variance computed from the second-moment matrix; the Otter--Dwass formula then yields $\Prob(|S|=n)\sim Cn^{-3/2}$. This is exactly the content used in Theorem~\ref{thm:32}, and it is legitimate here because the saturated regime makes the type space finite (Proposition~\ref{prop:dichotomy}(1)). It is not legitimate in the sensitive regime, where the type is $D\times\R$ and one must pass to the general-state-space analogue (mean semigroup on $L^2(\R)$, Perron projection, spatial CLT); in that setting this computation survives only as the finite-type illustration. We have stated Theorem~\ref{thm:32} only in the regime where the finite-type reduction is valid. In the supercritical regime, the $L^2$ (rather than merely a.s.) convergence of $W_n$ under finite second moments, together with the positivity $\{W_\infty>0\}=\{\text{survival}\}$ under the $N\log N$ condition, is the input to the energy estimate of Theorem~\ref{thm:dimS}.

\section{Verification of the hypotheses of Miermont's invariance principle}\label{app:miermont}
We verify the hypotheses of \cite{Mie08} for the critical cluster of Theorem~\ref{thm:crt}, and record the scale constant. The multitype offspring law is: a type-$d$ individual begets, given its $\UGW$ offspring count $\nu\sim\hat g$ (the size-biased forward-degree law), a vector of children whose types are i.i.d.\ $q$ thinned independently by the $\mathrm{Bernoulli}(p_{d,d'})$ transmissions; the resulting offspring vector $N^{(d)}=(N^{(d)}_1,\dots,N^{(d)}_m)$ has mean vector $(M_{d,d'})_{d'}$ and covariance matrix
\[
Q^{(d)}_{d',d''}=\delta_{d',d''}\,m_*\,q(d')p_{d,d'}\big(1-q(d')p_{d,d'}\cdot\one_{d'=d''}\big)+\big(\E[\nu(\nu-1)]-m_*^2\big)\,q(d')p_{d,d'}\,q(d'')p_{d,d''},
\]
finite under $g''(1)<\infty$; strictly speaking Miermont's hypotheses (H) require, beyond criticality ($\rho(M)=1$), irreducibility and aperiodicity (our standing assumptions, Theorem~\ref{thm:32}), the existence of small exponential moments of $\|N^{(d)}\|_1$ for each $d$, which holds iff $\E[e^{t\nu}]<\infty$ for some $t>0$ since $\|N^{(d)}\|_1\le\nu$ stochastically. This is the hypothesis flagged in Theorem~\ref{thm:crt} and Remark~\ref{rem:moment}. Under (H), \cite[Thm.~1]{Mie08} yields the GHP convergence of the conditioned tree rescaled by $n^{-1/2}$ to a constant multiple of the CRT; the constant is
\[
\Sigma=\left(\frac{\sum_d\pi_d\psi_d^{-1}\,\big(\psi^\top Q^{(d)}\psi\big)}{2\sum_d\pi_d\psi_d}\right)^{1/2}\cdot\Big(\sum_d\pi_d\psi_d\Big)^{1/2}
\]
in Miermont's normalisation of the Perron data $(\pi,\psi)$, $\pi^\top M=\pi^\top$, $M\psi=\psi$, $\pi^\top\one=\pi^\top\psi=1$; in the single-type case this collapses to $\Sigma=\sigma_{\mathrm{off}}/2$, matching \cite{Ald93}. (The precise algebraic form of $\Sigma$ should be transcribed against the normalisation conventions of \cite{Mie08} at proof stage; only its positivity and finiteness are used in Theorem~\ref{thm:crt}.)

\end{document}